\numberwithin{equation}{section}
\theoremstyle{definition}
\newtheorem{defn}{Definition}[section]
\newtheorem{rem}[defn]{Remark}
\theoremstyle{plain}
\newtheorem{lemm}[defn]{Lemma}
\newtheorem{prop}[defn]{Proposition}
\newtheorem{thm}[defn]{Theorem}
\newtheorem{cor}[defn]{Corollary}
\title{Jackson integral representation for {a multiple} $q$-hypergeometric series and an extension of the $q$-Riemann--Papperitz system}
\author{
	Takahiko Nobukawa
	\footnote{{Department of Education, Kogakkan University, Kodakushimoto-cho, Ise 516-8555, Japan.\\
			E-mail: t-nobukawa@kogakkan-u.ac.jp}
		\\\ \\
		keywords:
		$q$-hypergeometric series, Jackson integral, multivariable $q$-difference system, Bailey's formula.\\
		MSC2020: 33D60, 33D70, 39A13.}
}
\date{}
\begin{document}
	\maketitle
	\begin{abstract}
		We give {a Jackson integral} representation for Kajihara's $q$-hypergeometric series $W^{M,2}$.
		We construct a $q$-difference system that corresponds to this integral.
		This system is an extension of the variant of $q$-hypergeometric equation of degree three, defined by Hatano--Matsunawa--Sato--Takemura.
		We show that this system includes the $q$-Appell--Lauricella system as a degeneration.
	\end{abstract}
	\section{Introduction}
	The Gauss hypergeometric function
	\begin{align}
		{}_2F_1\left(\begin{array}{c}
			\alpha,\beta\\
			\gamma
		\end{array};x\right)=1+\frac{\alpha\cdot\beta}{\gamma\cdot 1}x+\frac{\alpha(\alpha+1)\cdot\beta(\beta+1)}{\gamma(\gamma+1)\cdot 1\cdot 2}x^2+\cdots ,
	\end{align}
	has an integral representation:
	\begin{align}
		{}_2F_1\left(\begin{array}{c}
			\alpha,\beta\\
			\gamma
		\end{array};x\right)=\frac{\Gamma(\gamma)}{\Gamma(\alpha)\Gamma(\gamma-\alpha)}\int_0^1 t^{\alpha-1}(1-t)^{\gamma-\alpha-1}(1-xt)^{-\beta}dt.
	\end{align}
	This provides many global properties for the Gauss hypergeometric function.
	In the theory of hypergeometric functions, to give an integral representation is an important and fundamental problem.
	
	Kajihara's very-well-poised $q$-hypergeomeric series $W^{M,N}$ \cite{Kaji}, given in \eqref{defkaji}, is a multivariable $q$-hypergeometric function.
	This series is an extension of Holman's hypergeometric series \cite{holman,holman2} and Milne's hypergeometric series \cite{milne,Milne1988}.
	An elliptic analog of $W^{M,N}$ is also studied in \cite{KN}.
	The series $W^{M,N}$ has summation formulas and transformation formulas, and these formulas are applied in some integrable systems.
	See also \cite{kaji2018} for this series and its variations.
	On the other hand, the $q$-difference equation satisfied by Kajihara's $q$-hypergeometric series {has not been} obtained.
	
	In this paper, we give an integral representation for Kajihara's very well-poised $q$-hypergeometric series $W^{M,2}$ \cite{Kaji}.
	We also construct a $q$-difference system associated with this integral.
	Main results of this paper are Theorem \ref{thmkajiint} and \ref{thmmultisystem}.
	Theorem \ref{thmkajiint} gives a transformation formula between the series $W^{M,2}$ and the following Jackson integral:
	\begin{align}
		\int_{q/a_i}^{q/a_j}\prod_{k=1}^{M+3}\frac{(a_{k}t)_\infty}{(b_{k}t)_\infty}d_qt\ \ (a_1\cdots a_{M+3}=q^2b_1\cdots b_{M+3}).\label{introintegral}
	\end{align}  
	Notations in \eqref{introintegral} are summarized in section \ref{secpre}.
	{For $M=1$, our integral representation reduces to Bailey's formula, which express an ${}_8W_7$-series as a sum of two ${}_4\varphi_3$-series.}
	Theorem \ref{thmmultisystem} gives a multivariable $q$-difference system associated with the integral \eqref{introintegral}.
	This system is an extension of {a} variant of {the} $q$-hypergeometric equation of degree three $\mathcal{H}_3$ \cite{HMST}.
	It was found by Fujii and the author \cite{FN} that this variant can be regarded as a $q$-analog of the Riemann--Papperitz system, hence we call the system in Theorem \ref{thmmultisystem} $q$-$RP^M$.
	The Riemann--Papperitz system is a second order Fuchsian differential equation with three singularities $\{t_1,t_2,t_3\}$ on the Riemann sphere $\mathbb{P}^1=\mathbb{C}\cup\{\infty\}$, and this has solutions of the following forms:
	\begin{align}
		\label{RiePapint}&\mbox{(gauge factor)}\times\int_C (1-t_1t)^{\nu_1}(1-t_2t)^{\nu_2}(1-t_3t)^{\nu_3}(1-xt)^{\nu_4}dt\ \ (\nu_1+\nu_2+\nu_3+\nu_4=-2),\\
		\label{RiePapser}&\mbox{(gauge factor)}\times {}_2F_1\left(\begin{array}{c}
			\mu_1,\mu_2\\
			\mu_3
		\end{array};\frac{x-t_1}{x-t_3}\frac{t_2-t_3}{t_1-t_3}\right).
	\end{align}
	They \cite{FN} also gave integral solutions and series solutions, which are $q$-analogs of \eqref{RiePapint} and \eqref{RiePapser} respectively, for the variant $\mathcal{H}_3$.
	For integral solutions, see also \cite{AT}.
	Our result (Theorem \ref{thmmultisystem}) includes integral solutions for the variant $\mathcal{H}_3$ as a special case $M=1$.
	

	
	The contents of this paper are as follows.
	In section \ref{secpre}, we give notations and properties of  Kajihara's $q$-hypergeometric series $W^{M,N}$.
	In section \ref{secintsys}, we first derive an integral representation for the series $W^{M,2}$.
	We also construct a multivariable $q$-difference system $q$-$RP^M$ associated with this integral.
	In section \ref{secdegene}, we show that the system $q$-$RP^M$ degenerates to the $q$-Appell--Lauricella system.
	We summarize the results and discuss related problems in section \ref{secsum}.
	
	In this paper, we always assume that none of the denominators vanishes.

	\section{Preliminaries}\label{secpre}
	Throughout this paper, we fix $q\in\mathbb{C}$ with $0<|q|<1$, and we use the following notations:
	\begin{align}
		&(a)_\infty=\prod_{i=0}^\infty(1-aq^i),\ (a)_l=\frac{(a)_\infty}{(aq^l)_\infty},\ (a_1,\ldots,a_r)_l=(a_1)_l\cdots (a_r)_l,\\
		&\theta(x)=(x,q/x)_\infty,\\
		&\int_0^\tau f(t)d_qt=(1-q)\sum_{n=0}^\infty f(\tau q^n)\tau q^n,\ 
		\int_0^{\tau\infty}f(t)d_qt=(1-q)\sum_{n=-\infty}^\infty f(\tau q^n)\tau q^n,\\
		&\int_{\tau_1}^{\tau_2}f(t)d_qt=\int_{0}^{\tau_2}f(t)d_qt-\int_{0}^{\tau_1}f(t)d_qt,\\
		&T_x f(x)=f(qx),\\
		&{}_r\varphi_s\left(\begin{array}{c}
			a_1,a_2,\ldots,a_r\\b_1,b_2,\ldots,b_s
		\end{array};z\right)=\sum_{l=0}^\infty \frac{(a_1,a_2,\ldots,a_r)_l}{(q,b_1,b_2,\ldots,b_s)_l}\left[(-1)^l q^{\binom{l}{2}}\right]^{1+s-r} z^l.
	\end{align}
	\begin{defn}[\cite{GR}]
		The very well-poised $q$-hypergeometric series ${}_{r+1}W_r$ is defined as follows:
		\begin{align}
			\label{defvwp}{}_{r+1}W_r(a_1;a_4,a_5,\ldots,a_{r+1};z)=\sum_{l=0}^\infty\frac{1-a_1 q^{2l}}{1-a_1}\frac{(a_1,a_4,a_5,\ldots,a_{r+1})_l}{(q,qa_1/a_4,qa_1/a_5,\ldots,qa_1/a_{r+1})_l}z^l.
		\end{align}
	\end{defn}
	For properties of ${}_{r+1}W_r$, see Gasper--Rahman \cite{GR}.
	Kajihara's $q$-hypergeometric series $W^{M,N}$ \cite{Kaji} is a multiple extension of ${}_{r+1}W_r$ {defined} as follows:
	\begin{defn}[Kajihara's $q$-hypergeometric series \cite{Kaji,kaji2018,KN}]
		The $q$-hypergeometric series $W^{M,N}$ is defined as follows: 
		\begin{align}
			\notag&W^{M,N}(\{x_i\}_{1\leq i\leq M};a;\{u_j\}_{1\leq j\leq M+N};\{v_k\}_{1\leq k\leq N};z)\\
			\label{defkaji}&=\sum_{l\in\mathbb{Z}_{\geq0}^M}z^{|l|}\frac{\Delta(x q^l)}{\Delta(x)}\prod_{i=1}^M\left(\frac{1-ax_iq^{|l|+l_i}}{1-ax_i}\frac{(ax_i)_{|l|}\prod_{j=1}^{M+N}(x_iu_j)_{l_i}}{\prod_{j=1}^M(qx_i/x_j)_{l_i}\prod_{k=1}^N(aqx_i/v_k)_{l_i}}\right)\frac{\prod_{j=1}^N(v_k)_{|l|}}{\prod_{j=1}^{M+N}(aq/u_j)_{|l|}}
		\end{align}
		where $|l|=l_1+l_2+\cdots+l_M$, $x q^l=\{x_i q^{l_i}\}_{1\leq i\leq M}$ and $\Delta(x)$ is the Vandermonde determinant of $x$:
		\begin{align}
			\Delta(x)=\prod_{1\leq i<j\leq M}(x_i-x_j).
		\end{align}
		The function $W^{M,N}$ is Kajihara's very well-poised $q$-hypergeometric series.
	\end{defn}
	\begin{rem}
		In \cite{Kaji,kaji2018,KN}, the series $W^{M,N}$ is originally given in the following form:
		\begin{align}
						\notag&W^{M,N}\left(\begin{array}{c}
								\{a_i\}_{1\leq i\leq M}\\
								\{x_i\}_{1\leq i\leq M}
							\end{array}\bigg| \, s;
							\{u_k\}_{1\leq k\leq N};
							\{v_k\}_{1\leq k\leq N}
						;z\right)\\
						\notag&=\sum_{l\in\mathbb{Z}_{\geq0}^M}z^{|l|}\frac{\Delta(x q^l)}{\Delta(x)}\prod_{i=1}^M \frac{1-q^{|l|+l_i}sx_i}{1-sx_i}\\
						&\phantom{=}\times \prod_{j=1}^M\left[\frac{(sx_j)_{|l|}}{(qsx_j/a_j)_{|l|}}\prod_{i=1}^M\frac{(a_jx_i/x_j)_{l_i}}{(qx_i/x_j)_{l_i}}\right]\cdot\prod_{k=1}^N \left[\frac{(v_k)_{|l|}}{(qs/u_k)_{|l|}}\prod_{i=1}^M\frac{(x_iu_k)_{l_i}}{(qsx_i/v_k)_{l_i}}\right].
		\end{align}	
		Putting $a_i=x_iu_{N+i}$ and $s=a$, this series becomes \eqref{defkaji}.
		The symmetry for $u_1,\ldots,u_{M+N}$ is obvious in the notation \eqref{defkaji}.
		Hence we use the notation \eqref{defkaji} in this paper. 
	\end{rem}
	\begin{rem}
		{In this paper we will discuss the special case $W^{M,2}$.
		This series first appeared in Milne's work \cite{Milne1988} in the context of $U(n)$ generalization for several summation formula of $q$-hypergeometric series.}
	\end{rem}
	
	{For non-terminating case, the series \eqref{defkaji} converges if $|z|<1$.
	This can be checked by the multiple power series ratio test (see Appendix \ref{appA}).}
	
	The series $W^{M,N}$ has an transformation formula.
	For the proof, see \cite{Kaji,KN}.
	\begin{prop}[Kajihara's transformation formula \cite{Kaji,kaji2018,KN}]
		For any $n\in\mathbb{Z}_{\geq 0}$, we have
		\begin{align}
			\notag&W^{M,N+2}(\{x_i\}_{1\leq i\leq M};a;\{b_j\}_{1\leq j\leq M+N+2};\{c/y_k\}_{1\leq k\leq N},\mu c q^n,q^{-n};q)\\
			\notag&=\prod_{i=1}^M \frac{(aqx_i)_n}{(\mu c/(a x_i))_n}\prod_{j=1}^{M+N+2}\frac{(\mu c b_j/a)_n}{(a q/b_j)_n}\prod_{k=1}^N\frac{(c/y_k)_n}{(\mu q y_k)_n}
			\\
			\label{Kajitrans}&\phantom{=}\times W^{N,M+2}(\{y_k\}_{1\leq k\leq N};\mu;\{aq/(cb_j)\}_{1\leq j\leq M+N+2};\{\mu c/(a x_i)\}_{1\leq i\leq M},\mu cq^n,q^{-n};q)
		\end{align}
		where {$\mu=a^{N+2} q^{N+1}y_1 y_2 \cdots y_N/(c^{N+1} b_1 b_2 \cdots b_{M+N+2} x_1 x_2 \cdots x_M )$.}
	\end{prop}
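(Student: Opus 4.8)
The plan is to exploit the manifest interchange symmetry of \eqref{Kajitrans}: swapping the two blocks of variables $x\leftrightarrow y$ together with $M\leftrightarrow N$, $a\leftrightarrow\mu$, and $b_j\leftrightarrow aq/(cb_j)$ sends the left-hand series to the right-hand one, so the identity is essentially self-inverse and it suffices to establish it in one direction. I would first absorb the explicit prefactor into the right-hand member, so that both sides become terminating multiple very well-poised sums truncated by the factor $(q^{-n})_{|l|}$ (which forces $|l|\le n$), and then argue by induction on the dimension $N$, taking $N=0$ as the base case.

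For $N=0$ the right-hand side collapses to the single $l=0$ term of the empty sum over $\mathbb{Z}_{\ge 0}^0$, so \eqref{Kajitrans} reduces to the closed-form evaluation
\begin{align*}
W^{M,2}(\{x_i\};a;\{b_j\}_{1\le j\le M+2};\mu cq^n,q^{-n};q)=\prod_{i=1}^M\frac{(aqx_i)_n}{(\mu c/(ax_i))_n}\prod_{j=1}^{M+2}\frac{(\mu cb_j/a)_n}{(aq/b_j)_n},
\end{align*}
which is Milne's $U(M)$-generalization of Jackson's terminating, balanced, very well-poised ${}_8\varphi_7$ summation \cite{Milne1988}. Here the balancing relation $\mu=a^{2}q/(c\,b_1\cdots b_{M+2}\,x_1\cdots x_M)$ coming from the hypothesis on $\mu$ is, for $M=1$, exactly the condition $(ax_1)^2q^{\,n+1}=(x_1b_1)(x_1b_2)(x_1b_3)(\mu cq^n)$ under which the ${}_8W_7$ in \eqref{defvwp} is summable by Jackson's theorem \cite{GR}; I would therefore prove the displayed formula either by a further induction on $M$, peeling off the variable $x_M$ and reducing the one-variable residual sum to classical Jackson, or simply invoke the multiple summation as known.

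For the inductive step I would pass from $N-1$ to $N$ by treating the new pair of parameters $(c/y_N,\,b_{M+N+2})$ as a single free parameter: expanding the $q$-shifted factorials that carry it by an appropriate connection/summation formula, interchanging the order of the two multiple summations, and recognizing the inner sum as a multiple very well-poised series that is summable by the base-case evaluation of the previous paragraph. What survives, after a shift of the summation index, is the $N$-fold series on the right with the rescaled $\mu$ prescribed by the statement. The balancing hypothesis on $\mu$ is precisely what keeps every intermediate series very well-poised, so that the re-summation applies at each stage.

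The main obstacle will be the bookkeeping of the Vandermonde ratio $\Delta(xq^l)/\Delta(x)$ together with the cross terms $\prod_{i,j}(qx_i/x_j)_{l_i}$ and $\prod_{i,j}(x_ib_j)_{l_i}$ under the interchange of summations: one must check that these reorganize into the correct Vandermonde and cross terms for the dual index set with no spurious residues, which is exactly where the very well-poised and balancing conditions are used in full. A cleaner route that bypasses part of this is to encode both series through an explicit $A_{n-1}$-type matrix inverse in the spirit of Milne and Krattenthaler; the transformation then falls out as the assertion that two explicit triangular matrices are mutually inverse, and the summability of the inner sum is replaced by a single inversion lemma. I expect verifying this inner summability at each inductive stage to be the delicate point, and the matrix-inverse formulation of \cite{Kaji,KN} to be the most economical way to organize it.
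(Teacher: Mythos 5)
The paper does not actually prove this proposition: it is quoted verbatim from Kajihara and Kajihara--Noumi, with the proof deferred to \cite{Kaji,KN}. There Kajihara derives the $W^{M,N}\leftrightarrow W^{N,M}$ duality from a multiple Euler transformation formula obtained by expanding a single kernel function in two different ways (via Milne's $A_{n-1}$ $q$-binomial expansions / a Cauchy-type identity), rather than by induction on $N$. So your attempt cannot be measured against an argument in this paper, only against the cited sources.

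On its own terms, your sketch gets the easy parts right and leaves the hard part unproved. The self-duality observation ($x\leftrightarrow y$, $M\leftrightarrow N$, $a\leftrightarrow\mu$, $b_j\leftrightarrow aq/(cb_j)$) is correct, and your base case $N=0$ is correctly identified: the right-hand $W^{0,M+2}$ is the empty sum equal to $1$, the balancing condition $\mu=a^2q/(c\,b_1\cdots b_{M+2}x_1\cdots x_M)$ specializes for $M=1$ to exactly Jackson's condition, and the resulting evaluation is Milne's $U(M)$ Jackson sum. But the inductive step from $N-1$ to $N$ is stated only as a plan --- ``expanding the $q$-shifted factorials that carry $(c/y_N,b_{M+N+2})$ by an appropriate connection/summation formula, interchanging summations, and recognizing the inner sum as summable'' --- without exhibiting the formula, the interchange, or the verification that the Vandermonde ratio $\Delta(xq^l)/\Delta(x)$ and the cross factors $\prod_{i,j}(qx_i/x_j)_{l_i}$ reassemble into the dual structure. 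That reassembly is the entire content of the theorem (as you yourself concede), and it is precisely what the kernel-function argument of \cite{Kaji,KN} is designed to handle; asserting that a matrix-inversion formulation ``would be the most economical way to organize it'' is an appeal to the literature, not a proof. As it stands the proposal is an outline with a genuine gap at its central step.
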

	\begin{cor}[\cite{Kaji}]
		We have
		\begin{align}
			\notag&W^{M,3}(\{x_i\}_{1\leq i\leq M};a;\{b_j\}_{1\leq j\leq M+3};c,\mu c q^k,q^{-k};q)\\
			\notag&=\prod_{i=1}^M \frac{(aqx_i)_k}{(\mu c/(a x_i))_k}\prod_{j=1}^{M+3}\frac{(\mu c b_j/a)_k}{(a q/b_j)_k}\cdot \frac{(c)_k}{(\mu q )_k}
			\\
			\label{transWn3}&\phantom{=}\times {}_{2M+8}W_{2M+7}(\mu;\{\mu c/(ax_i)\}_{1\leq i\leq M},\{aq/(cb_j)\}_{1\leq j\leq M+3},\mu c q^k,q^{-k};q)
		\end{align}
		where {$\mu=a^{3} q^{2}/(c^{2} b_1 b_2 \cdots b_{M+3} x_1 x_2 \cdots x_M )$} and $k\in\mathbb{Z}_{\geq0}$.
	\end{cor}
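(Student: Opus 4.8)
The plan is to obtain the Corollary as the single specialization $N=1$, $y_1=1$ of Kajihara's transformation formula \eqref{Kajitrans}, followed by the reduction of the resulting $W^{1,M+2}$ on the right-hand side to a classical very well-poised series.

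First I would put $N=1$ in \eqref{Kajitrans}. The left-hand side $W^{M,N+2}$ becomes $W^{M,3}$, and its last block of lower parameters $\{c/y_k\}_{1\le k\le N},\mu c q^n,q^{-n}$ reduces to $c/y_1,\mu c q^n,q^{-n}$. Choosing $y_1=1$ turns this into $c,\mu c q^n,q^{-n}$, matching the left-hand side of the Corollary after renaming $n$ to $k$. On the prefactor, the only $N$-dependent block $\prod_{k=1}^N (c/y_k)_n/(\mu q y_k)_n$ collapses, at $y_1=1$, to $(c)_k/(\mu q)_k$, which is exactly the scalar factor appearing in \eqref{transWn3}. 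A direct substitution into $\mu=a^{N+2}q^{N+1}y_1\cdots y_N/(c^{N+1}b_1\cdots b_{M+N+2}x_1\cdots x_M)$ with $N=1$, $y_1=1$ gives $\mu=a^3 q^2/(c^2 b_1\cdots b_{M+3}x_1\cdots x_M)$, in agreement with the stated value.

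It remains to identify the right-hand side $W^{1,M+2}(\{1\};\mu;\{aq/(cb_j)\}_{1\le j\le M+3};\{\mu c/(ax_i)\}_{1\le i\le M},\mu c q^k,q^{-k};q)$ with the single very well-poised series ${}_{2M+8}W_{2M+7}$. This is the step that needs the most care. I would specialize the defining sum \eqref{defkaji} to one summation variable: the Vandermonde ratio $\Delta(xq^l)/\Delta(x)$ becomes $1$, the product $\prod_{j}(qx_i/x_j)_{l_i}$ reduces to the single factor $(q)_{l_1}$, and the whole multiple sum collapses to an ordinary one-dimensional series. Writing $a_1=\mu y_1=\mu$, one checks that the upper parameters split into the $2(M+2)+1=2M+5$ quantities $\{y_1\cdot aq/(cb_j)\}_j=\{aq/(cb_j)\}_j$ and $\{\mu c/(ax_i)\}_i,\mu c q^k,q^{-k}$, while each corresponding lower parameter is exactly $qa_1/(\,\cdot\,)$; this is precisely the very well-poised pairing in \eqref{defvwp}. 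Counting $2M+5$ free upper parameters forces $r=2M+7$, so the series is ${}_{2M+8}W_{2M+7}$ with first argument $\mu$ and the listed arguments, completing the identification.

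The substitution and bookkeeping are routine; the only genuine point requiring verification is the last paragraph, namely that the companion (lower) parameters of $W^{1,M+2}$ align with the $qa_1/a_j$ pattern of \eqref{defvwp}, so that the collapsed one-variable series is literally very well poised rather than merely balanced. Once that pairing is confirmed, the Corollary follows immediately.
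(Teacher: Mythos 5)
Your proposal is correct and follows exactly the paper's route: the paper's proof is the single sentence ``put $N=1$ in \eqref{Kajitrans},'' and you have simply made explicit the two details it leaves implicit, namely the normalization $y_1=1$ and the collapse of $W^{1,M+2}$ at $x_1=1$ to the very well-poised series ${}_{2M+8}W_{2M+7}$, both of which you verify correctly (the parameter count $2M+5$ and the $qa_1/(\cdot)$ pairing check out against \eqref{defkaji} and \eqref{defvwp}).
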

	\begin{proof}
		We get the desired equation by putting $N=1$ in Kajihara's  transformation formula \eqref{Kajitrans}.
	\end{proof}
	\begin{rem}
		When $M=1$, the formula \eqref{transWn3} reduces {to} a transformation formula of terminating very well-poised $q$-hypergeometric series ${}_{10}W_9$ (see Gasper--Rahman \cite{GR} and references therein).
	\end{rem}

\section{Integral representation for $W^{M,2}$ and corresponding $q$-difference system}\label{secintsys}

In this section, we derive  an integral representation for Kajihara's $q$-hypergeometric series $W^{M,2}$ by using \eqref{transWn3}.
We also construct a $q$-difference system associated with this integral.

\subsection{Integral representation for $W^{M,2}$}
In this subsection, we give an integral representation for $W^{M,2}$.
This formula is an extension of Bailey's transformation formula \cite[(2.10.{19})]{GR}:
	\begin{align}
			\notag&\int_{a}^{b}\frac{(qt/a,qt/b,ct,dt)_{\infty}}{(et,ft,gt,ht)_{\infty}}d_{q}t\\
			=&b(1-q)\frac{(q,bq/a,a/b,cd/(eh),cd/(fh),cd/(gh),bc,bd)_{\infty}}{(ae,af,ag,be,bf,bg,bh,bcd/h)_{\infty}}
			\times {}_{8}W_{7}\left(\frac{bcd}{hq};be,bf,bg,\frac{c}{h},\frac{d}{h};ah\right).\label{inttoser}
		\end{align}
	where $cd=abefgh$.
	{The formula \eqref{inttoser} is also known in the following form \cite[(2.10.10)]{GR}:
	\begin{align}
		&{}_8W_7\left(a;b,c,d,e,f;\frac{a^2q^2}{bcdef}\right)
		\notag\\
		&=\frac{(aq,aq/(de),aq/(df),aq/(ef))_\infty}{(aq/d,aq/e,aq/f,aq/(def))_\infty}{}_4\varphi_3\left(\begin{array}{c}
			aq/(bc),d,e,f\\aq/b,aq/e,def/a
		\end{array};q\right)
		\notag\\
		&+\frac{(aq,aq/(bc),d,e,f,a^2q^2/(bdef),a^2q^2/(cdef))_\infty}{(aq/b,aq/c,aq/d,aq/e,aq/f,a^2q^2/(bcdef),def/(aq))_\infty}
		\notag\\
		&\phantom{+ }\times{}_4\varphi_3\left(\begin{array}{c}
			aq/(de),aq/(df),aq/(ef),a^2q^2/(bcdef)\\ a^2q^2/(bdef),a^2q^2/(cdef),aq^2/(def)
		\end{array};q\right).
	\end{align}}
\begin{thm}\label{thmkajiint}
	{We put $d=a^2q^2/(c_1c_2b_1b_2\cdots b_{M+2}x_1x_2\cdots x_M)$.}
	We suppose $|d|<1$.
	We have
	\begin{align}
		\notag&W^{M,2}(\{x_i\}_{1\leq i\leq M};a;\{b_j\}_{1\leq j\leq M+2};c_1,c_2;d)	\\	
		\notag&=\prod_{i=1}^M \frac{(aqx_i)_\infty}{(a qx_i/c_1)_\infty}\prod_{j=1}^{M+2}\frac{(aq/( c_1 b_j))_\infty}{(a q/b_j)_\infty}\cdot \frac{(c_1d,c_2)_\infty}{(d,c_2/c_1)_\infty}\\
		\notag&\phantom{=}\times{}_{M+3}\varphi_{M+2}\left(\begin{array}{c}
			c_1,\{aq/(c_2 b_j)\}_{1\leq j\leq M+2}\\
			qc_1/c_2,c_1d,\{aqx_i/c_2\}_{1\leq i\leq M}
		\end{array};q\right)\\
		&+\mathrm{idem}(c_1;c_2),\label{newWM2}
	\end{align}
	{where the symbol ``$\mathrm{idem}(c_1;c_2)$'' after an expression means that the expression is repeated with $c_1$ and $c_2$ interchanged.}
	{Equivalently, we have
	\begin{align}
		\notag&\int_{q/a_{M+2}}^{q/a_{M+3}}\prod_{k=1}^{M+3}\frac{(a_k t )_\infty}{(b_k t )_\infty}d_qt\\
		\notag&=
		\prod_{i=1}^M\frac{(qa_i/a_{M+2},qa_i/a_{M+3})_\infty}{(q^2a_ib_{M+3}/(a_{M+2}a_{M+3}))_\infty}\prod_{j=1}^{M+2}{(q^2b_jb_{M+3}/(a_{M+2}a_{M+3}))_\infty}\prod_{j=1}^{M+3}\frac{1}{(qb_j/a_{M+2},qb_j/a_{M+3})_\infty}\\
		\notag&\phantom{=}\times {(q,a_{M+1}/b_{M+3},a_{M+2}/a_{M+3},a_{M+3}/a_{M+2})_\infty}\frac{(1-q)q}{a_{M+3}-a_{M+2}}\\
		&\phantom{=}\times W^{M,2}\left(\{a_i\}_{1\leq i\leq M};\frac{qb_{M+3}}{a_{M+2}a_{M+3}};\left\{\frac{1}{b_j}\right\}_{1\leq j\leq M+2};\frac{qb_{M+3}}{a_{M+2}},\frac{qb_{M+3}}{a_{M+3}};\frac{a_{M+1}}{b_{M+3}}\right),\label{inttoWpart}
		%
	\end{align}
	where $a_1\cdots a_{M+3}=q^2 b_1\cdots b_{M+3}$ and $|a_{M+1}/b_{M+3}|<1$.}
\end{thm}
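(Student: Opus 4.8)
The two displayed identities \eqref{newWM2} and \eqref{inttoWpart} are equivalent under the substitution recorded in the statement, so the plan is to prove the series identity \eqref{newWM2} and then read off the integral form \eqref{inttoWpart} by evaluating the Jackson integral directly. In this picture \eqref{newWM2} is the multivariable analogue of the nonterminating Bailey formula (the second, ``${}_8W_7=$ sum of two ${}_4\varphi_3$'' form of \eqref{inttoser}), while the passage to the integral is the multivariable analogue of the first (``${}_8W_7=q$-integral'') form; for $M=1$ both collapse to \eqref{inttoser} itself.

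First I would carry out the routine half, the evaluation of the Jackson integral. Writing $\int_{q/a_{M+2}}^{q/a_{M+3}}=\int_0^{q/a_{M+3}}-\int_0^{q/a_{M+2}}$ and using the definition $\int_0^{\tau}f\,d_qt=(1-q)\sum_{n\ge0}f(\tau q^n)\tau q^n$ from section \ref{secpre}, each half-line integral becomes, after setting $t=q^{n+1}/a_j$ and applying the identity $(\lambda q^{n})_\infty=(\lambda)_\infty/(\lambda)_n$, a single balanced ${}_{M+3}\varphi_{M+2}$ with numerator parameters $\{qb_k/a_j\}_{1\le k\le M+3}$, denominator parameters $\{qa_k/a_j\}_{k\ne j}$, and an explicit infinite-product prefactor. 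Matching these two endpoint series (for $j=M+2$ and $j=M+3$) against the two ${}_{M+3}\varphi_{M+2}$ in \eqref{newWM2} under the dictionary $x_i=a_i$, $a=qb_{M+3}/(a_{M+2}a_{M+3})$, $c_1=qb_{M+3}/a_{M+2}$, $c_2=qb_{M+3}/a_{M+3}$, $d=a_{M+1}/b_{M+3}$ and $u$-parameters $1/b_j$ is then a direct computation: the balancing condition $a_1\cdots a_{M+3}=q^2b_1\cdots b_{M+3}$ turns into $d=a^2q^2/(c_1c_2b_1\cdots b_{M+2}x_1\cdots x_M)$, and the $\mathrm{idem}(c_1;c_2)$ symmetry is precisely the interchange of the two integration endpoints. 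I have checked that numerator and denominator parameters line up termwise under this dictionary, so this step is genuinely bookkeeping.

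The core is therefore \eqref{newWM2}, and the plan is to derive it from Kajihara's transformation, which plays the role of the terminating Watson transformation in the multivariable setting. As a first step I would specialize \eqref{Kajitrans} to $N=0$ (equivalently, run \eqref{transWn3} in the degenerate regime where the right-hand series sums) to obtain the closed-form evaluation of a terminating, balanced $W^{M,2}$, namely $W^{M,2}(\{x_i\};a;\{b_j\};\mu cq^{n},q^{-n};q)=\prod_{i=1}^{M}\tfrac{(aqx_i)_n}{(\mu c/(ax_i))_n}\prod_{j=1}^{M+2}\tfrac{(\mu cb_j/a)_n}{(aq/b_j)_n}$ with $\mu=a^{2}q/(cb_1\cdots b_{M+2}x_1\cdots x_M)$; this is the multivariable lift of Jackson's terminating very-well-poised ${}_8\varphi_7$ summation. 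The identity \eqref{newWM2} is then the nonterminating extension of this summation, in which the single closed form is replaced by a sum of two balanced ${}_{M+3}\varphi_{M+2}$ series. This mirrors exactly how Bailey's nonterminating formula is obtained from Jackson's terminating one when $M=1$, and the two-term (\emph{idem}) structure reflects the two-dimensional local solution space underlying the companion $q$-difference system.

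The main obstacle is this nonterminating extension: passing from the terminating summation to the two-term formula \eqref{newWM2} requires an analytic-continuation (or contiguity/three-term-relation) argument that simultaneously controls convergence for $|d|<1$ and pins down the two infinite-product prefactors as the correct limits of the terminating factors. The essential new difficulty over the scalar case is the bookkeeping: the Vandermonde ratio $\Delta(xq^l)/\Delta(x)$ and the per-variable products in \eqref{defkaji} must be tracked through the continuation, and the balancing relation among the $M+3$ parameters must be preserved so that both ${}_{M+3}\varphi_{M+2}$ series remain balanced throughout. Once the continuation is justified and the prefactors are matched, \eqref{newWM2} follows, and combined with the Jackson-integral evaluation of the second paragraph it yields the equivalent integral representation \eqref{inttoWpart}.
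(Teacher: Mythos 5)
Your reduction of the theorem to the series identity \eqref{newWM2}, and your plan to obtain \eqref{inttoWpart} by expanding each half-line Jackson integral into a balanced ${}_{M+3}\varphi_{M+2}$ and matching under the stated dictionary, agrees with what the paper does and is fine. The gap is in the core step. You propose to first extract a terminating balanced $W^{M,2}$ summation (the multivariable Jackson sum, from the $N=0$ case of \eqref{Kajitrans}) and then reach \eqref{newWM2} as its ``nonterminating extension'' by an unspecified analytic-continuation or contiguity argument. This is not just left vague --- it is structurally problematic: when one of $c_1,c_2$ is specialized to $q^{-n}$, the factor $(c_2)_\infty$ (or $(c_1)_\infty$) kills the corresponding ${}_{M+3}\varphi_{M+2}$ term entirely, so the terminating summation only sees \emph{one} of the two terms of \eqref{newWM2}. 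Continuation from the terminating data therefore cannot determine the second term or its infinite-product prefactor without substantial additional input, and you have not supplied it.

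The paper's route is different and is the actual content of the proof: it applies the terminating transformation \eqref{transWn3} between $W^{M,3}$ and a terminating ${}_{2M+8}W_{2M+7}$ (the multivariable analogue of Bailey's ${}_{10}W_9$ transformation, not of Jackson's sum), replaces $b_{M+3}\to b_{M+3}q^k$, sets $k=2K-1$, and splits the terminating very-well-poised sum into its two halves $\sum_{l=0}^{K-1}C_l+\sum_{l=0}^{K-1}C_{k-l}$ \emph{before} letting $K\to\infty$. Tannery's theorem then shows the left side tends to $W^{M,2}(\ldots;b_{M+3}/a)$ while the two halves tend separately to the two ${}_{M+3}\varphi_{M+2}$ series, which is exactly how the two-term structure and both prefactors arise. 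This is Bailey's classical splitting device, and it is the key idea missing from your proposal; without it (or a genuinely worked-out substitute that accounts for the invisible second term), the proof of \eqref{newWM2} is not complete.
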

Before proving this theorem, we give two remarks.
\begin{rem}
	When $M=1$, by putting 
	\begin{align}
		a_1=c,\ a_2=d,\ a_3=q/a,\ a_4=q/b,\ b_1=f,\ b_2=g,\ b_3=h,\ b_4=e,
	\end{align}
	the formula \eqref{inttoWpart} reduces to
	\begin{align}
		&\int_{a}^{b}\frac{(qt/a,qt/b,ct,dt)_\infty}{(et,ft,gt,ht)_\infty}d_qt\\
		&=b(1-q)\frac{(q,a/b,bq/a,ac,bc,d/e,abef,abeg,abeh)_\infty}{(ae,af,ag,ah,be,bf,bg,bh,abce)_\infty}{}_8W_7\left(\frac{abce}{q};ae,\frac{c}{f},\frac{c}{g},be,\frac{c}{h};\frac{d}{e}\right),
	\end{align}
	where $cd=abefgh$.
	Applying the transformation formula \cite[(2.10.1)]{GR}:
	\begin{align}
		\label{87trans}{}_8W_7\left(a;b,c,d,e,f;\frac{a^2q^2}{bcdef}\right)=\frac{(a q,a q/(e f),\lambda q/e,\lambda q/f)_\infty}{(a q/e,a q/f,\lambda q,\lambda q/(e f))_\infty}{}_8W_7\left(\lambda;\frac{\lambda b}{a},\frac{\lambda c}{a},\frac{\lambda d}{a},e,f;\frac{aq}{ef}\right),
	\end{align}
	where $\lambda =q a^2/(b c d)$, we have \eqref{inttoser}.
	We note that the transformation \eqref{87trans} can be derived from \eqref{inttoWpart} (see Remark \ref{remM1trans}).
\end{rem}
\begin{rem}
	Another transformation formula for $W^{M,2}$ is studied by Milne--Newcomb \cite[Theorem 4.1]{MN2012}.
	They give a transformation between $W^{M,2}$ and some $M$-ple sum.
\end{rem}
\begin{proof}[proof of Theorem \ref{thmkajiint}]
	{The proof is based on the method similar to Bailey's one \cite{B}.
		For Bailey's idea, see Gasper--Rahman \cite[section 2]{GR}.}
	The formula {\eqref{newWM2}} is derived by considering the limit $k\to\infty$ in \eqref{transWn3}.
	First, we consider the limit
	\begin{align}
		\lim_{k\to\infty}W^{M,3}(\{x_i\}_{1\leq i\leq M};a;\{b_j\}_{1\leq j\leq M+2},b_{M+3} q^k;c,\mu c ,q^{-k};q).
	\end{align}
	We recall the definition of the function $W^{M,3}$:
	\begin{align}
		\notag
		&W^{M,3}(\{x_i\}_{1\leq i\leq M};a;\{b_j\}_{1\leq j\leq M+2},b_{M+3} q^k;c,\mu c ,q^{-k};q)
		\\
		\notag
		&\sum_{l\in\mathbb{Z}_{\geq0}^M}q^{|l|}\frac{\Delta(xq^l)}{\Delta(x)}\prod_{i=1}^M\left(\frac{1-ax_iq^{|l|+l_i}}{1-ax_i}\frac{(ax_i)_{|l|}\prod_{j=1}^{M+2}(x_ib_j)_{l_i}}{\prod_{j=1}^M (qx_i/x_j)_{l_i}\cdot(aqx_i/c,aqx_i/(\mu c))_{l_i}}\right)\frac{(c,\mu c)_{|l|}}{\prod_{j=1}^{M+2}(aq/b_j)_{|l|}}
		\\
		&\phantom{\sum_{l\in\mathbb{Z}_{\geq0}^M}q^{|l}}\times \prod_{i=1}^M \frac{(x_ib_{M+3} q^k)_{l_i}}{(aqx_iq^k)_{l_i}}\cdot \frac{(q^{-k})_{|l|}}{(aqq^{-k}/b_{M+3})_{|l|}}.
	\end{align}
	Since $(x_ib_{M+3}q^k)_{l_i}$, $(a q x_i q^k)_{l_i}\xrightarrow{k\to\infty}1$ and $\displaystyle\frac{(q^{-k})_{|l|}}{(aqq^{-k}/b_{M+3})_{|l|}}\xrightarrow{k\to\infty}\left(\frac{b_{M+3}}{aq}\right)^{|l|}$, we have
	\begin{align}
		\notag
		&\lim_{k\to\infty}W^{M,3}(\{x_i\}_{1\leq i\leq M};a;\{b_j\}_{1\leq j\leq M+2},b_{M+3} q^k;c,\mu c ,q^{-k};q)
		\\
		\notag
		&\sum_{l\in\mathbb{Z}_{\geq0}^M}\left(\frac{b_{M+3}}{a}\right)^{|l|}\frac{\Delta(xq^l)}{\Delta(x)}\prod_{i=1}^M\left(\frac{1-ax_iq^{|l|+l_i}}{1-ax_i}\frac{(ax_i)_{|l|}\prod_{j=1}^{M+2}(x_ib_j)_{l_i}}{\prod_{j=1}^M (qx_i/x_j)_{l_i}\cdot(aqx_i/c,aqx_i/(\mu c))_{l_i}}\right)\frac{(c,\mu c)_{|l|}}{\prod_{j=1}^{M+2}(aq/b_j)_{|l|}}
		\\
		&=W^{M,2}\left(\{x_i\}_{1\leq i\leq M};a;\{b_j\}_{1\leq j\leq M+2};c,\mu c;\frac{b_{M+3}}{a}\right).\label{just1}
	\end{align}
	{Here, we take the termwise limit.
	This is justified by Tannery's theorem if $|b_{M+3}/a|<1$ (see Appendix \ref{appA}).}

	On the other hand, if $\mu=a^{3} q^{2}/(c^{2} b_1 b_2 \cdots b_{M+3} x_1 x_2 \cdots x_M )$, we have
	\begin{align}
		\notag&W^{M,3}(\{x_i\}_{1\leq i\leq M};a;\{b_j\}_{1\leq j\leq M+2},b_{M+3}q^k;c,\mu c q^k,q^{-k};q)\\
		\notag&=\prod_{i=1}^M \frac{(aqx_i)_k}{(\mu c q^{-k}/(a x_i))_k}\prod_{j=1}^{M+2}\frac{(\mu c b_jq^{-k}/a)_k}{(a q/b_j)_k}\cdot \frac{(\mu c b_{M+3}/a,c)_k}{(a q q^{-k}/b_{M+3},\mu q q^{-k})_k}
		\\
		&\phantom{=}\times {}_{2M+8}W_{2M+7}(\mu q^{-k};\{\mu c q^{-k}/(ax_i)\}_{1\leq i\leq M},\{aq/(cb_j)\}_{1\leq j\leq M+2},aqq^{-k}/(cb_{M+3}),\mu c,q^{-k};q)
	\end{align}
	by putting $b_{M+3}\to b_{M+3} q^k$ in \eqref{transWn3}.
	Since $(A)_k\xrightarrow{k\to\infty}(A)_\infty$ and 
	\begin{align}
		\prod_{l=1}^L \frac{(A_i q^{-k})_k}{(B_i q^{-k})_k}=\prod_{l=1}^{L}\left(\frac{A_i}{B_i}\right)^k\frac{(q/A_i)_k}{(q/B_i)_k}\xrightarrow{k\to\infty}\prod_{l=1}^L \frac{(q/A_i)_\infty}{(q/B_i)_\infty},
	\end{align}
	if $A_1\cdots A_L=B_1\cdots B_L$, we have
	\begin{align}
		\notag&\prod_{i=1}^M \frac{(aqx_i)_k}{(\mu c q^{-k}/(a x_i))_k}\prod_{j=1}^{M+2}\frac{(\mu c b_jq^{-k}/a)_k}{(a q/b_j)_k}\cdot \frac{(\mu c b_{M+3}/a,c)_k}{(a q q^{-k}/b_{M+3},\mu q q^{-k})_k}
		\\
		&\xrightarrow{k\to\infty}\prod_{i=1}^M \frac{(aqx_i)_\infty}{(a qx_i/(\mu c))_\infty}\prod_{j=1}^{M+2}\frac{(aq/(\mu c b_j))_\infty}{(a q/b_j)_\infty}\cdot \frac{(\mu c b_{M+3}/a,c)_\infty}{(b_{M+3}/a,1/\mu)_\infty}
	\end{align}
	We put $k=2K-1$ and
	\begin{align}
		C_l=\frac{1-\mu q^{-k}q^{2l}}{1-\mu q^{-k}}\prod_{i=1}^M\frac{(\mu cq^{-k}/(ax_i))_l}{(aqx_i/c)_l}\prod_{j=1}^{M+2}\frac{(aq/(cb_j))_l}{(\mu cb_jq^{-k}/a)_l}\cdot\frac{(\mu q^{-k},a q q^{-k}/(c b_{M+3}),\mu c,q^{-k})_l}{(q,\mu c b_{M+3}/a,q q^{-k}/c,q \mu)_l}q^l
	\end{align}
	By the definition of the very well-poised $q$-hypergeometric series ${}_{2M+8}W_{2M+7}$, we have
	\begin{align}
		\notag&{}_{2M+8}W_{2M+7}(\mu q^{-k};\{\mu c q^{-k}/(ax_i)\}_{1\leq i\leq M},\{aq/(cb_j)\}_{1\leq j\leq M+2},aqq^{-k}/(cb_{M+3}),\mu c,q^{-k};q)\\
		&=\sum_{l=0}^k C_l=\sum_{l=0}^{K-1}C_l+\sum_{l=0}^{K-1}C_{k-l}.
	\end{align}
	For $0\leq l\leq K-1$, we have
	\begin{align}
		C_l\xrightarrow{K\to\infty}\frac{\prod_{j=1}^{M+2}(aq/(cb_j))_l\cdot (\mu c)_l}{\prod_{i=1}^M(aqx_i/c)_l\cdot (q,\mu c b_{M+3}/a,q\mu)_l} q^l,
	\end{align}
	and 
	\begin{align}
		\notag&C_{k-l}\xrightarrow{K\to\infty}\frac{(c)_l\prod_{j=1}^{M+2}(aq/(\mu c b_j))}{\prod_{i=1}^M(aqx_i/(\mu c))_l\cdot (q,q/\mu,c b_{M+3}/a)_l}
		q^l\\
		&\times\prod_{i=1}^M\frac{(aqx_i/(\mu c))_\infty}{(aqx_i/c)_\infty}\prod_{j=1}^{M+2}\frac{(aq/(cb_j))_\infty}{(aq/(\mu c b_j))_\infty}\cdot\frac{(1/\mu,\mu c,cb_{M+3}/a)_\infty}{(c,\mu,\mu c b_{M+3}/a)_\infty}
	\end{align}
	The limit of $C_{k-l}$ can be calculated by using formulas $(a)_{k-l}=(a)_{k}/(a q^{k-l})_l$, $(aq^{-l})_l=(-a)^l q^{-l(l-1)/2}(q/a)_l$, $(a)_\infty=(aq)_\infty(1-a)$ and the condition $\mu=a^{3} q^{2}/(c^{2} b_1 b_2 \cdots b_{M+3} x_1 x_2 \cdots x_M )$.
	Hence, we have
	\begin{align}
		\notag\lim_{K\to\infty}\sum_{l=0}^{K-1}C_l&=\sum_{l=0}^\infty \frac{\prod_{j=1}^{M+2}(aq/(cb_j))_l\cdot (\mu c)_l}{\prod_{i=1}^M(aqx_i/c)_l\cdot (q,\mu c b_{M+3}/a,q\mu)_l} q^l,\\
		&={}_{M+3}\varphi_{M+2} \left(\begin{array}{c}
			\mu c,\{aq/(cb_j)\}_{1\leq j\leq M+2}\\\mu c b_{M+3}/a,q\mu,\{aqx_i/c\}_{1\leq i\leq M}
		\end{array};q\right),\label{just2}
	\end{align}
	and 
	\begin{align}
		\notag\lim_{K\to\infty}\sum_{l=0}^{K-1}C_{k-l}&=\prod_{i=1}^M\frac{(aqx_i/(\mu c))_\infty}{(aqx_i/c)_\infty}\prod_{j=1}^{M+2}\frac{(aq/(cb_j))_\infty}{(aq/(\mu c b_j))_\infty}\cdot\frac{(1/\mu,\mu c,cb_{M+3}/a)_\infty}{(c,\mu,\mu c b_{M+3}/a)_\infty}\\
		\notag&\phantom{=}\times\sum_{l=0}^\infty\frac{(c)_l\prod_{j=1}^{M+2}(aq/(\mu c b_j))}{\prod_{i=1}^M(aqx_i/(\mu c))_l\cdot (q,q/\mu,c b_{M+3}/a)_l}q^l
		\\
		\notag&=\prod_{i=1}^M\frac{(aqx_i/(\mu c))_\infty}{(aqx_i/c)_\infty}\prod_{j=1}^{M+2}\frac{(aq/(cb_j))_\infty}{(aq/(\mu c b_j))_\infty}\cdot\frac{(1/\mu,\mu c,cb_{M+3}/a)_\infty}{(c,\mu,\mu c b_{M+3}/a)_\infty}\\
		&\phantom{=}\times{}_{M+3}\varphi_{M+2}\left(\begin{array}{c}
			c,\{aq/(\mu c b_j)\}_{1\leq j\leq M+2}\\
			q/\mu,cb_{M+3}/a,\{aqx_i/(\mu c)\}_{1\leq i\leq M}
		\end{array};q\right).\label{just3}
	\end{align}
	{These limits are also justified by Tannery's theorem (see Appendix \ref{appA}).}
	{In conclusion, we obtain}
	\begin{align}
		\notag&W^{M,2}\left(\{x_i\}_{1\leq i\leq M};a;\{b_j\}_{1\leq j\leq M+2};c,\mu c;\frac{b_{M+3}}{a}\right)\\
		\notag&=\prod_{i=1}^M \frac{(aqx_i)_\infty}{(a qx_i/(\mu c))_\infty}\prod_{j=1}^{M+2}\frac{(aq/(\mu c b_j))_\infty}{(a q/b_j)_\infty}\cdot \frac{(\mu c b_{M+3}/a,c)_\infty}{(b_{M+3}/a,1/\mu)_\infty}\\
		\notag&\phantom{=}\times{}_{M+3}\varphi_{M+2} \left(\begin{array}{c}
			\mu c,\{aq/(cb_j)\}_{1\leq j\leq M+2}\\\mu c b_{M+3}/a,q\mu,\{aqx_i/c\}_{1\leq i\leq M}
		\end{array};q\right)\\
		\notag&+\prod_{i=1}^M \frac{(aqx_i)_\infty}{(a qx_i/c)_\infty}\prod_{j=1}^{M+2}\frac{(aq/( c b_j))_\infty}{(a q/b_j)_\infty}\cdot \frac{(\mu c,cb_{M+3}/a)_\infty}{(b_{M+3}/a,\mu)_\infty}\\
		&\phantom{=}\times{}_{M+3}\varphi_{M+2}\left(\begin{array}{c}
			c,\{aq/(\mu c b_j)\}_{1\leq j\leq M+2}\\
			q/\mu,cb_{M+3}/a,\{aqx_i/(\mu c)\}_{1\leq i\leq M}
		\end{array};q\right).
	\end{align}
	Putting $c=c_1$, $b_{M+3}=a^3q^2/(c_1 c_2b_1b_2\cdots b_{M+2}x_1x_2\cdots x_M)$, $d=a^2q^2/(c_1c_2b_1b_2\cdots b_{M+2}x_1x_2\cdots x_M)$, we finally get the desired equation \eqref{newWM2}.
	The equation \eqref{inttoWpart} is obtained by \eqref{newWM2}.
	{By simple calculations, each sum in the right-hand side of \eqref{newWM2} can be rewritten by Jackson integral as follows:}
	\begin{align}
		\notag&=\prod_{i=1}^M \frac{(aqx_i)_\infty}{(a qx_i/c_1)_\infty}\prod_{j=1}^{M+2}\frac{(aq/( c_1 b_j))_\infty}{(a q/b_j)_\infty}\cdot \frac{(c_1d,c_2)_\infty}{(d,c_2/c_1)_\infty}\\
		\notag&\phantom{=}\times{}_{M+3}\varphi_{M+2}\left(\begin{array}{c}
			c_1,\{aq/(c_2 b_j)\}_{1\leq j\leq M+2}\\
			qc_1/c_2,c_1d,\{aqx_i/c_2\}_{1\leq i\leq M}
		\end{array};q\right),\\
		\notag&=\prod_{i=1}^M \frac{(aqx_i)_\infty}{(a qx_i/c_1,aqx_i/c_2)_\infty}\prod_{j=1}^{M+2}\frac{(aq/( c_1 b_j),aq/(c_2b_j))_\infty}{(a q/b_j)_\infty}\cdot \frac{(c_1,c_2)_\infty}{(q,d,c_1/c_2,c_2/c_1)_\infty}\\
		&\phantom{=}\times\frac{c_2-c_1}{1-q}\int_0^{1/c_2} \frac{(qc_1t,qc_2t,c_1c_2dt)_\infty\prod_{i=1}^M(aqx_it)_\infty}{(c_1c_2t)_\infty\prod_{j=1}^{M+2}(aqt/b_j)_\infty} d_qt,\\
		\notag&\prod_{i=1}^M \frac{(aqx_i)_\infty}{(a qx_i/c_2)_\infty}\prod_{j=1}^{M+2}\frac{(aq/(c_2 b_j))_\infty}{(a q/b_j)_\infty}\cdot \frac{(c_2 d,c_1)_\infty}{(d,c_1/c_2)_\infty}\\
		\notag&\phantom{=}\times{}_{M+3}\varphi_{M+2} \left(\begin{array}{c}
			c_2,\{aq/(c_1b_j)\}_{1\leq j\leq M+2}\\qc_2/c_1,c_2d,\{aqx_i/c_1\}_{1\leq i\leq M}
		\end{array};q\right)\\
		\notag&=\prod_{i=1}^M \frac{(aqx_i)_\infty}{(a qx_i/c_1,aqx_i/c_2)_\infty}\prod_{j=1}^{M+2}\frac{(aq/( c_1 b_j),aq/(c_2b_j))_\infty}{(a q/b_j)_\infty}\cdot \frac{(c_1,c_2)_\infty}{(q,d,c_1/c_2,c_2/c_1)_\infty}\\
		&\phantom{=}\times\frac{c_1-c_2}{1-q}\int_0^{1/c_1} \frac{(qc_1t,qc_2t,c_1c_2dt)_\infty\prod_{i=1}^M(aqx_it)_\infty}{(c_1c_2t)_\infty\prod_{j=1}^{M+2}(aqt/b_j)_\infty} d_qt.
	\end{align} 
	Therefore we have
	\begin{align}
		\notag&W^{M,2}(\{x_i\}_{1\leq i\leq M};a;\{b_j\}_{1\leq j\leq M+2};c_1,c_2;d)\\
		\notag&=\prod_{i=1}^M \frac{(aqx_i)_\infty}{(a qx_i/c_1,aqx_i/c_2)_\infty}\prod_{j=1}^{M+2}\frac{(aq/( c_1 b_j),aq/(c_2b_j))_\infty}{(a q/b_j)_\infty}\cdot \frac{(c_1,c_2)_\infty}{(q,d,c_1/c_2,c_2/c_1)_\infty}\\
		&\phantom{=}\times\frac{c_2-c_1}{1-q}\int_{1/c_1}^{1/c_2} \frac{(qc_1t,qc_2t,c_1c_2dt)_\infty\prod_{i=1}^M(aqx_it)_\infty}{(c_1c_2t)_\infty\prod_{j=1}^{M+2}(aqt/b_j)_\infty} d_qt,
	\end{align}
	{where $d=a^2q^2/(c_1c_2b_1b_2\cdots b_{M+2}x_1x_2\cdots x_M)$.
	We put $a=q B_{M+3}/(A_{M+2}A_{M+3})$, $b_j=1/B_j$, $c_1=q B_{M+3}/A_{M+2}$, $c_2=q B_{M+3}/A_{M+3}$, $x_i=A_i$, rewrite $A_i$, $B_i$ as $a_i$, $b_i$, and change the variable of integration $t$ as $t a_{M+2}a_{M+3}/(q^2b_{M+3})$, then we have
	\begin{align}
		\notag&W^{M,2}\left(\{a_i\}_{1\leq i\leq M};\frac{qb_{M+3}}{a_{M+2}a_{M+3}};\left\{\frac{1}{b_j}\right\}_{1\leq j\leq M+2};\frac{qb_{M+3}}{a_{M+2}},\frac{qb_{M+3}}{a_{M+3}};\frac{a_{M+1}}{b_{M+3}}\right)\\
		\notag&=\prod_{i=1}^M\frac{(q^2a_ib_{M+3}/(a_{M+2}a_{M+3}))_\infty}{(qa_i/a_{M+2},qa_i/a_{M+3})_\infty}\prod_{j=1}^{M+2}\frac{(qb_j/a_{M+2},qb_j/a_{M+3})_\infty}{(q^2b_jb_{M+3}/(a_{M+2}a_{M+3}))_\infty} \\
		&\phantom{=}\times \frac{(qb_{M+3}/a_{M+2},qb_{M+3}/a_{M+3})_\infty}{(q,a_{M+1}/b_{M+3},a_{M+2}/a_{M+3},a_{M+3}/a_{M+2})_\infty}\frac{a_{M+3}-a_{M+2}}{(1-q)q}\int_{q/a_{M+2}}^{q/a_{M+3}}\prod_{k=1}^{M+3}\frac{(a_k t )_\infty}{(b_k t )_\infty}d_qt
	\end{align}
	where $a_1a_2\cdots a_{M+3}=q^2b_1b_2\cdots b_{M+3}$.}
 	This completes the proof of \eqref{inttoWpart}.
\end{proof}
\begin{cor}\label{corkajilinear}
	{
	We put $d=a^2q^2/(c_1c_2b_1b_2\cdots b_{M+2}x_1x_2\cdots x_M)$.
	\begin{itemize}
		\item[(1).] We have
		\begin{align}
			\notag&W^{M,2}(\{x_i\}_{1\leq i\leq M};a;\{b_j\}_{1\leq j\leq M+2};c_1,c_2;d)\\
			&=\frac{(c_1d,c_2d,aqx_1,aqx_1/(c_1c_2))_\infty}{(d,c_1c_2d,aqx_1/c_1,aqx_1/c_2)_\infty}W^{M,2}\left(\frac{c_1c_2d}{a q},\{x_i\}_{2\leq i\leq M};a;\{b_j\}_{1\leq j\leq M+2};c_1,c_2;\frac{aqx_1}{c_1c_2}\right),\label{eqsyma}
		\end{align}
		if $|d|<1$ and $|aqx_1/(c_1c_2)|<1$.
		\item[(2).] We have
		\begin{align}
			\notag&W^{M,2}(\{x_i\}_{1\leq i\leq M};a;\{b_j\}_{1\leq j\leq M+2};c_1,c_2;d)\\
			\notag&=\frac{(c_1c_2b_1d/(aq))_\infty}{(d)_\infty}\prod_{i=1}^M\frac{(aqx_i)_\infty}{(a^2q^2x_i/(c_1c_2b_1))_\infty}\prod_{j=2}^{M+2}\frac{(a^2q^2/(c_1c_2b_1b_j))_\infty}{(aq/b_j)_\infty}\\
			&\phantom{=}\times W^{M,2}\left(\{x_i\}_{1\leq i\leq M};\frac{a^2q}{c_1c_2b_1};\frac{aq}{c_1c_2},\{b_j\}_{2\leq j\leq M+2};\frac{aq}{c_1b_1},\frac{aq}{c_2b_1};\frac{c_1c_2b_1d}{aq}\right),\label{eqsymb}
		\end{align}
		if $|d|<1$ and $|c_1c_2b_1d/(aq)|<1$.
		\item[(3).] A three-term relation
		\begin{align}
			\notag&W^{M,2}(\{x_i\}_{1\leq i\leq M};a;\{b_j\}_{1\leq j\leq M+2};c_1,c_2;d)\\
			\notag&=\frac{(c_1,q/c_1,c_2d,q/(c_2d))_\infty}{(d,q/d,c_1/c_2,qc_2/c_1)_\infty}\prod_{i=1}^M\frac{(aqx_i,aq^2/(c_1c_2d))_\infty}{(aqx_i/c_2,aq^2/(c_1d))_\infty}\prod_{j=1}^{M+2}\frac{(aq/(c_2b_j),aq^2/(c_1db_j))_\infty}{(aq/b_j,aq^2/(c_1c_2db_j))_\infty}\\
			\notag&\phantom{=}\times  W^{M,2}\left(\{x_i\}_{1\leq i\leq M};\frac{aq}{c_1d};\{b_j\}_{1\leq j\leq M+2};c_2,\frac{q}{d};\frac{q}{c_1}\right)\\
			&+\mathrm{idem}(c_1;c_2).\label{threeterm}
		\end{align}
		holds if $|d|<1$, $|q/c_1|<1$ and $|q/c_2|<1$.
	\end{itemize}
	}
\end{cor}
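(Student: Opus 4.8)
The plan is to deduce all three identities from the single integral representation \eqref{inttoWpart} of Theorem \ref{thmkajiint}, exploiting the fact that the integrand $\prod_{k=1}^{M+3}(a_kt)_\infty/(b_kt)_\infty$ is symmetric under permutations of the numerator parameters $\{a_k\}$ and, independently, under permutations of the denominator parameters $\{b_k\}$, while the Jackson integral $\int_{q/a_{M+2}}^{q/a_{M+3}}$ depends on the $a_k$ only through the two endpoints $q/a_{M+2},q/a_{M+3}$. Under the dictionary of the theorem one has $x_i=a_i$, $b_{M+3}=c_1c_2/(aq)$, $a_{M+1}=c_1c_2d/(aq)$, $a_{M+2}=c_2/a$, $a_{M+3}=c_1/a$, and Kajihara's lower parameters equal the reciprocals $1/b_j$. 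Each symmetry of the integral therefore induces a nontrivial relation between the corresponding $W^{M,2}$'s once the explicit $q$-shifted-factorial prefactors are accounted for.

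For part (1), I would apply the transposition $a_1\leftrightarrow a_{M+1}$. Since neither index labels an endpoint, the integral is literally unchanged; on the Kajihara side this sends $x_1\mapsto a_{M+1}=c_1c_2d/(aq)$ and the argument $d=a_{M+1}/b_{M+3}\mapsto a_1/b_{M+3}=aqx_1/(c_1c_2)$, with all other data fixed. Dividing the two instances of \eqref{inttoWpart} and simplifying the resulting ratio of prefactors then yields \eqref{eqsyma}. For part (2), I would instead transpose $b_1\leftrightarrow b_{M+3}$, a permutation of denominator parameters that likewise leaves both the integrand and the endpoints invariant; tracing this swap through the dictionary produces exactly the shifts $a\mapsto a^2q/(c_1c_2b_1)$, $b_1\mapsto aq/(c_1c_2)$, $c_1\mapsto aq/(c_1b_1)$, $c_2\mapsto aq/(c_2b_1)$, $d\mapsto c_1c_2b_1d/(aq)$ of \eqref{eqsymb}, and the prefactor ratio supplies the stated constant.

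For part (3), the new ingredient is the additivity of the Jackson integral, namely $\int_{q/a_{M+2}}^{q/a_{M+3}}=\int_{q/a_{M+1}}^{q/a_{M+3}}-\int_{q/a_{M+1}}^{q/a_{M+2}}$, obtained by inserting the third point $q/a_{M+1}$. Reinterpreting each integral on the right via \eqref{inttoWpart} --- now with $a_{M+2}$ (respectively $a_{M+3}$) playing the role of the free parameter that feeds the argument --- gives two $W^{M,2}$'s whose parameters I have checked to be $a\mapsto aq/(c_1d)$, $c_1\mapsto c_2$, $c_2\mapsto q/d$, $d\mapsto q/c_1$ for the first and its $c_1\leftrightarrow c_2$ image for the second. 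Because the prefactor in \eqref{inttoWpart} carries the antisymmetric factor $(c_2-c_1)/(1-q)$, swapping $c_1\leftrightarrow c_2$ reverses its sign, which converts the minus sign coming from additivity into the $+\,\mathrm{idem}(c_1;c_2)$ form of \eqref{threeterm}.

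The conceptually clean part is recognizing each identity as a symmetry of this master integral; the main obstacle I anticipate is purely computational, namely verifying that the ratios of $q$-shifted-factorial prefactors collapse to the compact coefficients displayed in \eqref{eqsyma}, \eqref{eqsymb}, and \eqref{threeterm}. This requires systematic use of the balancing condition $a_1\cdots a_{M+3}=q^2b_1\cdots b_{M+3}$ together with $(x)_\infty=(1-x)(qx)_\infty$, after which I would record the convergence ranges --- $|d|<1$ alongside the matching bound on each new argument --- under which every series and integral in play is legitimate.
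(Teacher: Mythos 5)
Your proposal is correct and follows essentially the same route as the paper: the author also derives (1) and (2) from the invariance of the right-hand side of \eqref{inttoWpart} under the transpositions $a_1\leftrightarrow a_{M+1}$ and $b_1\leftrightarrow b_{M+3}$ respectively, and (3) from the cocycle identity $\int_{q/a_{M+2}}^{q/a_{M+3}}+\int_{q/a_{M+3}}^{q/a_{M+1}}+\int_{q/a_{M+1}}^{q/a_{M+2}}=0$, all via the same parameter dictionary. Your inversion of that dictionary (e.g.\ $a_{M+2}=c_2/a$, $a_{M+3}=c_1/a$, $a_{M+1}=c_1c_2d/(aq)$, $b_{M+3}=c_1c_2/(aq)$) agrees with the paper's substitution up to the harmless $c_1\leftrightarrow c_2$ symmetry of $W^{M,2}$.
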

\begin{proof}
	We assume $a_1\cdots a_{M+3}=q^2b_1\cdots b_{M+3}$.
	First, we prove (1) and (2).
	The integral
	\begin{align}
		\int_{q/a_{M+2}}^{q/a_{M+3}}\prod_{k=1}^{M+3}\frac{(a_kt)_\infty}{(b_kt)_\infty}d_qt,
	\end{align}
	is symmetric for $a_1,a_2,\ldots,a_{M+1}$ and for $b_1,b_2,\ldots,b_{M+3}$.
	We have
	\begin{align}
		&(\mbox{right-hand side of \eqref{inttoWpart}})=s(a_1,a_{M+1}).(\mbox{right-hand side of \eqref{inttoWpart}}),\label{proofsyma}\\
		&(\mbox{right-hand side of \eqref{inttoWpart}})=s(b_1,b_{M+3}).(\mbox{right-hand side of \eqref{inttoWpart}}),\label{proofsymb}
	\end{align}
	where $s(x,y):x\leftrightarrow y$.
	We put
	\begin{align}
		&a_i=x_i\quad (1\leq i\leq M),\ a_{M+1}=\frac{c_1c_2d}{A},\ a_{M+2}=\frac{c_1}{A},\ a_{M+3}=\frac{c_2}{A}\notag,\\
		&b_j=B_j\quad (1\leq j\leq M+2),\ b_{M+3}=\frac{c_1c_2}{Aq},\label{changeparam}
	\end{align}
	in \eqref{proofsyma} (resp. in \eqref{proofsymb}), and rewrite $A$, $B_j$ as $a$, $b_j$, we get \eqref{eqsyma} (resp. \eqref{eqsymb}).
	
	Next, we prove (3).
	By the definition of the Jackson integral, we have $\displaystyle \int_{q/a_{M+2}}^{q/a_{M+3}}+\int_{q/a_{M+3}}^{q/a_{M+1}}+\int_{q/a_{M+1}}^{q/a_{M+2}}=0$.
	Applying the transformation formula \eqref{inttoWpart} and putting  parameters as \eqref{changeparam}, we obtain \eqref{threeterm}.
\end{proof}
\begin{rem}
	
	The formula \eqref{eqsymb} is equivalent to Milne--Newcomb's transformation formula \cite[Theorem 5.5]{MN1996}.
\end{rem}
\begin{rem}
	
	We assume $a_1\cdots a_{M+3}=q^2b_1\cdots b_{M+3}$.
	We put
	\begin{align}
		\notag&W\left(\begin{array}{c}
			\{a_i\}_{1\leq i\leq M+3}\\
			\{b_i\}_{1\leq i\leq M+3}
		\end{array}\right)\\
		\notag&=
		\prod_{i=1}^M\frac{(qa_i/a_{M+2},qa_i/a_{M+3})_\infty}{(q^2a_ib_{M+3}/(a_{M+2}a_{M+3}))_\infty}\prod_{j=1}^{M+2}{(q^2b_jb_{M+3}/(a_{M+2}a_{M+3}))_\infty}\prod_{j=1}^{M+3}\frac{1}{(qb_j/a_{M+2},qb_j/a_{M+3})_\infty}\\
		\notag&\phantom{=}\times {(a_{M+1}/b_{M+3},a_{M+2}/a_{M+3},a_{M+3}/a_{M+2})_\infty}\frac{1}{a_{M+3}-a_{M+2}}\\
		&\phantom{=}\times W^{M,2}\left(\{a_i\}_{1\leq i\leq M};\frac{qb_{M+3}}{a_{M+2}a_{M+3}};\left\{\frac{1}{b_j}\right\}_{1\leq j\leq M+2};\frac{qb_{M+3}}{a_{M+2}},\frac{qb_{M+3}}{a_{M+3}};\frac{a_{M+1}}{b_{M+3}}\right).\label{defW}
	\end{align}
	Due to \eqref{inttoWpart}, we have
	\begin{align}
		W\left(\begin{array}{c}
			\{a_i\}_{1\leq i\leq M+3}\\
			\{b_i\}_{1\leq i\leq M+3}
		\end{array}\right)=\frac{1}{q(1-q)(q)_\infty}\int_{q/a_{M+3}}^{q/a_{M+2}}\prod_{k=1}^{M+3}\frac{(a_kt)_\infty}{(b_kt)_\infty}d_qt.\label{inttoW}
	\end{align}
	Hence, the function $W$ is symmetric for $a_1,a_2,\ldots,a_{M+1}$ and for $b_{1},b_2,\ldots,b_{M+3}$.
	Symmetries for $a_1,a_2,\ldots,a_M$ and for $b_1,b_2,\ldots,b_{M+2}$ are obvious.
	Only symmetries for $a_i\leftrightarrow a_{M+1}$ and for $b_i\leftrightarrow b_{M+3}$ are non-trivial.
	The formulas \eqref{eqsyma} and \eqref{eqsymb} describe these non-trivial symmetries of $i=1$ case.
\end{rem}
\begin{rem}\label{remM1trans}
	When $M=1$, the formulas \eqref{eqsyma} and \eqref{eqsymb} reduce to the well-known formula \cite[(2.10.1)]{GR}:
	\begin{align}
		{}_8W_7\left(a;b,c,d,e,f;\frac{a^2q^2}{bcdef}\right)=\frac{(a q,a q/(e f),\lambda q/e,\lambda q/f)_\infty}{(a q/e,a q/f,\lambda q,\lambda q/(e f))_\infty}{}_8W_7\left(\lambda;\frac{\lambda b}{a},\frac{\lambda c}{a},\frac{\lambda d}{a},e,f;\frac{aq}{ef}\right),
	\end{align}
	where $\lambda =q a^2/(b c d)$.
\end{rem}

\subsection{A $q$-difference system}
In this subsection, we construct a multivariable $q$-difference system associated with the following integral:
\begin{align}\label{intmulti}
	\varphi_{i,j}=\int_{q/a_i}^{q/a_j}\prod_{k=1}^{M+3}\frac{(a_k t)_{\infty}}{(b_k t)_{\infty}}d_q t,
\end{align}
where
\begin{align}
	a_1 \cdots a_{M+3}=q^2 b_1  \cdots b_{M+3}.\label{relationexponent}
\end{align}
This integral is a $q$-analog of
\begin{align}
	\int_C \prod_{i=1}^{M+3}(1-t_it)^{\nu_i}\ \ (\nu_1+\cdots +\nu_{M+3}=-2).
\end{align}
When $M=1$, this integral is a solution \eqref{RiePapint} of the Riemann--Papperitz equation.
Therefore we call \eqref{intmulti} the Jackson integral of Riemann--Papperitz type.

{First, we introduce the word ``rank''.
\begin{defn}
	Let $\{x_i\}_{1\leq i\leq m}$ be variables and $E=E(T_1,T_2,\ldots,T_k)$ be a homogeneous linear $q$-difference system, where $T_j$ is a $q$-difference operator with respect to some variables, i.e. $T_j=\prod_{i=1}^m T_{x_i}^{n_{i,j}}$.
	We put $F={\mathbb{C}(\{x_i\}_{1\leq i\leq m})}$.
	The rank of $E$ is $r$ if
	\begin{align}
		\dim_F((\mathrm{span}_F\{T_j^i y\mid 1\leq j\leq k,\ i\in\mathbb{Z}\})/{\sim})=r.
	\end{align}
	Here the relation $A(y)\sim B(y)$ means that $A(y)-B(y)=0$ holds for any solution $y$ of the system $E$. 
\end{defn}
\begin{lemm}\label{lemmarank}
	Let $E=E(T_1,T_2,\ldots,T_k)$ be a homogeneous linear $q$-difference system of rank $r$.
	If $y_1,\ldots,y_{r+1}$ are solutions of $E$, then $y_1,\ldots,y_{r+1}$ are linearly dependent over $K=\{C\mid T_j C=C\ (j=1,\ldots,k)\}$
\end{lemm}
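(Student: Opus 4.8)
The plan is to prove the statement by a \emph{shortest-relation} argument inside the rank-$r$ quotient space, which is the $q$-difference analogue of the classical fact that the solution space of a linear system of rank $r$ has dimension $r$ over its field of constants. First I would fix once and for all an $F$-basis of the $r$-dimensional space $V:=(\mathrm{span}_F\{T_j^i y\mid 1\leq j\leq k,\ i\in\mathbb{Z}\})/{\sim}$; since the spanning set contains $\mathrm{id}\cdot y=y$, I may arrange the basis in the form $[P_1 y],\dots,[P_r y]$ with $P_1=\mathrm{id}$, each $P_l$ being a word in the operators $T_1,\dots,T_k$. The crucial structural input is that $V$ is stable under every $T_j$ and that $T_j$ acts on $V$ as an invertible $F$-semilinear map, its inverse being $T_j^{-1}$ (which keeps us inside the span precisely because negative powers $i\in\mathbb{Z}$ are allowed). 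Consequently there is an invertible matrix $B^{(j)}=(B^{(j)}_{ll'})$ over $F$ with $T_j(P_l y)\equiv\sum_{l'}B^{(j)}_{ll'}P_{l'}y$, and this congruence, being a relation valid for all solutions, holds verbatim when $y$ is replaced by any solution $y_s$.

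Given solutions $y_1,\dots,y_{r+1}$, I would then consider the $r+1$ vectors $v_s=(P_1 y_s,\dots,P_r y_s)$. They lie in a free module of rank $r$ over the function ring generated by $F$ and the solutions, so they admit a nontrivial linear relation $\sum_s g_s\,P_l y_s=0$ valid for every $l$, with coefficients $g_s$ in the fraction field of that ring. Among all such nontrivial relations I would pick one with the fewest nonzero coefficients and normalise it so that some distinguished coefficient $g_{s_0}$ equals $1$.

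The heart of the proof is to force the coefficients of a shortest relation into $K$. Applying $T_j$ to $\sum_s g_s P_l y_s=0$ and using $T_j(P_l y_s)=\sum_{l'}B^{(j)}_{ll'}P_{l'}y_s$ gives $\sum_{l'}B^{(j)}_{ll'}\bigl(\sum_s (T_j g_s)P_{l'}y_s\bigr)=0$; invertibility of $B^{(j)}$ then forces $\sum_s(T_j g_s)P_{l'}y_s=0$ for all $l'$, so the shifted tuple $(T_j g_s)_s$ is again a relation. Subtracting the original relation yields the relation $(T_j g_s-g_s)_s$, whose distinguished coefficient vanishes because $T_j(1)=1$; it is therefore strictly shorter, and minimality forces it to be trivial. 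Hence $T_j g_s=g_s$ for every $j$ and every $s$, i.e.\ each $g_s\in K$. Reading off the component $l=1$, where $P_1=\mathrm{id}$, gives $\sum_s g_s y_s=0$ with $g_s\in K$ not all zero, which is exactly the asserted $K$-linear dependence.

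I expect the main obstacle to be the structural step: justifying that $V$ is closed under the $T_j$ and that the representing matrices $B^{(j)}$ are invertible over $F$. This is what makes $(T_j g_s)_s$ a relation again and drives the minimality argument, and it has to be read off carefully from the definition of rank, in particular from the inclusion of all integer powers $i\in\mathbb{Z}$, which supplies the inverse operators. Once this is in place the combinatorial minimality argument is routine. If one prefers to avoid the module language, the same conclusion is reachable by the equivalent Casoratian route: form the $(r+1)\times(r+1)$ determinant built from the $P_l y_s$ together with one extra shift, observe that it vanishes because the extra row is an $F$-combination of the others, and then extract the constancy of the Cramer coefficients by the same ``apply $T_j$ and compare'' computation.
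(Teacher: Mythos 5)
Your argument reaches the right conclusion, but by a visibly different and much more detailed route than the paper's. The paper's entire proof is: since $\dim_F V=r$, the classes $[y],[T_jy],\ldots,[T_j^ry]$ satisfy an $F$-linear relation valid for every solution, hence the $(r+1)\times(r+1)$ Casoratian determinant $\det\bigl(T_j^i y_s\bigr)_{0\le i\le r,\ 1\le s\le r+1}$ vanishes --- and it stops there, leaving implicit the (nontrivial) descent from a vanishing Casoratian to a linear relation with coefficients in the pseudo-constant field $K$. Your minimal-relation argument is precisely the standard machinery that supplies this missing step: it is what one would use to show that the Cramer cofactors of the singular Casoratian matrix can be normalised to lie in $K$. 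So in this respect your write-up is more complete than the paper's own proof, and your closing remark that the two routes are equivalent is accurate.

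The one genuine issue is the structural step you flag but do not resolve: the $T_j$-stability of $V$ and the invertibility of the matrices $B^{(j)}$. Under the paper's literal definition of rank, $V$ is the $F$-span of the \emph{pure} powers $T_j^i y$ only; if a basis element is $[T_{j'}^{i}y]$ with $j'\neq j$, then $T_jT_{j'}^{i}y$ is a mixed word and nothing in the hypothesis places it back in $V$, so $B^{(j)}$ need not exist. This is not a removable technicality for your route: without simultaneous stability under all the $T_j$ you only obtain, for each $j$ separately, dependence over $K_j=\{C\mid T_jC=C\}$, and passing from that to dependence over $K=\bigcap_j K_j$ is exactly the point at issue. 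In the intended application the stability does hold --- the three-term relations of $q$-$RP^M$ rewrite every mixed shift as an $F$-combination of powers of $T_{a_1}T_{b_1}$, which is how the paper computes the rank in Theorem \ref{thmmultisystem} --- but it is a property of that system, not a consequence of the abstract definition. To make your proof self-contained you should either build this stability into the hypothesis (equivalently, define $V$ as the span of all words in the $T_j^{\pm1}$ applied to $y$) or verify it before invoking the relation-shifting step.
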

\begin{proof}
	Suppose that $y$ is a solution of $E$.
	Since the rank of $E$ is $r$, $T_j^{r}y$ can be written as a linear combination of $y,T_jy,\ldots,T_j^{r-1}y$.
	Hence we have
	\begin{align}
		\det\begin{pmatrix}
			y_1&y_2&\cdots &y_{r+1}\\
			T_j y_1&T_j y_2&\cdots &T_j y_{r+1}\\
			\vdots&\vdots&&\vdots\\
			T_j^r y_1&T_j^r y_2&\cdots &T_j^r y_{r+1}
		\end{pmatrix}=0.
	\end{align}
	This completes the proof.
\end{proof}
}
In \cite{FN}, a $q$-difference equation associated with the Jackson  integral of Jordan--Pochhammer type:
\begin{align}
	\int_0^{\tau\infty}t^{\alpha-1} \frac{(Axt)_\infty}{(Bxt)_\infty}\prod_{i=2}^{M+3} \frac{(a_it)_\infty}{(b_it)_\infty}d_qt,\label{intJPgeneral}
\end{align}
is given.
{The integral \eqref{intJPgeneral} converges if $|q^\alpha|<1$ and $\left|q^{-\alpha}\dfrac{Aa_2a_3\cdots a_{M+3}}{Bb_2b_3\cdots b_{M+3}}\right|<1$.}
\begin{prop}[\cite{FN}]
	We assume $T_x\tau=q^l\tau$ $(l\in\mathbb{Z})$, {$|q^\alpha|<1$ and $\left|q^{-\alpha}\dfrac{Aa_2a_3\cdots a_{M+3}}{Bb_2b_3\cdots b_{M+3}}\right|<1$.}
	The integral \eqref{intJPgeneral} satisfies the following $q$-difference equation:
	\begin{align}
		\left[\sum_{k=0}^{M+2}(-1)^{k}x^{M+2-k}[e_k(a)T_x^{-1}-q^\alpha e_k(b)]\prod_{i=0}^{M+1-k}(B-Aq^iT_x)\prod_{i=0}^{k-1}(1-q^{-i}T_x)\right]y=0.\label{eqJPgeneral}
	\end{align}
	Here, $a=(a_2,\ldots,a_{M+3})$, $b=(b_2,\ldots,b_{M+3})$ and $e_k$ is the elementary symmetric function of degree $k$.
\end{prop}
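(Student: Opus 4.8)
The plan is to realize the operator in \eqref{eqJPgeneral}, applied to the integrand of \eqref{intJPgeneral}, as a total $q$-difference in the integration variable $t$, so that its Jackson integral over $[0,\tau\infty)$ vanishes. Write the integrand as
\begin{align}
\Phi(x,t)=t^{\alpha-1}\frac{(Axt)_\infty}{(Bxt)_\infty}\prod_{i=2}^{M+3}\frac{(a_it)_\infty}{(b_it)_\infty},\notag
\end{align}
and let $L$ denote the operator in the bracket of \eqref{eqJPgeneral}. I would use two structural facts. First, for any $g$ with $g(\tau q^n)\to0$ as $n\to\pm\infty$ the bilateral sum telescopes, giving $\int_0^{\tau\infty}\frac{g(t)-g(qt)}{(1-q)t}\,d_qt=0$. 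Second, since $T_x\tau=q^l\tau$, a reindexing of the bilateral sum yields $T_x\int_0^{\tau\infty}\Phi\,d_qt=\int_0^{\tau\infty}T_x\Phi\,d_qt$, so that $\int_0^{\tau\infty}L\Phi\,d_qt=L\,y$. Hence it suffices to exhibit an explicit $G(t)$ with $L\Phi=\frac{G(t)-G(qt)}{(1-q)t}$ and with $G$ decaying at both ends of the $q$-lattice.

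The first concrete step is to record the two elementary shift relations, obtained by cancelling one factor in the infinite products:
\begin{align}
T_x\Phi=\frac{1-Bxt}{1-Axt}\Phi,\qquad \Phi(x,qt)=q^{\alpha-1}\frac{1-Bxt}{1-Axt}\prod_{i=2}^{M+3}\frac{1-b_it}{1-a_it}\,\Phi(x,t).\notag
\end{align}
Iterating the first gives $T_x^n\Phi=\prod_{i=0}^{n-1}\frac{1-Bq^ixt}{1-Aq^ixt}\Phi$. The key computation is that each building block of $L$ telescopes against these factors; I would verify by induction on the number of factors that
\begin{align}
\prod_{i=0}^{m}(B-Aq^iT_x)\Phi=\frac{\prod_{i=0}^{m}(B-Aq^i)}{\prod_{i=0}^{m}(1-Aq^ixt)}\Phi,\qquad \prod_{i=0}^{k-1}(1-q^{-i}T_x)\Phi=(xt)^k\frac{\prod_{i=0}^{k-1}(B-Aq^i)}{\prod_{i=0}^{k-1}(1-Aq^ixt)}\Phi,\notag
\end{align}
and, applying the first product to the output of the second, that the $A$-products recombine so that the result no longer depends on the split:
\begin{align}
\prod_{i=0}^{M+1-k}(B-Aq^iT_x)\prod_{i=0}^{k-1}(1-q^{-i}T_x)\Phi=(xt)^k\frac{\prod_{i=0}^{M+1}(B-Aq^i)}{\prod_{i=0}^{M+1}(1-Aq^ixt)}\Phi.\notag
\end{align}

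With this $k$-independence, summing over $k$ is immediate: the factors $(-1)^ke_k(a)$ and $(-1)^ke_k(b)$ are precisely the coefficients of $\prod_{i=2}^{M+3}(1-a_iz)$ and $\prod_{i=2}^{M+3}(1-b_iz)$, so after multiplying by $x^{M+2-k}(xt)^k=x^{M+2}t^k$ and applying $T_x^{-1}$ in the first half, the two halves of $L\Phi$ collapse to a single rational factor times $\Phi$,
\begin{align}
L\Phi=\Bigl(\textstyle\prod_{i=0}^{M+1}(B-Aq^i)\Bigr)x^{M+2}\Phi\left[\frac{\prod_{i=2}^{M+3}(1-a_it/q)}{(1-Bxt/q)\prod_{i=0}^{M}(1-Aq^ixt)}-\frac{q^\alpha\prod_{i=2}^{M+3}(1-b_it)}{\prod_{i=0}^{M+1}(1-Aq^ixt)}\right].\notag
\end{align}
I would then take $G(t)=t\,\Phi(x,t)V(t)$ with $V(t)=(1-q)\prod_{i=0}^{M+1}(B-Aq^i)\,x^{M+2}\frac{\prod_{i=2}^{M+3}(1-a_it/q)}{(1-Bxt/q)\prod_{i=0}^{M}(1-Aq^ixt)}$; using the $t$-shift relation to compute $\frac{G(t)-G(qt)}{(1-q)t}$, the factors $\prod(1-a_it)$ and $(1-Bxt)$ produced by $\Phi(x,qt)$ cancel and the two denominators fuse into $\prod_{i=0}^{M+1}(1-Aq^ixt)$, reproducing exactly the bracket above, so $L\Phi=\frac{G(t)-G(qt)}{(1-q)t}$.

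The main obstacle is not the algebra but the boundary analysis: one must check $G(\tau q^n)\to0$ as $n\to\pm\infty$. Near $t=0$ this reduces to $t^\alpha\to0$, which is exactly $|q^\alpha|<1$, while near $t=\infty$ the decay is controlled by the product of the $A,B,a_i,b_i$ and is precisely the second hypothesis $\bigl|q^{-\alpha}Aa_2\cdots a_{M+3}/(Bb_2\cdots b_{M+3})\bigr|<1$. Once both inequalities hold the telescoping sum converges to $0$, and the commutation of $T_x$ with $\int_0^{\tau\infty}$ then gives $Ly=0$. A secondary point deserving care is the bookkeeping behind the recombination of the $A$-products; I would isolate the $k$-independence of $\prod_{i=0}^{M+1-k}(B-Aq^iT_x)\prod_{i=0}^{k-1}(1-q^{-i}T_x)\Phi$ as a short preliminary lemma so that the final summation over $k$ becomes a one-line application of the generating function $\sum_k(-1)^ke_k(\cdot)z^k=\prod_i(1-(\cdot)_iz)$.
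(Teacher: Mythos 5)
The paper itself gives no proof of this proposition --- it is quoted from the reference [FN] --- so there is nothing internal to compare against; I have therefore checked your argument on its own terms, and it is correct. The computations all verify: $(B-Aq^kT_x)$ applied to $\Phi/\prod_{i=0}^{k-1}(1-Aq^ixt)$ produces the factor $(B-Aq^k)/(1-Aq^kxt)$, the commutation $T_x[(xt)^kf]=q^k(xt)^kT_xf$ is exactly what makes the two partial products over $i=0,\dots,k-1$ and $i=k,\dots,M+1$ fuse into the $k$-independent expression $(xt)^k\prod_{i=0}^{M+1}(B-Aq^i)\big/\prod_{i=0}^{M+1}(1-Aq^ixt)$, the generating-function identity $\sum_k(-1)^ke_k(\cdot)z^k=\prod_i(1-(\cdot)_iz)$ collapses the sum, and your $G(t)=t\,\Phi(x,t)V(t)$ does satisfy $L\Phi=\frac{G(t)-G(qt)}{(1-q)t}$ after using the $t$-shift relation for $\Phi$. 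The boundary analysis is also attributed to the right hypotheses: $|q^\alpha|<1$ controls $G(\tau q^n)\to0$ as $n\to+\infty$, and since $V$ is a ratio of two degree-$(M+2)$ polynomials in $t$ it stays bounded as $t\to\infty$, so the decay at $n\to-\infty$ is governed by $t^\alpha\frac{(Axt)_\infty}{(Bxt)_\infty}\prod_i\frac{(a_it)_\infty}{(b_it)_\infty}\sim C\bigl(q^{-\alpha}Aa_2\cdots a_{M+3}/(Bb_2\cdots b_{M+3})\bigr)^m$, which is the second hypothesis. This total-difference/telescoping method is the standard route (and, as far as I can tell, the one used in [FN]), so your proof is both correct and in the expected spirit.
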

\begin{rem}
	A linear $q$-difference system associated with the Jackson integral of Jordan--Pochhammer type \eqref{intJPgeneral} was obtained by Mimachi \cite{Mi1989} and Matsuo \cite{Ma}.
	A $q$-difference system for the Jackson integral of Selberg type, which contains the system for the Jordan--Pochhammer type, was obtained by Mimachi \cite{Mi1994}.
	In \cite{It}, a $q$-difference system for the Jackson integral of Selberg type was also discussed, and the above results  were summarized.
	Hence, for more details on the Jackson integral of Jordan--Pochhammer type, see \cite{It,Ma,Mi1989,Mi1994}.
\end{rem}
We put $\alpha=1$ and $A a_2\cdots a_{M+3}=q^2 Bb_2\cdots b_{M+3}$, then the equation \eqref{eqJPgeneral} is reducible:
\begin{align}
	\notag&\sum_{k=0}^{M+2}(-1)^{k}x^{M+2-k}[e_k(a)T_x^{-1}-q^\alpha e_k(b)]\prod_{i=0}^{M+1-k}(B-Aq^iT_x)\prod_{i=0}^{k-1}(1-q^{-i}T_x)\\
	&=(B-Aq^{-1}T_x)(1-q^{-1-M}T_x)E_M,
\end{align}
where the rank $M+1$ operator $E_M=E_M(T_x)$ is defined as
\begin{align}
	\notag E_M&=x^{M+2} T_x^{-1}\prod_{i=0}^{M}(B-Aq^iT_x)\\
	\notag&+\sum_{k=1}^{M+1}(-1)^{k}x^{M+2-k}[e_{k}(a)T_{x}^{-1}-qe_{k}(b)]\prod_{i=0}^{M-k}(B-Aq^iT_{x})\prod_{i=0}^{k-2}(1-q^{-i}T_{x})\\
	\label{defequationEM}&+(-1)^M \frac{a_2\cdots a_{M+3}}{B}T_x^{-1}\prod_{i=0}^{M}(1-q^{-i}T_x).
\end{align}
The Jackson integral
\begin{align}
	\int_0^{\tau\infty}\frac{(Axt)_\infty}{(Bxt)_\infty}\prod_{i=2}^{M+3}\frac{(a_it)_\infty}{(b_it)_\infty}d_qt,\ \ (Aa_2\cdots a_{M+3}=q^2 Bb_2\cdots b_{M+3}),
\end{align}
satisfies the equation $(B-Aq^{-1}T_x)(1-q^{-1-M}T_x)E_My=0$.
{If $B/A\neq q^M$, a general solution for $(B-Aq^{-1}T_x)(1-q^{-1-M}T_x) z=0$ is given by
\begin{align}
	C_1 \frac{\theta(Ax/q)}{\theta(Bx)}+C_2x^{M+1},
\end{align}
where $C_1$, $C_2$ are pseudo-constants, i.e. $T_xC_i=C_i$.}
Hence, we have
\begin{align}
	\label{pseudo}E_M \int_0^{\tau\infty}\frac{(Axt)_\infty}{(Bxt)_\infty}\prod_{i=2}^{M+3}\frac{(a_it)_\infty}{(b_it)_\infty}d_qt=C_1 \frac{\theta(Ax/q)}{\theta(Bx)} +C_2 x^{M+1},
\end{align}
where $C_1$, $C_2$ are pseudo-constants.
\begin{rem}
	If $\tau$ is independent from $x$,  the right-hand side can be written as
	\begin{align}
		C_1\frac{\theta(Ax\tau/q)}{\theta(Bx\tau)}+C_2 x^{M+1},
	\end{align}
	where $C_1$, $C_2$ are constants, not just pseudo-constants.
	This can be found as follows.
	
	Since $\tau$ is a constant, there exist pseudo-constants $C_1=C_1(x)$ and $C_2=C_2(x)$ such that
	\begin{align}
		E_M \int_0^{\tau\infty}\frac{(Axt)_\infty}{(Bxt)_\infty}\prod_{i=2}^{M+3}\frac{(a_it)_\infty}{(b_it)_\infty}d_qt=C_1 \frac{\theta(Ax\tau/q)}{\theta(Bx\tau)} +C_2 x^{M+1}.
	\end{align}
	On the other hand, the left-hand side of the equation \eqref{pseudo} has simple poles only at $Bx\tau\in q^{\mathbb{Z}}$.
	Thus the function
	\begin{align}
		\theta(Bx\tau)E_M \int_0^{\tau\infty}\frac{(Axt)_\infty}{(Bxt)_\infty}\prod_{i=2}^{M+3}\frac{(a_it)_\infty}{(b_it)_\infty}d_qt,
	\end{align}
	is holomorphic for $x\in\mathbb{C}^{\times}$.
	Therefore the function
	\begin{align}
		C_1\theta(Ax\tau/q)+C_2x^{M+1}\theta(Bx\tau),
	\end{align}
	is holomorphic for $x\in\mathbb{C}^{\times}$.
	For $x_0\in\mathbb{C}^{\times}$, we have
	\begin{align}
		\label{remprpseudo1}&\underset{x=x_0}{\mathrm{res}}C_1(x)\theta(Ax\tau/q)=-\underset{x=x_0}{\mathrm{res}}C_2(x) x^{M+1}\theta(Bx\tau),\\
		&\underset{x=x_0q}{\mathrm{res}}C_1(x)\theta(Ax\tau/q)=-\underset{x=x_0q}{\mathrm{res}}C_2(x) x^{M+1}\theta(Bx\tau)
	\end{align}
	Since $C_1$, $C_2$ are pseudo-constants and $\theta(qz)=(-1/z)\theta(z)$, we have
	\begin{align}
		&\underset{x=x_0q}{\mathrm{res}}C_1(x)\theta(Ax\tau/q)=-\frac{q}{Ax_0\tau}\underset{x=x_0}{\mathrm{res}}C_1(x)\theta(Ax\tau/q),\\
		\label{remprpseudo4}&\underset{x=x_0q}{\mathrm{res}}C_2(x)x^{M+1}\theta(Bx\tau)=-\frac{q^{M+1}}{Bx_0\tau}\underset{x=x_0}{\mathrm{res}}C_2(x)\theta(Bx\tau).
	\end{align}
	Due to \eqref{remprpseudo1}--\eqref{remprpseudo4} and the condition $B/A\neq q^M$, we have
	\begin{align}
		\underset{x=x_0}{\mathrm{res}}C_1(x)\theta(Ax\tau/q)=\underset{x=x_0}{\mathrm{res}}C_2(x) x^{M+1}\theta(Bx\tau)=0.
	\end{align}
	Therefore $C_1(x)$ (resp. $C_2(x)$) has simple poles at most $x=q^n/(A\tau)$ (resp. $x=q^n/(B\tau)$) ($n\in\mathbb{Z}$).
	We put $q=e^{2\pi \sqrt{-1}p}$ and $f_i(z)=C_i(e^{2\pi \sqrt{-1}z})$.
	Then $f_i$ is a elliptic function with periods $1$ and $p$.
	The order of $f_i$ is $0$ or $1$, however there is no elliptic function of order $1$.
	Therefore $f_i(z)=C_i(e^{2\pi \sqrt{-1}z})$ is a constant.
	
\end{rem}
The following lemma is useful to calculate these pseudo-constants.
\begin{lemm}\label{lemmarespsi}
	Suppose $|d_1\cdots d_{M+2}/(c_1\cdots c_{M+2})|<1$.
	We have
	\begin{align}
		\lim_{z\to1} (1-z){}_{M+2}\psi_{M+2}\left(\begin{array}{c}
			c_1,\ldots,c_{M+2}\\
			d_1,\ldots,d_{M+2}
		\end{array};z\right)=\frac{(c_1,\ldots,c_{M+2})_\infty}{(d_1,\ldots,d_{M+2})_\infty}.\label{respsi}
	\end{align}
	Here, ${}_{M+2}\psi_{M+2}$ is the bilateral $q$-hypergeometric series:
	\begin{align}
		{}_{M+2}\psi_{M+2}\left(\begin{array}{c}
			c_1,\ldots,c_{M+2}\\
			d_1,\ldots,d_{M+2}
		\end{array};z\right)=\sum_{l\in\mathbb{Z}}\frac{(c_1,\ldots,c_{M+2})_l}{(d_1,\ldots,d_{M+2})_l}z^l\ \ (|d_1\cdots d_{M+2}/(c_1\cdots c_{M+2})| <|z|<1).
	\end{align}
\end{lemm}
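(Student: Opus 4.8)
The plan is to isolate the simple pole of the bilateral series at $z=1$ and identify its residue directly. Write the summand and its positive limit as
\[
a_l=\frac{(c_1,\ldots,c_{M+2})_l}{(d_1,\ldots,d_{M+2})_l},
\qquad
L:=\frac{(c_1,\ldots,c_{M+2})_\infty}{(d_1,\ldots,d_{M+2})_\infty}.
\]
Using $(c)_l=(c)_\infty/(cq^l)_\infty$ one gets, for $l\ge 0$,
\[
a_l=L\prod_{i=1}^{M+2}\frac{(d_iq^l)_\infty}{(c_iq^l)_\infty},
\]
so that $a_l\to L$ as $l\to+\infty$; more precisely each factor is $1+O(q^l)$, whence $a_l-L=O(q^l)$. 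The pole at $z=1$ will come entirely from this nonvanishing limit of the positive tail, and the residue will turn out to be exactly $L$, which is the right-hand side of \eqref{respsi}.

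Second, I would control the negative tail. Using $(c)_{-n}=(-1/c)^n q^{\binom{n+1}{2}}/(q/c)_n$ together with the fact that there are the \emph{same} number $M+2$ of upper and lower parameters, the Gaussian factors $q^{(M+2)\binom{n+1}{2}}$ and the signs $(-1)^{n(M+2)}$ cancel between numerator and denominator, leaving
\[
a_{-n}=\left(\frac{d_1\cdots d_{M+2}}{c_1\cdots c_{M+2}}\right)^{n}\prod_{i=1}^{M+2}\frac{(q/d_i)_n}{(q/c_i)_n}.
\]
Because $|d_1\cdots d_{M+2}/(c_1\cdots c_{M+2})|<1$ and the remaining $q$-shifted factorials converge as $n\to\infty$, the coefficients $a_{-n}$ tend to $0$ geometrically.

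Finally I assemble the two tails. Split the sum as $\sum_{l\ge 0}+\sum_{l<0}$, and on the nonnegative part write $a_l=L+(a_l-L)$, giving
\[
\sum_{l\ge 0}a_l z^l=\frac{L}{1-z}+\sum_{l\ge 0}(a_l-L)z^l,
\]
where the last sum converges at $z=1$ by the $O(q^l)$ estimate and hence stays bounded as $z\to1$. By the geometric decay of $a_{-n}$, the negative part $\sum_{l<0}a_l z^l$ likewise converges at $z=1$ and so is bounded there. Multiplying through by $(1-z)$ and letting $z\to 1$ within $|z|<1$, the product of $(1-z)$ with either bounded piece vanishes, while $(1-z)\cdot L/(1-z)=L$ survives, yielding the claimed limit.

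The main obstacle is purely analytic: one must justify that the two residual sums are bounded (indeed holomorphic) as $z\to1$, so that $(1-z)$ times each of them tends to $0$. This is precisely what the two geometric estimates above supply, and the decisive structural input is the balance $r=s=M+2$: it is the equality of the numbers of upper and lower parameters that makes the Gaussian factor cancel in $a_{-n}$ and keeps the negative tail summable, and that makes $a_l-L$ decay geometrically on the positive side. For unequal parameter counts the argument would break down at exactly this point.
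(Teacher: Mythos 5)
Your proof is correct and follows essentially the same route as the paper's: split the bilateral sum at $l=0$, show the negative tail is holomorphic at $z=1$ using the balanced reflection $a_{-n}=(d_1\cdots d_{M+2}/(c_1\cdots c_{M+2}))^n\prod_i(q/d_i)_n/(q/c_i)_n$ together with the hypothesis, and extract the residue from the positive tail by writing $a_l=L+O(q^l)$ so that only $L/(1-z)$ survives after multiplying by $(1-z)$. Your version just makes the two convergence estimates slightly more explicit than the paper does.
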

\begin{proof}
	The negative power part of the function ${}_{M+2}\psi_{M+2}$ is holomorphic at $z=1$ because it can be rewritten as
	\begin{align}
		\sum_{l\in\mathbb{Z}_{<0}}\frac{(c_1,\ldots,c_{M+2})_l}{(d_1,\ldots,d_{M+2})_l}z^l=\sum_{l\in\mathbb{Z}_{>0}}\frac{(q/d_1,\ldots,q/d_{M+2})_l}{(q/c_1,\ldots,q/c_{M+2})_l}\left(\frac{d_1\cdots d_{M+2}}{c_1\cdots c_{M+2}}z\right)^l.
	\end{align} 
	Thus we have
	\begin{align}
		\lim_{z\to1}(1-z){}_{M+2}\psi_{M+2}\left(
		\begin{array}{c}
			c_1,\ldots,c_{M+2}\\
			d_1,\ldots,d_{M+2}
		\end{array}
		;z\right)=\lim_{z\to1}(1-z)\sum_{l=0}^\infty\frac{(c_{1},\ldots,c_{M+2})_{l}}{(d_{1},\ldots,d_{M+2})_{l}}z^{l}.
	\end{align}
	We can check that the series
	\begin{align}
		\sum_{l=0}^\infty\left(\frac{(c_{1},\ldots,c_{M+2})_{l}}{(d_{1},\ldots,d_{M+2})_{l}}-\frac{(c_{1},\ldots,c_{M+2})_{\infty}}{(d_{1},\ldots,d_{M+2})_{\infty}}\right)z^{l},
	\end{align}
	converges in $|z|<|q^{-1}|$.
	Hence, we have
	\begin{align}
		\notag&\lim_{z\to1}(1-z)\sum_{l=0}^\infty\frac{(c_{1},\ldots,c_{M+2})_{l}}{(d_{1},\ldots,d_{M+2})_{l}}z^{l}\\
		\notag&=\frac{(c_{1},\ldots,c_{M+2})_{\infty}}{(d_{1},\ldots,d_{M+2})_{\infty}}\lim_{z\to1}(1-z)\sum_{l=0}^\infty z^{l}+\lim_{z\to1}(1-z)\sum_{l=0}^\infty\left(\frac{(c_{1},\ldots,c_{M+2})_{l}}{(d_{1},\ldots,d_{M+2})_{l}}-\frac{(c_{1},\ldots,c_{M+2})_{\infty}}{(d_{1},\ldots,d_{M+2})_{\infty}}\right)z^{l}\\
		&=\frac{(c_{1},\ldots,c_{M+2})_{\infty}}{(d_{1},\ldots,d_{M+2})_{\infty}}.
	\end{align}
	Therefore we get the desired equation \eqref{respsi}.
\end{proof}
\begin{prop}\label{propintsolEM}
	Suppose $\tau\in\{q/a_{2},\ldots,q/a_{M+3},q/(Ax)\}$, $B/A\notin q^{\mathbb{Z}}$ {and $A a_2\cdots a_{M+3}=q^2 Bb_2\cdots b_{M+3}$}.
	We have
	\begin{align}
		E_M \int_0^{\tau\infty}\frac{(Axt)_\infty}{(Bxt)_\infty}\prod_{i=2}^{M+3}\frac{(a_it)_\infty}{(b_it)_\infty}d_qt=-\prod_{i=0}^{M-1}(B-Aq^i)\cdot q(1-q)x^{M+1}.
	\end{align}
	In particular, we have
	\begin{align}
		E_M \int_{\tau_1}^{\tau_2}\frac{(Axt)_\infty}{(Bxt)_\infty}\prod_{i=2}^{M+3}\frac{(a_it)_\infty}{(b_it)_\infty}d_qt=0,
	\end{align}
	where $\tau_1,\tau_2\in\{q/a_{2},\ldots,q/a_{M+3},q/(Ax)\}$.
\end{prop}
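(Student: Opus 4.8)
The plan is to read off the two coefficients in the pseudo-constant identity \eqref{pseudo}. By that identity together with the Remark following it, for each admissible endpoint one has
\begin{align*}
	E_M\int_0^{\tau\infty}\frac{(Axt)_\infty}{(Bxt)_\infty}\prod_{i=2}^{M+3}\frac{(a_it)_\infty}{(b_it)_\infty}d_qt=C_1\,\frac{\theta(Ax\tau/q)}{\theta(Bx\tau)}+C_2\,x^{M+1},
\end{align*}
with $C_1,C_2$ constant, so the statement amounts to showing $C_1=0$ and $C_2=-\prod_{i=0}^{M-1}(B-Aq^i)q(1-q)$ for every $\tau\in\{q/a_2,\dots,q/a_{M+3},q/(Ax)\}$. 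A preliminary observation I would record is that each of these endpoints truncates the bilateral sum to a one-sided one: at $t=\tau q^{n}$ the relevant numerator factor equals $(q^{n+1})_\infty$, which vanishes for $n\le-1$, hence $\int_0^{\tau\infty}=\int_0^{\tau}$.

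I would settle the value of $C_2$ and the vanishing of $C_1$ simultaneously on the endpoint $\tau=q/(Ax)$, which is by far the cleanest. There $\theta(Ax\tau/q)=\theta(1)=0$ while $\theta(Bx\tau)=\theta(qB/A)\ne0$ by the hypothesis $B/A\notin q^{\mathbb{Z}}$, so the theta term drops out and $E_M\int_0^{q/(Ax)}=C_2x^{M+1}$ outright. On the lattice $t=q^{n+1}/(Ax)$ the factor $(Axt)_\infty/(Bxt)_\infty$ reduces to the $x$-free quantity $(q^{n+1})_\infty/(Bq^{n+1}/A)_\infty$, so the whole integral is $\gamma/x+O(x^{-2})$ as $x\to\infty$, with $\gamma$ a ${}_1\varphi_0$-sum evaluated by the $q$-binomial theorem to $\gamma=q(1-q)/(A-Bq)$. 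Among the terms of $E_M$ only the leading one $x^{M+2}T_x^{-1}\prod_{i=0}^{M}(B-Aq^iT_x)$ can turn a function of order $x^{-1}$ into order $x^{M+1}$; applying it to $\gamma/x$ produces $\gamma q(B-Aq^{-1})\prod_{i=0}^{M-1}(B-Aq^i)x^{M+1}$, which upon inserting the value of $\gamma$ collapses to the asserted $-\prod_{i=0}^{M-1}(B-Aq^i)q(1-q)x^{M+1}$.

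For the endpoints $\tau=q/a_j$ the theta term is no longer annihilated for free, and this is where Lemma \ref{lemmarespsi} is used. The solution $\theta(Ax\tau/q)/\theta(Bx\tau)$ has a simple pole at $x=a_j/B$, which is matched by the poles of $\int_0^{q/a_j}$; since those poles come only from $1/(Bxt)_\infty$, I would compute $C_1$ as the quotient of the residue of $E_M\int_0^{q/a_j}$ at $x=a_j/B$ by the residue of the theta quotient there. Writing $E_M=\sum_\nu p_\nu(x)T_x^\nu$, this residue is a weighted sum of residues of the integral at the shifted points $q^\nu a_j/B$; each of them is the residue of a bilateral series and is given in closed form by Lemma \ref{lemmarespsi}, and the resulting combination is forced to telescope to zero by the balancing relation $Aa_2\cdots a_{M+3}=q^2Bb_2\cdots b_{M+3}$, yielding $C_1=0$. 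The value of $C_2$ is then seen to agree with the one found above: the difference of any two of these integrals is a Jackson integral over a bounded $q$-cycle whose endpoints are zeros of the integrand, so the telescoping boundary terms vanish and $E_M$ annihilates it, forcing all the $C_2$ to coincide with $-\prod_{i=0}^{M-1}(B-Aq^i)q(1-q)$.

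The ``in particular'' clause is then immediate: subtracting the main identity for $\tau_2$ from the one for $\tau_1$ cancels the common right-hand side and leaves $E_M\int_{\tau_1}^{\tau_2}(\cdots)d_qt=0$. The step I expect to be the real obstacle is the residue cancellation for the endpoints $\tau=q/a_j$: one must organize the several shifted residues generated by $E_M$, evaluate each through Lemma \ref{lemmarespsi}, and check that the balancing relation makes their weighted sum vanish; by contrast the $\tau=q/(Ax)$ computation and the deduction of the ``in particular'' clause are routine.
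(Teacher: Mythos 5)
Your overall skeleton (start from \eqref{pseudo}, determine the two coefficients, and handle $\tau=q/(Ax)$ by the asymptotics at $x=\infty$) matches the paper, and your computation of $C_2$ at that endpoint is correct. The genuine gap is at the endpoints $\tau=q/a_j$: you never compute $C_2$ there. The substitute you offer --- that $E_M$ annihilates $\int_{\tau_1}^{\tau_2}$ because ``the telescoping boundary terms vanish'' --- is circular. There is no $q$-exact representation of $E_M\psi$ in $t$: if there were, $E_M$ would kill $\int_0^{\tau\infty}\psi\,d_qt$ for \emph{every} bilateral cycle, contradicting both the nonzero right-hand side of the proposition and \eqref{pseudo}, where generically $C_1\neq0$. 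The only exactness available is for the full operator \eqref{eqJPgeneral}, and that only yields the starting point \eqref{pseudo}; asserting that all the $C_2$'s coincide is exactly equivalent to the ``in particular'' clause you are trying to prove. This is where the paper does the actual work: for $\tau=q/a_j$ it expands $\varphi_\tau$ as a power series in $x$ via the $q$-binomial theorem, writes the coefficients as values of ${}_{M+2}\psi_{M+2}$ at $q^{l+1}$, extracts the coefficient of $x^{M+1}$ under $E_M$ as the finite sum $\sum_{l}(-1)^l[e_l(a)q^{-l}-e_l(b)]\tau^l\,{}_{M+2}\psi_{M+2}(q^l)$, and evaluates it by combining the contiguous relation of ${}_{M+2}\psi_{M+2}$ with the $z\to1$ limit of Lemma \ref{lemmarespsi}. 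That computation cannot be replaced by a soft uniqueness argument.

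Two secondary misidentifications. Lemma \ref{lemmarespsi} concerns $\lim_{z\to1}(1-z){}_{M+2}\psi_{M+2}(z)$ (a residue in the series argument $z$) and enters only in the $C_2$ computation above, not in residues in $x$. Your residue argument for $C_1=0$ at $x=a_j/B$ can be made to work, but not by the mechanism you describe: the poles of $\varphi_{q/a_j}$ sit at $x=a_jq^{-k}/B$ with $k\geq1$, and since $E_M$ contains only $T_x^{\nu}$ with $\nu\geq-1$, the sole contribution to the residue at $a_j/B$ comes from the $T_x^{-1}$-part of $E_M$, whose coefficient is $B^{-1}\prod_{i=2}^{M+3}(Bx-a_i)$ and vanishes at $x=a_j/B$; there is no telescoping and the balancing relation plays no role. (The paper's route is simpler still: $\varphi_{q/a_j}$ is holomorphic near $x=0$ while $\theta(Ax/q)/\theta(Bx)$ has poles accumulating there, forcing $C_1=0$.) Likewise, for $\tau=q/(Ax)$ the step ``$\theta(Ax\tau/q)=\theta(1)=0$, so the theta term drops out'' is not a valid deduction --- the constant-coefficient form of the Remark assumes $\tau$ independent of $x$, and a basis function that is identically zero says nothing about its coefficient; the correct reason is that $\varphi_{q/(Ax)}$ is holomorphic at $x=\infty$ whereas the theta quotient has poles accumulating there.
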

\begin{proof}
	If $\tau=q/a_k$, the integral
	\begin{align}
		\varphi_\tau(x)&=\int_0^{\tau\infty}\frac{(Axt)_\infty}{(Bxt)_\infty}\prod_{i=2}^{M+3}\frac{(a_it)_\infty}{(b_it)_\infty}d_qt\\
		&=\int_0^{\tau}\frac{(Axt)_\infty}{(Bxt)_\infty}\prod_{i=2}^{M+3}\frac{(a_it)_\infty}{(b_it)_\infty}d_qt,
	\end{align}
	is holomorphic at $x=0$.
	Also if $\tau=q/(Ax)$, this integral is holomorphic at $x=\infty$.
	Hence, we have \begin{align}
		E_M\varphi_\tau=C x^{M+1}\label{intremainder}
	\end{align} where $C$ is a constant.
	
	For $\tau=q/a_k$, we have
	\begin{align}
		\notag&\int_0^{\tau}\frac{(Axt)_\infty}{(Bxt)_\infty}\prod_{i=2}^{M+3}\frac{(a_it)_\infty}{(b_it)_\infty}d_qt\\
		\notag&=\sum_{l=0}^\infty \frac{(A/B)_l}{(q)_l}x^l \int_0^{\tau}t^l \prod_{i=2}^{M+3}\frac{(a_it)_\infty}{(b_it)_\infty}d_qt\\
		&=\sum_{l=0}^\infty \frac{(A/B)_l}{(q)_l}x^l(1-q)\tau^{l+1}\prod_{i=2}^{M+3}\frac{(a_i\tau)_\infty}{(b_i\tau)_\infty}{}_{M+2}\psi_{M+2}\left(\begin{array}{c}
			b_2\tau,\ldots,b_{M+3}\tau\\
			a_2\tau,\ldots,a_{M+3}\tau
		\end{array};q^{l+1}\right).
	\end{align}
	due to the $q$-binomial theorem.
	Note that the negative power part of this ${}_{M+2}\psi_{M+2}$ {vanishes} since $\tau=q/a_k$, i.e, this ${}_{M+2}\psi_{M+2}$ can be written by ${}_{M+2}\varphi_{M+1}$.
	From direct calculations, the coefficient $C$ of $x^{M+1}$ in $E_M\varphi_\tau$ is given as
	\begin{align}
		\notag C&=\prod_{i=0}^{M-1}(B-Aq^i)\cdot q(1-q)\prod_{i=2}^{M+3}\frac{(a_i\tau)_\infty}{(b_i\tau)_\infty}\\
		&\times\sum_{l=1}^
		{M+2} (-1)^l[e_l (a) q^{-l}- e_l(b)]\tau^l{}_{M+2}\psi_{M+2}\left(\begin{array}{c}
			b_2\tau,\ldots,b_{M+3}\tau\\
			a_2\tau,\ldots,a_{M+3}\tau
		\end{array};q^l\right).
	\end{align}
	The function ${}_{M+2}\psi_{M+2}$ satisfies the following $q$-difference equation:
	\begin{align}
		\left[\prod_{i=2}^{M+3}(1-a_i\tau q^{-1} T_z)-z\prod_{i=2}^{M+3}(1-b_i\tau T_z)\right]{}_{M+2}\psi_{M+2}\left(\begin{array}{c}
			b_2\tau,\ldots,b_{M+3}\tau\\
			a_2\tau,\ldots,a_{M+3}\tau
		\end{array};z\right)=0.
	\end{align}
	Expanding this equation, we have
	\begin{align}
		\sum_{l=0}^
		{M+2} (-1)^l[e_l (a) q^{-l}- e_l(b)]\tau^l{}_{M+2}\psi_{M+2}\left(\begin{array}{c}
			b_2\tau,\ldots,b_{M+3}\tau\\
			a_2\tau,\ldots,a_{M+3}\tau
		\end{array};q^l z\right)=0.
	\end{align}
	Considering the limit $z\to1$, we finally obtain
	\begin{align}
		\notag C&=-\prod_{i=0}^{M-1}(B-Aq^i)\cdot q(1-q)\prod_{i=2}^{M+3}\frac{(a_i\tau)_\infty}{(b_i\tau)_\infty}\lim_{z\to1}(1-z){}_{M+2}\psi_{M+2}\left(\begin{array}{c}
			b_2\tau,\ldots,b_{M+3}\tau\\
			a_2\tau,\ldots,a_{M+3}\tau
		\end{array};z\right)\\
		&=-\prod_{i=0}^{M-1}(B-Aq^i)\cdot q(1-q).
	\end{align}

	For $\tau=q/(Ax)$, we have
	\begin{align}
		\int_0^\tau\frac{(Axt)_\infty}{(Bxt)_\infty}\prod_{i=2}^{M+3}\frac{(a_it)_\infty}{(b_it)_\infty}d_qt&=\frac{q}{Ax}\int_0^1 \frac{(qt)_\infty}{(qBt/A)_\infty}\prod_{i=2}^{M+3}\frac{(qa_it/(Ax))_\infty}{(qb_it/(Ax))_\infty}d_qt\notag\\
		&=\frac{q}{Ax}\left(\int_0^1\frac{(qt)_\infty}{(qBt/A)_\infty}d_qt+O\left(\frac{1}{x}\right)\right).
	\end{align}
	The last integral is factorized by the $q$-binomial theorem as follows:
	\begin{align}
		\int_0^{1}\frac{(qt)_\infty}{(qBt/A)_\infty}d_q t= (1-q) \frac{(q)_\infty}{(qB/A)_\infty}\frac{(q^2 B/A)_\infty}{(q)_\infty}=\frac{(1-q)q}{1-qB/A}.
	\end{align}
	The degree of the operator $E_M$ in $x$ is $M+2$, thus we have
	\begin{align}
		E_M\varphi_\tau=\frac{q}{A}\frac{(1-q)q}{1-qB/A}q \prod_{i=0}^{M}(B-Aq^iq^{-1})x^{M+1}+O(x^{M}).
	\end{align}
	By the equation \eqref{intremainder}, we get
	\begin{align}
		E_M\varphi_\tau=-\prod_{i=0}^{M-1}(B-Aq^i)\cdot q(1-q)x^{M+1}.
	\end{align}
	This completes the proof.
\end{proof}
\begin{rem}\label{remqRPM1}
	When $M=1$, the equation $E_1 y=0$ is equivalent to the variant of $q$-hypergeometric equation of degree three \cite{HMST}.
	%
	Lemma \ref{lemmarespsi} and Proposition \ref{propintsolEM} for $M=1$ were obtained in \cite{FN}, and integral solutions for the variant were given.
	See also \cite{AT} for the integral solutions.
\end{rem}
{We put $a_1=Ax$, $b_1=Bx$, and for $2\leq i\leq M+3$, $a_i$, $b_i$ are independent from $x$.
Then for any function $f(\{a_i\}_{1\leq i\leq M+3};\{b_i\}_{1\leq i\leq M+3})$, we have
\begin{align}
	T_x f(\{a_i\};\{b_i\})=f(q a_1,\{a_i\}_{2\leq i\leq M+3};qb_1,\{b_i\}_{2\leq i\leq M+3})=T_{a_1}T_{b_1}f(\{a_i\};\{b_i\}).
\end{align}
Therefore we have $T_x=T_{a_1}T_{b_1}$ as an operator.}
We have the following proposition.
\begin{prop}\label{propmultiord}
	{Suppose $a_1\cdots a_{M+3}=q^2b_1\cdots b_{M+3}$.}
	The Jackson integral of the Riemann--Papperitz type $\varphi_{i,j}$ \eqref{intmulti} satisfies the rank $M+1$ equation $\hat{E}_{M}\varphi_{i,j}=0$.
	Here the operator $\hat{E}_{M}=\hat{E}_M(T_{a_1}T_{b_1})$ is given by
	\begin{align}
		\hat{E}_M\notag&=b_{1}^{M+2}T^{-1}\prod_{i=0}^{M}(1-(a_{1}q^i/b_{1})T)\\
		\notag&+\sum_{k=1}^{M+1}(-1)^k b_{1}^{M+2-k}[e_k(\hat{a})T^{-1}-q e_k(\hat{b})]\prod_{i=0}^{M-k}(1-(a_{1}q^i/b_{1})T)\prod_{i=0}^{k-2}(1-q^{-i}T)\\
		&+(-1)^{M}a_2\cdots a_{M+3}T^{-1}\prod_{i=0}^M(1-q^{-i}T),
	\end{align}
	where $T=T_{a_{1}}T_{b_{1}}$ and $\hat{a}=(a_2,\ldots,a_{M+3})$, $\hat{b}=(b_2,\ldots,b_{M+3})$.
\end{prop}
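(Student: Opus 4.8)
The plan is to obtain Proposition~\ref{propmultiord} as a direct corollary of Proposition~\ref{propintsolEM} through the substitution $a_1=Ax$, $b_1=Bx$ introduced just above, under which $T_x=T_{a_1}T_{b_1}=T$. First I would set $\alpha=1$ in the Jordan--Pochhammer integrand, so that $t^{\alpha-1}=1$ and $\frac{(Axt)_\infty}{(Bxt)_\infty}\prod_{i=2}^{M+3}\frac{(a_it)_\infty}{(b_it)_\infty}$ becomes precisely $\prod_{k=1}^{M+3}\frac{(a_kt)_\infty}{(b_kt)_\infty}$, the integrand of $\varphi_{i,j}$ in \eqref{intmulti}. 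The balancing condition $Aa_2\cdots a_{M+3}=q^2Bb_2\cdots b_{M+3}$ required by Proposition~\ref{propintsolEM} then becomes exactly the hypothesis $a_1\cdots a_{M+3}=q^2b_1\cdots b_{M+3}$ of \eqref{relationexponent}, while the genericity condition $B/A\notin q^{\mathbb Z}$ reads $b_1/a_1\notin q^{\mathbb Z}$. Since $q/(Ax)=q/a_1$, the admissible endpoints $\{q/a_2,\ldots,q/a_{M+3},q/(Ax)\}$ coincide with $\{q/a_1,\ldots,q/a_{M+3}\}$, so both limits $q/a_i$, $q/a_j$ of $\varphi_{i,j}$ are allowed. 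Writing $\varphi_{i,j}=\int_0^{q/a_j}-\int_0^{q/a_i}$, the ``in particular'' assertion of Proposition~\ref{propintsolEM} gives $E_M\varphi_{i,j}=0$.

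The substance of the argument is then to verify that $\hat{E}_M$ is merely a nonzero scalar multiple of $E_M$ after this substitution. The two facts I would use are that $a_1/b_1=A/B$ is a constant, giving the factorwise identity $B-Aq^iT_x=B\,(1-(a_1q^i/b_1)T)$, and that $x^{m}=b_1^{m}/B^{m}$ as multiplication operators for every $m$ (because $B$ is a scalar and $b_1=Bx$). Substituting these into each term of $E_M$ from \eqref{defequationEM}: the leading term $x^{M+2}\prod_{i=0}^{M}(B-Aq^iT_x)$ contributes a scalar $B^{-(M+2)}B^{M+1}=B^{-1}$; the $k$-th summand, through $x^{M+2-k}\prod_{i=0}^{M-k}(B-Aq^iT_x)$, contributes $B^{-(M+2-k)}B^{M-k+1}=B^{-1}$; and the last term already carries the explicit factor $a_2\cdots a_{M+3}/B$. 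Using $e_k(a)=e_k(\hat{a})$ and $e_k(b)=e_k(\hat{b})$, every term of $E_M$ is thus $B^{-1}$ times the corresponding term of $\hat{E}_M$, so $E_M=B^{-1}\hat{E}_M$, i.e. $\hat{E}_M=B\,E_M$. I must be careful here that the scalars $B$, $B^{M+1}$, etc. commute freely with $T_x^{-1}$ and with the brackets $[e_k(\hat{a})T^{-1}-qe_k(\hat{b})]$, whereas the powers of $x$ are multiplication operators that do not commute with $T_x$; the point is that these $x$-powers already sit at the far left of each term in both $E_M$ and $\hat{E}_M$, so no reordering past a shift is needed.

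Combining the two steps, $\hat{E}_M\varphi_{i,j}=B\,E_M\varphi_{i,j}=0$, since $B=b_1/x\neq0$. For the rank, since $\hat{E}_M$ and $E_M$ differ only by the nonzero scalar $B$ they have the same rank, and $E_M$ was already identified as an operator of rank $M+1$ in \eqref{defequationEM}; alternatively one reads it off directly, as expanding $\hat{E}_M$ shows that $T=T_{a_1}T_{b_1}$ occurs only in the consecutive powers $T^{-1},T^0,\ldots,T^{M}$, so the relation $\hat{E}_M\varphi_{i,j}=0$ expresses one extreme shift in terms of the remaining $M+1$, giving rank $M+1$. I expect the only delicate point to be the operator bookkeeping of the second paragraph---tracking the powers of $B$ and $x$ uniformly across the three groups of terms---rather than any conceptual difficulty, the whole result being essentially a change of variables applied to Proposition~\ref{propintsolEM}.
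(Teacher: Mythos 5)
Your proposal is correct and follows the paper's own route: the paper's proof is precisely the one-line remark that $\hat{E}_M$ is obtained by rewriting $E_M$ from \eqref{defequationEM} in terms of $a_1=Ax$, $b_1=Bx$ (so that $T_x=T_{a_1}T_{b_1}$), with Proposition \ref{propintsolEM} supplying $E_M\varphi_{i,j}=0$. Your bookkeeping of the powers of $B$, showing $\hat{E}_M=B\,E_M$ term by term, simply makes explicit what the paper leaves to the reader.
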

\begin{proof}
	By rewriting the operator $E_M$ \eqref{defequationEM} in $a_1$, $b_1$, we have the desired equation.
\end{proof}
\begin{rem}
	The operator $T=T_{a_{1}}T_{b_{1}}$ preserves the condition $a_1\cdots a_{M+3}=q^2 b_1\cdots b_{M+3}$ \eqref{relationexponent}.
	Similarly, we can only consider $T_{a_k}T_{a_l}^{-1}$, $T_{b_k}T_{b_l}^{-1}$, $T_{a_k}T_{b_l}$, etc. as $q$-shift operators under that condition.
\end{rem}
%
Proposition \ref{propmultiord} means that the integral $\varphi_{i,j}$ satisfies a $q$-difference equation of rank $M+1$.
Proposition \eqref{propmulti3term} given below leads to a multivariable $q$-difference system of rank $M+1$ satisfied by the integral.
\begin{prop}\label{propmulti3term}
	We suppose $T_{a_i}\tau,T_{b_i}\tau\in\tau q^\mathbb{Z}=\{\tau q^n\mid n\in\mathbb{Z}\}$.
	{We put
	\begin{align}\label{integrandpsi}
		&\psi=\prod_{i=1}^{M+3}\frac{(a_i t)_{\infty}}{(b_i t)_{\infty}}.
\end{align}}
	The integral
	\begin{align}
		&y=\int_0^{\tau\infty}\psi d_qt,\label{intpropint}\\
	\end{align}
	satisfies the following three term relations:
	\begin{align}
		\label{int3term1}&[(a_{1}-a_k q)T_{a_k}T_{a_l}^{-1}-(a_l-a_k q)T_{a_k}T_{a_{1}}^{-1}+(a_l-a_{1})]y=0,\\
		&[(b_{1}-a_{1})T_{a_{1}}T_{a_k}^{-1}-(q^{-1}a_k-a_{1})T_{a_{1}}T_{b_{1}}+(q^{-1}a_k-b_{1})]y=0,\\
		&[(b_{1}-q^{-1}b_l)T_{b_k}T_{b_l}^{-1}-(b_k-q^{-1}b_l)T_{b_{1}}T_{b_l}^{-1}+(b_k-b_{1})]y=0,\\
		&[(a_{1}-b_{1})T_{b_k}T_{b_{1}}^{-1}-(b_k q-b_{1})(T_{a_{1}}T_{b_{1}})^{-1}+(b_k q-a_{1})]y=0,\\
		&[(b_{1}-a_k)T_{a_k}T_{b_l}-(b_l-a_k)T_{a_k}T_{b_{1}}+(b_l-b_{1})]{y}=0,\\
		&[(b_l-a_1)T_{a_k}^{-1}T_{b_l}^{-1}-(b_l-a_k)T_{a_1}^{-1}T_{b_l}^{-1}+(a_1-a_k)]y=0,
	\end{align}
	where $2\leq k\neq l\leq M+3$.
	Also the integral \eqref{intpropint} satisfies
	\begin{align}
		&\left(q\prod_{k=1}^{M+3}T_{a_k}T_{b_k}-1\right)y=0.
	\end{align}
\end{prop}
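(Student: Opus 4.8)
The plan is to show that each of the six three-term relations already holds at the level of the \emph{integrand} $\psi$, so that they descend to $y$ by linearity of the Jackson integral, while the last relation is the one genuine consequence of the bilateral structure of $\int_0^{\tau\infty}$. The hypothesis $T_{a_i}\tau,T_{b_i}\tau\in\tau q^{\mathbb{Z}}$ is used exactly once, at the outset: it guarantees that every $q$-shift operator appearing in the statement, and every composition of them, maps the lattice $\tau q^{\mathbb{Z}}$ onto itself, so the set $\{\tau q^n\}_{n\in\mathbb{Z}}$ underlying the definition of the integral is preserved. Consequently, for any such operator $T$ one may pull it under the integral sign, $Ty=\int_0^{\tau\infty}(T\psi)\,d_qt$, and the problem reduces to computing $T\psi$.

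First I would record the elementary action of the generators on $\psi$. Since $(a_kt)_\infty=(1-a_kt)(qa_kt)_\infty$, and similarly for $b_k$, one obtains the four multiplier formulas
\begin{align}
	T_{a_k}\psi=\frac{\psi}{1-a_kt},\quad T_{a_k}^{-1}\psi=(1-a_kt/q)\psi,\quad T_{b_k}\psi=(1-b_kt)\psi,\quad T_{b_k}^{-1}\psi=\frac{\psi}{1-b_kt/q}.
\end{align}
Each composite operator in \eqref{int3term1} and the following relations therefore multiplies $\psi$ by a rational function of $t$ whose only pole is a single factor ($1-a_kt$ or $1-b_lt/q$). For each relation I would place the three resulting multipliers over this common denominator; the numerator is then a polynomial of degree at most one in $t$. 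The claim is that, with the coefficients displayed in the statement, this numerator vanishes identically. Verifying this is a short direct check: one confirms that both the constant term and the coefficient of $t$ cancel, which for \eqref{int3term1} amounts to matching the $t^0$ and $t^1$ parts of $(a_1-a_kq)(1-a_lt/q)-(a_l-a_kq)(1-a_1t/q)+(a_l-a_1)(1-a_kt)$, and analogously for the other five. Thus each operator annihilates $\psi$ pointwise, hence annihilates $y$.

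The final relation is the only one requiring the two-sided nature of the integral. Because $\psi$ depends on the parameters only through the products $a_it$ and $b_it$, the simultaneous shift $\prod_{k=1}^{M+3}T_{a_k}T_{b_k}$ acts as the dilation $t\mapsto qt$, that is $\bigl(\prod_{k}T_{a_k}T_{b_k}\bigr)\psi(t)=\psi(qt)$. It then suffices to invoke the scaling property of the bilateral Jackson integral,
\begin{align}
	\int_0^{\tau\infty}\psi(qt)\,d_qt=(1-q)\sum_{n\in\mathbb{Z}}\psi(\tau q^{n+1})\tau q^n=q^{-1}\int_0^{\tau\infty}\psi(t)\,d_qt,
\end{align}
which follows by re-indexing the doubly-infinite sum. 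Hence $\bigl(\prod_k T_{a_k}T_{b_k}\bigr)y=q^{-1}y$, i.e. $\bigl(q\prod_k T_{a_k}T_{b_k}-1\bigr)y=0$.

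The only real obstacle is organizational rather than conceptual: one must make the commutation of the shift operators with the integral rigorous under the lattice hypothesis, and keep careful track of signs and of which factor ($1-a_kt$ versus $1-b_lt/q$) plays the role of common denominator in each of the six cases. Convergence of the bilateral sum, needed both for the scaling step and to justify the interchange, is guaranteed by the standing balancing condition \eqref{relationexponent}; once that is in place, every step is either a finite polynomial identity or an index shift.
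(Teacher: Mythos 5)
Your proposal is correct and follows essentially the same route as the paper: each three-term relation is verified as a rational-function identity satisfied by the integrand $\psi$ (the paper writes out only \eqref{int3term1} explicitly, exactly via the common-denominator/degree-one cancellation you describe) and then integrated, while the last relation comes from $T_t\psi=\bigl(\prod_k T_{a_k}T_{b_k}\bigr)\psi$ together with the re-indexing identity $\int_0^{\tau\infty}T_t f\,d_qt=q^{-1}\int_0^{\tau\infty}f\,d_qt$, justified by the lattice hypothesis on $\tau$. No substantive difference.
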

\begin{proof}
	We only prove the equation \eqref{int3term1} for the three term relations.
	Other three term relations can be proved in the same way.
	The integrand $\psi$ given in \eqref{integrandpsi} satisfies 
	\begin{align}
		[(a_{1}-a_k q)T_{a_k}T_{a_l}^{-1}-(a_l-a_k q)T_{a_k}T_{a_{1}}^{-1}+(a_l-a_{1})]\psi=0.
	\end{align}
	This equation is derived by the relation for rational functions of $t$:
	\begin{align}
		(a_{1}-a_k q)\frac{1-a_l q^{-1}t}{1-a_k t}-(a_l-a_k q)\frac{1-a_{1} q^{-1}t}{1-a_kt}+(a_l-a_{1})=0.
	\end{align}
	By integrating in $t$, we have the relation \eqref{int3term1}.
	Also the integrand $\psi$ satisfies
	\begin{align}
		T_t\psi=\left(\prod_{k=1}^{M+3}T_{a_k}T_{b_k}\right)\psi,
	\end{align}
	hence we have
	\begin{align}
		q^{-1}y=\left(\prod_{k=1}^{M+3}T_{a_k}T_{b_k}\right)y.
	\end{align}
	Here we used the following type formula \cite{FN}:
	\begin{align}
		\int_0^{\tau\infty}T_t^i T_x^j f(t,x)d_qt=q^{-i}T_x^j \int_0^{\tau\infty}f(t,x)d_qt,
	\end{align}
	if $T_x\tau=q^l\tau$ $(l\in\mathbb{Z})$.
	This can be checked directly:
	\begin{align}
		\int_0^{\tau\infty}T_t^i T_x^j f(t,x)d_qt&=(1-q)\sum_{n\in\mathbb{Z}}f(q^i \tau q^n,T_x^j x)\tau q^n\\
		&=(1-q)\sum_{n\in\mathbb{Z}}f(q^{i+n-jl} T_x^j \tau,T_x^j x)(T_x^j\tau) q^{i+n-jl}q^{-i}\\
		&=q^{-i} T_x^j(1-q)\sum_{m\in\mathbb{Z}}f(q^m \tau,x)\tau q^m\\
		&=q^{-i}T_x^j \int_0^{\tau\infty}f(t,x)d_qt.
	\end{align}
\end{proof}


In conclusion, the integral $\varphi_{i,j}$ \eqref{intmulti} satisfies the following $q$-difference system.
\begin{thm}\label{thmmultisystem}
	Suppose $a_1\cdots a_{M+3}=q^2b_1\cdots b_{M+3}$.
	The Jackson integral of Riemann--Papperitz type $\varphi_{i,j}$ \eqref{intmulti} satisfies the following system:
	\begin{align}
		&\hat{E}_M y=0,\\
		\label{sys3term1}&[(a_{1}-a_k q)T_{a_k}T_{a_l}^{-1}-(a_l-a_k q)T_{a_k}T_{a_{1}}^{-1}+(a_l-a_{1})]y=0,\\
		&[(b_{1}-a_{1})T_{a_{1}}T_{a_k}^{-1}-(q^{-1}a_k-a_{1})T_{a_{1}}T_{b_{1}}+(q^{-1}a_k-b_{1})]y=0,\\
		&[(b_{1}-q^{-1}b_l)T_{b_k}T_{b_l}^{-1}-(b_k-q^{-1}b_l)T_{b_{1}}T_{b_l}^{-1}+(b_k-b_{1})]y=0,\\
		&[(a_{1}-b_{1})T_{b_k}T_{b_{1}}^{-1}-(b_k q-b_{1})(T_{a_{1}}T_{b_{1}})^{-1}+(b_k q-a_{1})]y=0,\\
		&[(b_{1}-a_k)T_{a_k}T_{b_l}-(b_l-a_k)T_{a_k}T_{b_{1}}+(b_l-b_{1})]y=0,\\
		\label{sys3term5}&[(b_l-a_1)T_{a_k}^{-1}T_{b_l}^{-1}-(b_l-a_k)T_{a_1}^{-1}T_{b_l}^{-1}+(a_1-a_k)]y=0,
		\\
		&\left(q\prod_{k=1}^{M+3}T_{a_k}T_{b_k}-1\right)y=0,
	\end{align}
	where $2\leq k\neq l\leq M+3$.
	We call this system $q$-$RP^M$.
	The rank of $q$-$RP^M$ is $M+1$.
\end{thm}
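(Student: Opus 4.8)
The plan is to split the assertion into two parts: that $\varphi_{i,j}$ is annihilated by each listed operator, and that the rank of the full system equals $M+1$. The first part needs no new computation: Proposition \ref{propmultiord} gives $\hat{E}_M\varphi_{i,j}=0$, while Proposition \ref{propmulti3term}, applied to the cycles defining $\varphi_{i,j}$, supplies the six three-term relations \eqref{sys3term1}--\eqref{sys3term5} together with the total-shift relation $(q\prod_k T_{a_k}T_{b_k}-1)y=0$. So I would devote the proof almost entirely to the rank computation.

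For the upper bound (rank $\le M+1$), I would show that every generator $T_j^{\,i}y$ of the span lies in the $F$-span of $\{y,Ty,\dots,T^{M}y\}$ with $T=T_{a_1}T_{b_1}$. The idea is to use the three-term relations to remove every shift direction transverse to $T$: a relation such as \eqref{sys3term1} rewrites a mixed shift $T_{a_k}T_{a_l}^{-1}y$ as an $F$-combination of two shifts nearer to a pure power of $T$, and iterating such substitutions (using the total-shift relation to fix the overall normalization) expresses any monomial $T_j^{\,i}y$ as an $F$-combination of the $\{T^{n}y\}_{n\in\mathbb Z}$. The single equation $\hat{E}_My=0$, being an order-$(M+1)$ recurrence in $T$ with nonvanishing extreme coefficients, then folds $T^{M+1}y$ and $T^{-1}y$, hence all $T^{n}y$, into the $F$-span of $y,Ty,\dots,T^{M}y$. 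This gives rank $\le M+1$.

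For the lower bound (rank $\ge M+1$), I would exhibit $M+1$ solutions of $q$-$RP^M$ that are linearly independent over the field $K$ of pseudo-constants and invoke the contrapositive of Lemma \ref{lemmarank}: were the rank at most $M$, any $M+1$ solutions would be $K$-dependent. The natural candidates are the cycle integrals $\varphi_{i,j}$; although the $M+2$ integrals $\varphi_{1,2},\dots,\varphi_{1,M+3}$ span the solutions and satisfy one $K$-relation, $M+1$ of them remain $K$-independent, which can be detected from their distinct asymptotics via the explicit evaluation of Theorem \ref{thmkajiint} (each cycle carries a different product of infinite $q$-factorials multiplying a ${}_{M+3}\varphi_{M+2}$). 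Combining the two bounds yields rank $=M+1$.

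The hard part will be the upper-bound reduction. The shift operators of the system live in a lattice of rank $2M+5$ (the $2M+6$ variables $a_i,b_i$ modulo the single constraint $a_1\cdots a_{M+3}=q^2b_1\cdots b_{M+3}$), and one must verify that the six families of three-term relations genuinely collapse all of these directions onto the one-dimensional direction $T$, and, more delicately, that this collapse introduces no extra $F$-relation among $y,Ty,\dots,T^{M}y$ that would push the rank below $M+1$. Establishing the $K$-independence of the cycle integrals in the lower bound is the complementary delicate point, as it is precisely what prevents the dimension from collapsing.
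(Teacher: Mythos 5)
Your proposal follows the paper's route for everything the paper actually proves inside this theorem: the first part is, as you say, immediate from Propositions \ref{propmultiord} and \ref{propmulti3term} (applied at $\tau=q/a_i$ and $\tau=q/a_j$), and the paper's entire rank computation is your upper-bound step --- the three-term relations \eqref{sys3term1}--\eqref{sys3term5} rewrite each transverse shift $T_{a_k}T_{a_l}^{-1}y$, $T_{b_k}T_{b_l}^{-1}y$, $T_{a_k}T_{b_l}y$ as an $F$-combination of powers of $T=T_{a_1}T_{b_1}$ applied to $y$, and $\hat{E}_M$ is an ordinary operator of order $M+1$ in $T$. The paper stops there and is just as terse as you are about verifying that the six families really collapse the whole shift lattice; it does not prove a lower bound within this proof, so your third step is an addition rather than a divergence. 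The needed independence does appear in the paper, but only later (Theorem \ref{thmindependence}) and by a different device: instead of extracting distinct asymptotics from the evaluation of Theorem \ref{thmkajiint}, the paper specializes $a_i=b_i$ for $2\le i\le M+3$ and $b_1=x$, under which each $\varphi_{i,M+3}$ becomes the explicit rational function $(1-q)q(a_i-a_{M+3})/\bigl((a_i-xq)(a_{M+3}-xq)\bigr)$, and then a Vandermonde-determinant argument gives $K$-independence. I would recommend that specialization over your asymptotic argument: independence over the field $K$ of pseudo-constants is delicate to read off from infinite-product prefactors, since elements of $K$ are theta-quotients that can absorb exactly such factors, whereas the rational specialization reduces the question to linear algebra. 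On the other hand, your write-up is more candid than the paper's about the logical need for the lower bound to upgrade rank $\le M+1$ to rank $=M+1$.
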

\begin{proof}
	We already proved that the integral $\varphi_{i,j}$ satisfies the system $q$-$RP^M$.
	We will prove that the rank of $q$-$RP^M$ is $M+1$.
	Using the three term relations \eqref{sys3term1}--\eqref{sys3term5}, the functions $T_{a_k}T_{a_l}^{-1}y$, $T_{b_k}T_{b_l}^{-1}y$, $T_{a_k}T_{b_l}y$ can be rewritten as linear combinations of $y$, $(T_{a_{1}}T_{b_{1}})^{\pm1}y$, $(T_{a_{1}}T_{b_{1}})^{\pm2}y$, ...
	The operator $\hat{E}_M$ is an ordinary $q$-difference operator of rank $M+1$, therefore the rank of $q$-$RP^M$ is $M+1$.
\end{proof}
\begin{rem}\label{remqRPM2}
	If $M=1$, the equation $E_1y=0$ can be regarded as a $q$-analog of the Riemann--Papperitz system (see \cite{FN}).
	Here the Riemann--Papperitz equation is a second order Fuchsian differential equation with three singularities $\{t_1,t_2,t_3\}$ on the Riemann sphere $\mathbb{P}^1=\mathbb{C}\cup\{\infty\}$.
	The above system is an extension of the $q$-Riemann--Papperitz system, hence we call it $q$-$RP^M$.
\end{rem}
As a corollary of Theorem \ref{thmkajiint} and \ref{thmmultisystem}, we have the following:
\begin{cor}\label{corKajiharaeq}
	The function $W\left(\begin{array}{c}
		\{a_i\}_{1\leq i\leq M+3}\\
		\{b_i\}_{1\leq i\leq M+3}
	\end{array}\right)$ given in \eqref{defW} satisfies the system $q$-$RP^M$.
\end{cor}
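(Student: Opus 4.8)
The plan is to recognize $W$ as a scalar multiple of one of the Jackson integrals $\varphi_{i,j}$ of Riemann--Papperitz type already treated in Theorem \ref{thmmultisystem}, and then to exploit the homogeneity of the system $q$-$RP^M$.

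First I would invoke the integral representation \eqref{inttoW}, a consequence of Theorem \ref{thmkajiint}, which expresses $W$ as
\begin{align}
	W\left(\begin{array}{c}
		\{a_i\}_{1\leq i\leq M+3}\\
		\{b_i\}_{1\leq i\leq M+3}
	\end{array}\right)=\frac{1}{q(1-q)(q)_\infty}\,\varphi_{M+3,M+2},
\end{align}
under the same constraint $a_1\cdots a_{M+3}=q^2 b_1\cdots b_{M+3}$ that underlies $q$-$RP^M$. Thus $W=C\,\varphi_{M+3,M+2}$ with the scalar $C=1/(q(1-q)(q)_\infty)$, where $\varphi_{M+3,M+2}$ is the integral \eqref{intmulti} with $i=M+3$, $j=M+2$.

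Next I would observe that $C$ depends only on the fixed base $q$ and on none of the variables $a_i$, $b_i$; hence $T_{a_k}C=C$ and $T_{b_k}C=C$ for every $k$, so $C$ is a pseudo-constant for all of the $q$-shift operators $T_{a_k}T_{a_l}^{-1}$, $T_{b_k}T_{b_l}^{-1}$, $T_{a_k}T_{b_l}$, $T_{a_1}T_{b_1}$, and $\prod_{k}T_{a_k}T_{b_k}$ appearing in the system. By Theorem \ref{thmmultisystem}, $\varphi_{M+3,M+2}$ satisfies every equation of $q$-$RP^M$; since each equation is homogeneous and linear in $y$, and since $C$ commutes with every operator involved, multiplying the solution $\varphi_{M+3,M+2}$ by $C$ again produces a solution. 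Therefore $W$ satisfies each equation of the system $q$-$RP^M$.

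I expect essentially no obstacle here: the entire content of the corollary lies in the integral representation \eqref{inttoW} supplied by Theorem \ref{thmkajiint}. Once $W$ is identified with the scalar multiple $C\,\varphi_{M+3,M+2}$, the claim follows at once from the homogeneity of $q$-$RP^M$ together with the fact that the proportionality constant $C$ is independent of all the variables that the $q$-shift operators act upon. The only point that even merits a remark is this last independence, and it is immediate since $C$ is a function of $q$ alone.
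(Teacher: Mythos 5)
Your proposal is correct and follows exactly the paper's own (one-line) argument: invoke the integral representation \eqref{inttoW} to identify $W$ with the constant multiple $\frac{1}{q(1-q)(q)_\infty}\varphi_{M+3,M+2}$ and conclude from Theorem \ref{thmmultisystem} by linearity. The extra remark that the proportionality factor depends only on $q$ is a sound (if routine) elaboration of the same proof.
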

\begin{proof}
	The function $W\left(\begin{array}{c}
		\{a_i\}_{1\leq i\leq M+3}\\
		\{b_i\}_{1\leq i\leq M+3}
	\end{array}\right)$ has the integral representation \eqref{inttoW}.
\end{proof}

Next, we give a basis of the space of solutions for the system $q$-$RP^M$,
\begin{thm}\label{thmindependence}
	{For generic $\{a_i\}_{1\leq i\leq M+3}$ and $\{b_i\}_{1\leq i\leq M+3}$ with $a_1\cdots a_{M+3}=q^2b_1\cdots b_{M+3}$,} the integrals $\varphi_{2,M+3},\varphi_{3,M+3},\ldots,\varphi_{M+2,M+3}$ are basis of the space of solutions for $q$-$RP^{M}$ over $K=\{C\mid T_{a_k}T_{a_l}^{-1}C=T_{b_k}T_{b_l}^{-1}C=T_{a_k}T_{b_l}C=C\}$.
%
\end{thm}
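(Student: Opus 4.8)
The plan is to combine the rank bound of Theorem \ref{thmmultisystem} with an explicit analysis of the endpoint behavior of the integrals. First, since $q$-$RP^M$ has rank $M+1$, Lemma \ref{lemmarank} shows that any $M+2$ of its solutions are linearly dependent over $K$; in particular the solution space has dimension at most $M+1$ over $K$. Each $\varphi_{i,M+3}$ is a solution by Theorem \ref{thmmultisystem}, and there are exactly $M+1$ of them ($i=2,\ldots,M+2$), so it suffices to prove that $\varphi_{2,M+3},\ldots,\varphi_{M+2,M+3}$ are linearly independent over $K$. Equivalently, since $\hat E_M$ is an \emph{ordinary} $q$-difference operator of order $M+1$ in the single shift $T=T_{a_1}T_{b_1}=T_x$ by Proposition \ref{propmultiord}, and $K$ lies in the field of $T_x$-invariants, it is enough to rule out a relation $\sum_{i=2}^{M+2}C_i\varphi_{i,M+3}=0$ with $C_i\in K$ not all zero.

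To rule this out I would isolate each $\varphi_{i,M+3}$ by a residue in the parameters, resolving the constraint \eqref{relationexponent} by regarding $b_{M+3}$ as a function of the remaining, free, parameters. Writing $\varphi_{i,M+3}=\int_0^{q/a_{M+3}}\psi\,d_qt-\int_0^{q/a_i}\psi\,d_qt$ with $\psi$ as in \eqref{integrandpsi}, and expanding $\int_0^{q/a_i}\psi\,d_qt=(1-q)\sum_{n\ge0}\psi(q^{n+1}/a_i)\,q^{n+1}/a_i$, one sees that this truncated integral develops a simple pole along $a_i=b_kq$ (for each free $b_k$, $k=1,\ldots,M+2$), arising solely from the $n=0$ term through the factor $1/(b_kq/a_i)_\infty$. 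For generic parameters neither $\int_0^{q/a_{M+3}}\psi\,d_qt$ nor $\int_0^{q/a_{i'}}\psi\,d_qt$ with $i'\ne i$ is singular there, so the functional $\operatorname{Res}_{a_i=b_kq}$ annihilates $\varphi_{i',M+3}$ for $i'\ne i$ while producing an explicit nonzero residue $R_i^{(k)}$ from $\varphi_{i,M+3}$. Applying $\operatorname{Res}_{a_i=b_kq}$ to the putative relation, and using that the pseudo-constants $C_{i'}$ are regular on this generic locus, gives $C_i\,R_i^{(k)}=0$, hence $C_i$ vanishes identically along $a_i=b_kq$; the shift-invariance defining $K$ then propagates this vanishing to the whole orbit $a_i\in b_kq^{\mathbb Z}$, for every $k=1,\ldots,M+2$.

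The hard part is the final step: concluding $C_i\equiv0$ from these loci. A pseudo-constant in $K$ is a quasi-periodic (theta-type) function, and vanishing on a single orbit $a_i\in b_kq^{\mathbb Z}$ is consistent with $C_i$ carrying a factor $\theta(a_i/b_k)$, so no single residue suffices. What should force $C_i\equiv0$ is that the automorphy factor of any element of $K$ is pinned down by the defining shift relations, so vanishing simultaneously on all $M+2$ independent theta-divisors exceeds the number of zeros such a pseudo-constant can have. I would make this precise by the elliptic-function counting already used in the Remark following Proposition \ref{propintsolEM} (setting $q=e^{2\pi\sqrt{-1}p}$ and comparing the order of the resulting elliptic function with its zeros), or, equivalently, by recasting the whole statement as non-vanishing of the Casorati determinant $\det\big(T_x^{\,j}\varphi_{i,M+3}\big)_{0\le j\le M,\,2\le i\le M+2}$: this determinant solves a first-order $q$-difference equation whose solution is an explicit product of theta functions and powers times one pseudo-constant, and the residue computations above pin that pseudo-constant to a nonzero value. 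The genericity hypothesis on $\{a_i\},\{b_i\}$ is exactly what guarantees that the residues $R_i^{(k)}$ are nonzero and the competing integrals regular, so that this triangular structure is non-degenerate.
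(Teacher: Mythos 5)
Your first paragraph matches the paper exactly: Lemma \ref{lemmarank} bounds the dimension of the solution space over $K$ by the rank $M+1$, so everything reduces to showing that $\varphi_{2,M+3},\ldots,\varphi_{M+2,M+3}$ are linearly independent over $K$. After that, however, your argument diverges from the paper's and, more importantly, is not complete. The paper does something much more elementary: it observes that it suffices to verify independence at a \emph{special} parameter value, and specializes $a_i=b_i$ for $i=2,\ldots,M+3$, $b_1=x$, $a_1=q^2x$, so that the integrand collapses to $1/((1-xt)(1-qxt))$ and each integral is computed in closed form as the rational function $(1-q)q(a_i-a_{M+3})/((a_i-xq)(a_{M+3}-xq))$. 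A relation $\sum C_i\varphi_{i,M+3}=0$ with $T_x$-invariant $C_i$ then becomes a relation among $M+1$ polynomials $p_i(x)$ of degree $M$ that form a basis of $\{\deg\le M\}$; applying $T_x^k$ for $k=0,\ldots,M$ and invoking the Vandermonde determinant kills all the coefficients. No residue analysis or theta-function counting is needed.

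The gap in your version is the step you yourself label ``the hard part.'' Your residue computation does show (modulo justifying that the $C_{i'}$ are regular along $a_i=b_kq$, which you assume rather than prove) that each $C_i$ vanishes on the orbits $a_i\in b_kq^{\mathbb Z}$, $k=1,\ldots,M+2$. But the conclusion $C_i\equiv 0$ does not follow from the order-counting you invoke: elements of $K$ are invariant only under the \emph{coupled} shifts $T_{a_i}T_{a_l}^{-1}$, $T_{a_k}T_{b_l}$, etc., not under $T_{a_i}$ alone, so $C_i$ restricted to the $a_i$-line with the other variables fixed is not an elliptic function of $a_i$ and no bound on its number of zeros per $q$-annulus is available. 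Ratios of products of theta functions in several of the $a$'s and $b$'s with matching automorphy factors do lie in $K$ and do vanish along theta-divisors of exactly the type you produce, so ruling out such a $C_i$ requires an actual analysis of the automorphy constraints (or the Casorati-determinant computation you mention), neither of which you carry out. As written, the proof is therefore a plausible programme rather than a proof; the paper's specialization trick is the device that makes the independence checkable by a finite linear-algebra computation.
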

\begin{proof}
%
	Due to Lemma \ref{lemmarank}, it is enough to check that $\varphi_{2,M+3},\varphi_{3,M+3},\ldots,\varphi_{M+2,M+3}$ are linearly independent on $K$.
	It is sufficient to prove it in a special case.
	We  put $a_i=b_i$ ($i=2,\ldots,M+3$).
	Also we put $b_1=x$, then we have $a_1=q^2x$.
	By a simple calculation, we have
	\begin{align}
		\varphi_{i,M+3}=\int_{q/a_i}^{q/a_{M+3}}\frac{1}{(1-xt)(1-qxt)}d_q t=\frac{(1-q)q(a_i-a_{M+3})}{(a_i-xq)(a_{M+3}-xq)}.
	\end{align}
	Suppose
	\begin{align}
		C_2 \varphi_{2,M+3}+\cdots+C_{M+2}\varphi_{M+2,M+3}=0,
	\end{align}
	where $C_i\in K$.
	Then every pseudo{-}constant $C_i$ satisfies $T_xC_i=C_i$.
	We have
	\begin{align}
		(C_2,\ldots,C_{M+2})((p_2,\ldots,p_{M+2})^\mathrm{T})=0,
	\end{align}
	where ${v}^{\mathrm{T}}$ is the transpose of $v$ and
	\begin{align}
		p_i=(a_i-a_{M+3})\prod_{\substack{2\leq j\leq M+2\\j\neq i}}(a_j-xq).
	\end{align}
	By the assumption $a_i\neq a_j$, the polynomials $p_i$ of degree $M$ are basis of $\{f(x)\in \mathbb{C}[x]\mid\deg f(x)\leq M\}$.
	Hence there exists a constant matrix $G\in GL(M+1,\mathbb{C})$ such that
	\begin{align}
		(p_2,\ldots,p_{M+2})^\mathrm{T}=G (1,x,\ldots,x^{M})^{\mathrm{T}}.
	\end{align}
	We put $(\tilde{C}_0,\ldots,\tilde{C}_{M})=(C_2,\ldots,C_{M+2})G$, then $\tilde{C}_0,\ldots,\tilde{C}_{M}$ are pseudo-constants for $x$ and 
	\begin{align}
		(\tilde{C}_0,\ldots,\tilde{C}_{M})((1,x,\ldots,x^{M})^{\mathrm{T}})=0.
	\end{align}
	Applying $T_x^k$ ($k=1,\ldots,M$), we have
	\begin{align}
		(\tilde{C}_0,\ldots,\tilde{C}_{M})\begin{pmatrix}
			1&1&\cdots& 1\\
			x&qx&\cdots&q^{M}x\\
			\vdots&\vdots & &\vdots\\
			x^{M}&(qx)^{M}&\cdots&(q^{M}x)^{M}
		\end{pmatrix}=(0,\ldots,0).
	\end{align}
	By the Vandermonde determinant, we have $(\tilde{C}_0,\ldots,\tilde{C}_{M})=(0,\ldots,0)$.
	Since $G$ is invertible, we finally obtain $(C_2,\ldots,C_{M+2})=(0,\ldots,0)$.
\end{proof}

\section{Degeneration to the $q$-Appell--Lauricella system}\label{secdegene}
In this section, we consider a degeneration for the system $q$-$RP^M$.
We show that the degeneration of $q$-$RP^M$ includes the $q$-Appell--Lauricella system \eqref{qALsyscap1}, \eqref{qALsyscap2}.
By taking the same degeneration for Kajihara's $q$-hypergeometric series $W^{M,2}$, we construct some solutions for the $q$-Appell--Lauricella system.
It is well known that the $q$-Appell--Lauricella function
\begin{align}
	\varphi_D\left(\begin{array}{c}
		A;\{B_i\}_{1\leq i\leq M}\\
		C
	\end{array};\{x_i\}_{1\leq i\leq M}\right)=\sum_{l\in(\mathbb{Z}_{\geq0})^M}\frac{(A)_{|l|}}{(C)_{|l|}}\prod_{i=1}^{M}\left(\frac{(B_i)_{l_i}}{(q)_{l_i}}x_i^{l_i}\right)\quad (|x_i|<1),
\end{align}
is a solution of the $q$-Appell--Lauricella system {(cf. \cite[(59)]{Noumi})}.
{We give transformation formulas between $\varphi_D$ and our solutions.}


First, we put $b_{M+3}=q^\lambda a_{M+3}=q^{-n}$ ($n\in\mathbb{Z})$, and consider the limit $a_{M+3}\to\infty$, i.e. $n\to+\infty$.
In the following, we suppose $|q^{\lambda+1}|<1$.
We define a pseudo{-}constant
\begin{align}\label{defC0}
	C_0(t)=t^\lambda\frac{\theta(b_{M+3}t)}{\theta(a_{M+3}t)}.
\end{align}
For $1\leq j\leq M+2$, we have
\begin{align}
	\notag&\lim_{a_{M+3}\to\infty} C_0\left(\frac{q}{a_j}\right)\int_0^{q/a_j}\prod_{i=1}^{M+3}\frac{(a_i t)_\infty}{(b_i t)_\infty}d_qt=\lim_{a_{M+3}\to\infty}\int_0^{q/a_j}C_0(t)\prod_{i=1}^{M+3}\frac{(a_i t)_\infty}{(b_i t)_\infty}d_qt\\
	&=\int_0^{q/a_j}t^\lambda\prod_{i=1}^{M+2}\frac{(a_i t)_\infty}{(b_i t)_\infty}d_qt.\label{just4}
\end{align}
{Here we exchange the limit and integration.
This is justified in Appendix \ref{appA}.}
On the other hand, we have
\begin{align}
	\notag&\int_0^{q/a_{M+3}}\prod_{i=1}^{M+3}\frac{(a_i t)_\infty}{(b_i t)_\infty}d_qt=\frac{q}{a_{M+3}}\int_0^{1}\prod_{i=1}^{M+3}\frac{(a_i qt/a_{M+3})_\infty}{(b_i qt/a_{M+3})_\infty}d_qt\\
	&=\frac{q}{a_{M+3}}\left(\int_0^1\frac{(qt)_\infty}{(q^{\lambda+1}t)_\infty}d_qt+O\left(\frac{1}{a_{M+3}}\right)\right).
\end{align}
According to the condition $|q^{\lambda+1}|<1$, we have
\begin{align}
	\frac{1}{a_{M+3}}C_0\left(\frac{q}{a_j}\right)=\frac{1}{q^{-n}q^{-\lambda}}\left(\frac{q}{a_j}\right)^\lambda\frac{\theta(q^{-n} q/a_j)}{\theta(q^{-n}q^{-\lambda}q/a_j)}=(q^{1+\lambda})^{n}{\left(\frac{q^2}{a_j}\right)^\lambda}\frac{\theta(q/a_j)}{\theta(q^{-\lambda}q/a_j)}\xrightarrow{n\to+\infty}0.
\end{align}
Here we used $\theta(x q^{-1})=-xq^{-1}\theta(x)$.
Hence, we have
\begin{align}
	\lim_{n\to+\infty} C_0\left(\frac{q}{a_j}\right)\int_{q/a_{M+3}}^{q/a_j}\prod_{i=1}^{M+3}\frac{(a_i t)_\infty}{(b_i t)_\infty}d_qt=\int_0^{q/a_j}t^\lambda\prod_{i=1}^{M+2}\frac{(a_i t)_\infty}{(b_i t)_\infty}d_qt.\label{reductionintegral1}
\end{align}
Next we consider the limit $a_{M+2}\to0$ with $b_{M+2}=q^\beta a_{M+2}$. Here $a_1\cdots a_{M+1}=q^{\lambda+\beta+2} b_1\cdots b_{M+1}$.
For $1\leq j\leq M+1$, we easily find
\begin{align}
	\lim_{a_{M+2}\to0}\int_0^{q/a_j}t^\lambda\prod_{i=1}^{M+2}\frac{(a_i t)_\infty}{(b_i t)_\infty}d_qt=\int_0^{q/a_j}t^\lambda\prod_{i=1}^{M+1}\frac{(a_i t)_\infty}{(b_i t)_\infty}d_qt.\label{just5}
\end{align}
{This limit can be checked by Tannery's theorem (see Appendix \ref{appA}).}

We consider the same degenerations for the system $q$-$RP^M$.
First we put $b_{M+3}=q^\lambda a_{M+3}$.
We put $z=C_0(\tau)y$ ($\tau=q/a_1,\ldots,q/a_{M+2}$), and we should consider the degeneration $a_{M+3}\to\infty$ for the system associated with $z$.
The function $C_0(\tau)$, given by \eqref{defC0}, is a pseudo-constant for $a_i$, $b_i$ ($1\leq i\leq M+2$), and 
\begin{align}
	C_0(\tau)T_{a_{M+3}}T_{b_{M+3}}y=q^\lambda T_{a_{M+3}}T_{b_{M+3}}C_0(\tau)y.
\end{align}
Hence, the function $z$ satisfies
\begin{align}
	&\hat{E}_M z=0,\\
	&[(a_{1}-a_k q)T_{a_k}T_{a_l}^{-1}-(a_l-a_k q)T_{a_k}T_{a_{1}}^{-1}+(a_l-a_{1})]z=0,\\
	&[(b_{1}-a_{1})T_{a_{1}}T_{a_k}^{-1}-(q^{-1}a_k-a_{1})T_{a_{1}}T_{b_{1}}+(q^{-1}a_k-b_{1})]=0,\\
	&[(b_{1}-q^{-1}b_l)T_{b_k}T_{b_l}^{-1}-(b_k-q^{-1}b_l)T_{b_{1}}T_{b_l}^{-1}+(b_k-b_{1})]z=0,\\
	&[(a_{1}-b_{1})T_{b_k}T_{b_{1}}^{-1}-(b_k q-b_{1})(T_{a_{1}}T_{b_{1}})^{-1}+(b_k q-a_{1})]z=0,\\
	&[(b_{1}-a_k)T_{a_k}T_{b_l}-(b_l-a_k)T_{a_k}T_{b_{1}}+(b_l-b_{1})]z=0,\\
	&[(b_l-a_1)T_{a_k}^{-1}T_{b_l}^{-1}-(b_l-a_k)T_{a_1}^{-1}T_{b_l}^{-1}+(a_1-a_k)]z=0,
	\\
	&\left(q^{\lambda+1}\prod_{k=1}^{M+3}T_{a_k}T_{b_k}-1\right)z=0,
\end{align}
for $2\leq k\neq l\leq M+2$.
By the definition of $\hat{E}_M$, we have
\begin{align}
	\lim_{a_{M+3}\to\infty}\frac{1}{a_{M+3}}\hat{E}_M z=\hat{E}_M'z,
\end{align}
where $\hat{a}'=(a_2,\ldots,a_{M+2})$ $\hat{b}'=(b_2,\ldots,b_{M+2})$, $T=T_{a_1}T_{b_1}$ and
\begin{align}
	\notag\hat{E}_M'&=\sum_{k=1}^{M+1}(-1)^k b_{1}^{M+2-k}[e_{k-1}(\hat{a}')T^{-1}- q^{\lambda+1} e_{k-1}(\hat{b}')]\prod_{n=0}^{M-k}(1-(a_{1}q^n/b_{1})T)\prod_{n=0}^{k-2}(1-q^{-n}T)\\
	&+(-1)^{M}a_2\cdots a_{M+2}T^{-1}\prod_{n=0}^M(1-q^{-n}T).
\end{align}
Note that
\begin{align}
	e_{k}(\hat{a})=e_{k}(\hat{a}')+a_{M+3}e_{k-1}(\hat{a}').
\end{align}
Next we put $b_{M+2}=q^\beta a_{M+2}$, then we easily find
\begin{align}
	\lim_{a_{M+2}\to0}\hat{E}_M'z=b_1 \hat{E}_M''z,
\end{align}
where $\hat{a}''=(a_2,\ldots,a_{M+1})$, $\hat{b}''=(b_2,\ldots,b_{M+1})$,
\begin{align}
	\notag\hat{E}_M''&=\sum_{k=1}^{M+1}(-1)^k b_{1}^{M+1-k}[e_{k-1}(\hat{a}'')T^{-1}- q^{\lambda+1} e_{k-1}(\hat{b}'')]\prod_{n=0}^{M-k}(1-(a_{1}q^n/b_{1})T)\prod_{n=0}^{k-2}(1-q^{-n}T).
\end{align}
\begin{rem}
	When $M=1$, the degenerations $\hat{E}_1\to\hat{E}_1'$, $\hat{E}_1'\to\hat{E}_1''$ were discussed in \cite{HMST}.
	More precisely, {the equation $\hat{E}_1$ is equivalent to a variant of the $q$-hypergeometric equation of degree three,  $\hat{E}_1'$ is equivalent to it of degree two, and $\hat{E}_1''$ is equivalent to Heine's $q$-hypergeometric equation.}
\end{rem}
In addition, three term relations \eqref{sys3term1}--\eqref{sys3term5} are not changed for $2\leq k\neq l\leq M+1$.
Moreover, the $q$-shift operators $T_{a_{M+3}}$, $T_{b_{M+3}}$, $T_{a_{M+2}}$, $T_{b_{M+2}}$ become identities by taking the limits $a_{M+3}\to\infty$, $a_{M+2}\to0$ because
\begin{align}
	\lim_{a_{M+3}\to\infty}f(qa_{M+3})=\lim_{a_{M+3}\to\infty}f(a_{M+3}),\ \lim_{a_{M+2}\to0}f(qa_{M+2})=\lim_{a_{M+2}\to0}f(a_{M+2}),
\end{align}
if these limits converge.
Therefore the system $q$-$RP^M$ degenerates to the following system $q$-$RP^M_{\mathrm{degene}}$:
\begin{align}
	\label{hatem2dash}&\hat{E}_M''z=0,\\
	\label{multidegeneeq1}&[(a_{1}-a_k)T_{a_l}^{-1}-(a_l-a_k )T_{a_{1}}^{-1}+(a_l-a_{1})T_{a_k}^{-1}]z=0,\\
	\label{multidegeneeq4}&[(b_{1}-b_l)T_{b_k}-(b_k-b_l)T_{b_{1}}+(b_k-b_{1})T_{b_l}]z=0,\\
	&[(b_{1}-q^{-1}a_k)T_{b_l}-(b_l-q^{-1}a_k)T_{b_{1}}+(b_l-b_{1})T_{a_k}^{-1}]z=0,\\
	\label{multidegeneeq6}&[(qb_l-a_1)T_{a_k}^{-1}-(qb_l-a_k)T_{a_1}^{-1}+(a_1-a_k)T_{b_l}]z=0,\\
	\label{multidegeneeq7}&\left(q^{\lambda+1}\prod_{k=1}^{M+1}T_{a_k}T_{b_k}-1\right)z=0,
\end{align}
where $2\leq k\neq l\leq M+1$.
As a corollary, the integral 
\begin{align}\label{JORDANPOCHHAMMER}
	\int_0^{q/a_j}t^\lambda\prod_{i=1}^{M+1}\frac{(a_it)_\infty}{(b_it)_\infty}d_qt,
\end{align}
satisfies the above system if $|q^{\lambda+1}|<1$.
\begin{rem}
	The rank of $q$-$RP^M_{\mathrm{degene}}$ is $M+1$.
	This can be proved in the same way as the proof of Theorem \ref{thmmultisystem}.
	In \cite{Ma,Mi1989}, the Pfaff system related to the Jackson integral \eqref{JORDANPOCHHAMMER} was studied.
\end{rem}
\begin{rem}
	The relation \eqref{relationexponent} reduces $a_1\cdots a_{M+1}=q^{\lambda+\beta+2} b_1\cdots b_{M+1}$ by the degenerations.
	Since the parameter $\beta$ is independent from the system $q$-$RP^M_{\mathrm{degene}}$, there is no condition among $a_1,\ldots,a_{M+1},b_1,\ldots,b_{M+1},\lambda$.
\end{rem}
We will check that the system $q$-$RP^M_{\mathrm{degene}}$ is an extension of the $q$-Appell--Lauricella system.
\begin{prop}
	We put $a_{M+1}=q$, $b_{M+1}=C/A$, $q^\lambda=A/q$, $a_i=B_i x_i$, $b_i=x_i$ ($1\leq i\leq M$).
	Then the system $q$-$RP^M_{\mathrm{degene}}$ {implies} the $q$-Appell--Lauricella system
	\begin{align}
		&[x_i (1-T_{x_j})(1-B_i T_{x_i})-x_j(1-T_{x_i})(1-B_j T_{x_j})]z=0,\label{qALsyscap1}\\
		&[(1-T_{x_i})(1-Cq^{-1}T_{x_1}\cdots T_{x_M})-x_i (1-B_i T_{x_i})(1-AT_{x_1}\cdots T_{x_M})]z=0,\label{qALsyscap2}
	\end{align} 
	i.e. if the function $z$ satisfies $q$-$RP^M_{\mathrm{degene}}$, then $z$ is a solution for the $q$-Appell--Lauricella system.
\end{prop}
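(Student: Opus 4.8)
The plan is to substitute the listed values directly into the equations \eqref{hatem2dash}--\eqref{multidegeneeq7} of $q$-$RP^M_{\mathrm{degene}}$ and to re-express every $q$-shift in terms of the new variables $x_1,\dots,x_M$. The key dictionary is that, since $a_i=B_ix_i$ and $b_i=x_i$ depend on the single variable $x_i$, the replacement $x_i\mapsto qx_i$ sends $a_i\mapsto qa_i$ and $b_i\mapsto qb_i$ simultaneously; hence $T_{x_i}$ corresponds to the product $T_{a_i}T_{b_i}$ for $1\le i\le M$. On the frozen index one has $a_{M+1}=q$ and $b_{M+1}=C/A$, and the identification $q^\lambda=A/q$ turns the scalar $q^{\lambda+1}$ in \eqref{multidegeneeq7} into $A$. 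I would record these translations first, together with the coefficient rewrites $x_iB_i=a_i$ and $x_i=b_i$ that convert the operators in \eqref{qALsyscap1}--\eqref{qALsyscap2} into the $a_i,b_i$ used in the degenerate system.

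The obstacle, and the heart of the argument, is that the individual shifts $T_{a_i}$ and $T_{b_i}$ do not descend to the $x_i$-variables: only the products $T_{a_i}T_{b_i}=T_{x_i}$ are well defined on the specialized solution. Accordingly, to reach \eqref{qALsyscap1} I would pair the pure $a$-relation \eqref{multidegeneeq1} with the pure $b$-relation \eqref{multidegeneeq4} and with the two mixed three-term relations, using the elementary identity $T_{b_k}=T_{a_k}^{-1}T_{x_k}$ to trade every bare $b$-shift for a bare $a$-shift composed with $T_{x_k}$, and then using the $a$-relations to eliminate the remaining bare $a$-shifts. Expanding the target, \eqref{qALsyscap1} for $i,j\in\{1,\dots,M\}$ reads
\[
\bigl[(b_i-b_j)+(b_j-a_i)T_{x_i}+(a_j-b_i)T_{x_j}+(a_i-a_j)T_{x_i}T_{x_j}\bigr]z=0,
\]
so the combination must be arranged to leave exactly the four shifts $1,\,T_{x_i},\,T_{x_j},\,T_{x_i}T_{x_j}$ with these coefficients; tracking the coefficient shifts that occur when composing operators in $\mathbb{C}(a,b)[T^{\pm}]$ is the routine but delicate bookkeeping.

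For \eqref{qALsyscap2} I would start from the contraction relation \eqref{multidegeneeq7}, which under the substitution becomes $A\,(T_{a_{M+1}}T_{b_{M+1}})\,T_{x_1}\cdots T_{x_M}z=z$; using $b_{M+1}=C/A$ this lets me rewrite both $A\,T_{x_1}\cdots T_{x_M}$ and $Cq^{-1}T_{x_1}\cdots T_{x_M}$ in terms of the single frozen-index shift $(T_{a_{M+1}}T_{b_{M+1}})^{-1}$. I would then feed this into the mixed three-term relation linking a variable index $i$ to the frozen index $M+1$ (and, if needed, into $\hat{E}_M''$ from \eqref{hatem2dash}), so that the frozen shift cancels and \eqref{qALsyscap2} emerges with the correct coefficients $x_i(1-B_iT_{x_i})=b_i-a_iT_{x_i}$. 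A useful check throughout is that no spurious parameter shift $C\mapsto qC$ or $A\mapsto qA$ survives, and that the resulting system again has rank $M+1$, matching both $q$-$RP^M_{\mathrm{degene}}$ and the $q$-Appell--Lauricella system; as independent confirmation, the explicit Jackson integral \eqref{JORDANPOCHHAMMER} solves the degenerate system, and the same substitution followed by a change of integration variable should identify it with an integral solution of \eqref{qALsyscap1}--\eqref{qALsyscap2}.
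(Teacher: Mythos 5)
Your setup coincides with the paper's opening moves: the dictionary $T_{x_i}=T_{a_i}T_{b_i}$, the identification $q^{\lambda+1}=A$ and $Cq^{-1}=a_{M+1}^{-1}b_{M+1}q^{\lambda+1}$, the use of \eqref{multidegeneeq7} to replace $AT_{x_1}\cdots T_{x_M}$ by $(T_{a_{M+1}}T_{b_{M+1}})^{-1}$ on solutions, and your expansion of \eqref{qALsyscap1} into the four shifts $1,T_{x_i},T_{x_j},T_{x_i}T_{x_j}$ are all correct. The gap is that the decisive step is deferred as ``routine but delicate bookkeeping,'' and the elimination strategy you describe would fail as stated: the three-term relations \eqref{multidegeneeq1}--\eqref{multidegeneeq6} never let you \emph{eliminate} a bare shift $T_{a_k}^{\pm1}$ or $T_{b_l}$ --- each relation only re-expresses one bare shift in terms of two others --- so no sequence of substitutions can by itself produce an identity containing only the products $T_{a_i}T_{b_i}$. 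What is actually needed, and what the paper supplies, is a reduction of every first-order factor to an operator in the two \emph{reference} shifts $T_{a_1}^{-1}$, $T_{b_1}$, namely the identities (valid on solutions, derived from \eqref{multidegeneeq4} and \eqref{multidegeneeq6})
\begin{align*}
(qb_1-a_1)(1-T_{a_i}T_{b_i})z&=T_{a_i}(qb_i-a_i)(T_{a_1}^{-1}-T_{b_1})z,\\
(qb_1-a_1)(b_i-a_iT_{a_i}T_{b_i})z&=T_{a_i}(qb_i-a_i)(b_1T_{a_1}^{-1}-a_1q^{-1}T_{b_1})z.
\end{align*}

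After this reduction the left-hand side of \eqref{qALsyscap2} collapses, up to the invertible prefactor $T_{a_i}T_{a_{M+1}}(qb_i-a_i)(qb_{M+1}-a_{M+1})$, to the difference
\begin{align*}
qa_{M+1}^{-1}(T_{a_1}^{-1}-T_{b_1})(b_1T_{a_1}^{-1}-a_1q^{-1}T_{b_1})-(b_1T_{a_1}^{-1}-a_1q^{-1}T_{b_1})(T_{a_1}^{-1}-T_{b_1}),
\end{align*}
which the paper shows carries the overall scalar factor $(qa_{M+1}^{-1}-1)$; this vanishes \emph{precisely because} $a_{M+1}=q$. Your outline never indicates where the hypothesis $a_{M+1}=q$ enters the argument (it is not needed to turn $q^{\lambda+1}$ into $A$, since $q^{\lambda}=A/q$ already does that), yet it is exactly the condition that annihilates the residual term; without identifying this mechanism the proof of \eqref{qALsyscap2} is incomplete, and the same remark applies to \eqref{qALsyscap1}, where the residual term is the analogous expression with $M+1$ replaced by $j$. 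To complete your argument you would need to state and prove the two displayed reduction identities and then exhibit the vanishing scalar factor explicitly.
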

\begin{proof}
	For $1\leq i\leq M$,  the $q$-shift operator $T_{x_i}$ is rewritten as $T_{x_i}=T_{a_i}T_{b_i}$.
	We have
	\begin{align}
		\notag&[(1-T_{x_i})(1-Cq^{-1}T_{x_1}\cdots T_{x_M})-x_i (1-B_i T_{x_i})(1-AT_{x_1}\cdots T_{x_M})]z\\
		\notag&=[(1-T_{a_i}T_{b_i})(1-a_{M+1}^{-1}b_{M+1}q^{\lambda+1}T_{a_1}T_{b_1}\cdots T_{a_M}T_{b_M})\\
		&\quad-b_i(1-a_ib_i^{-1}T_{a_i}T_{b_i})(1-q^{\lambda+1}T_{a_1}T_{b_1}\cdots T_{a_M}T_{b_M})]z.
	\end{align}
	Using \eqref{multidegeneeq7}, we have
	\begin{align}
		\notag&(1-a_{M+1}^{-1}b_{M+1}q^{\lambda+1}T_{a_1}T_{b_1}\cdots T_{a_{M}}T_{b_M})z=(1-a_{M+1}^{-1}b_{M+1}T_{a_{M+1}}^{-1}T_{b_{M+1}}^{-1})z\\
		&=-T_{a_{M+1}}^{-1}T_{b_{M+1}}^{-1}(a_{M+1}^{-1}(b_{M+1}-a_{M+1}T_{a_{M+1}}T_{b_{M+1}}))z,\\
		&(1-q^{\lambda+1}T_{a_1}T_{b_1}\cdots T_{a_M}T_{b_M})z=-T_{a_{M+1}}^{-1}T_{b_{M+1}}^{-1}(b_{M+1}-a_{M+1}T_{a_{M+1}}T_{b_{M+1}}).
	\end{align}
	By the relations \eqref{multidegeneeq4} and \eqref{multidegeneeq6}, we have
	\begin{align}
		\notag&(qb_1-a_1)(1-T_{a_i}T_{b_i})z=T_{a_i}(qb_1-a_1)(T_{a_i}^{-1}-T_{b_i})z\\
		\notag&=T_{a_i}[(qb_1-a_i)T_{a_1}^{-1}-(a_1-a_i)T_{b_1}-(qb_i-a_1)T_{b_1}-q(b_i-b_1)T_{a_1}^{-1}]z\\
		&=T_{a_i}(qb_i-a_i)(T_{a_1}^{-1}-T_{b_1})z.
	\end{align}
	Similarly we find
	\begin{align}
		(qb_1-a_1)(b_i-a_iT_{a_i}T_{b_i})z=T_{a_i}(qb_i-a_i)(b_1T_{a_1}^{-1}-a_1q^{-1}T_{b_1})z.
	\end{align}
	Therefore we have
	\begin{align}
		\notag&(qb_1-a_1)^2[(1-T_{a_i}T_{b_i})a_{M+1}^{-1}(b_{M+1}-a_{M+1}T_{a_{M+1}}T_{b_{M+1}})-(b_i-a_iT_{a_i}T_{b_i})(1-T_{a_{M+1}}T_{b_{M+1}})]\\
		\notag&=T_{a_i}T_{a_{M+1}}(qb_i-a_i)(qb_{M+1}-a_{M+1})\\
		\notag&\phantom{=}\times[qa_{M+1}^{-1}(T_{a_1}^{-1}-T_{b_1})(b_1T_{a_1}^{-1}-a_1q^{-1}T_{b_1})-(b_1T_{a_1}^{-1}-a_1q^{-1}T_{b_1})(T_{a_1}^{-1}-T_{b_1})]\\
		\notag&=T_{a_i}T_{a_{M+1}}(qb_i-a_i)(qb_{M+1}-a_{M+1})(q^2 a_{M+1}^{-1}-1)(b_1T_{a_1}^{-1}-a_1q^{-1}T_{b_1})(T_{a_1}^{-1}-T_{b_1})\\
		&=(qb_i-qa_i)(qb_{M+1}-qa_{M+1})(q a_{M+1}^{-1}-1)T_{a_i}T_{a_{M+1}}(b_1T_{a_1}^{-1}-a_1q^{-1}T_{b_1})(T_{a_1}^{-1}-T_{b_1}).
	\end{align}
	Since $a_{M+1}=q$ and $qb_1-a_1\neq0$, the function $z$ satisfies
	\begin{align}
		[(1-T_{x_i})(1-Cq^{-1}T_{x_1}\cdots T_{x_M})-x_i (1-B_i T_{x_i})(1-AT_{x_1}\cdots T_{x_M})]z=0.
	\end{align}
	Also we have
	\begin{align}
		\notag&[x_i (1-T_{x_j})(1-B_i T_{x_i})-x_j(1-T_{x_i})(1-B_j T_{x_j})]z\\
		&=[(1-T_{a_j}T_{b_j})(b_i-a_i T_{a_i}T_{b_i})-(1-T_{a_i}T_{b_i})(b_j-a_j T_{a_j}T_{b_j})]z.
	\end{align}
	Similar to the above calculation, we have
	\begin{align}
		[x_i (1-T_{x_j})(1-B_i T_{x_i})-x_j(1-T_{x_i})(1-B_j T_{x_j})]z=0.
	\end{align}
	This completes the proof.
\end{proof}
\begin{rem}
	In the above proof, we did not use the equation \eqref{hatem2dash}.
	In general, the equation \eqref{hatem2dash} may be derived from \eqref{multidegeneeq1}--\eqref{multidegeneeq7}.
\end{rem}
\begin{cor}
	Suppose $\tau\in\{1,q/(B_1x_1),\ldots,q/(B_Mx_M),d\infty\}$, where $d\in\mathbb{C}\backslash\{0\}$ is an arbitrary point.
	{If $|A|<1$ (and $|qB_1B_2\cdots B_M/C|<1$ for $\tau=d\infty$ case),}
	then the integral
	\begin{align}
		\int_0^\tau t^{\alpha-1}\frac{(qt)_\infty}{(Ct/A)_\infty}\prod_{i=1}^{M}\frac{(B_ix_it)_\infty}{(x_it)_\infty}d_qt\quad (q^\alpha=A),
	\end{align} 
	is a solution of the $q$-Appell--Lauricella system.
\end{cor}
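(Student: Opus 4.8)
The plan is to read the corollary as the composition of two facts already in hand: first, that the degenerate Jordan--Pochhammer integral \eqref{JORDANPOCHHAMMER} solves the system $q$-$RP^M_{\mathrm{degene}}$, and second, that $q$-$RP^M_{\mathrm{degene}}$ implies the $q$-Appell--Lauricella system \eqref{qALsyscap1}, \eqref{qALsyscap2} under the specialization $a_{M+1}=q$, $b_{M+1}=C/A$, $q^\lambda=A/q$, $a_i=B_ix_i$, $b_i=x_i$ ($1\le i\le M$) of the preceding proposition. First I would substitute this specialization into \eqref{JORDANPOCHHAMMER}. Since $q^\lambda=A/q=q^{\alpha-1}$ we get $t^\lambda=t^{\alpha-1}$ with $q^\alpha=A$; the $(M{+}1)$-st factor becomes $(qt)_\infty/(Ct/A)_\infty$ and the factors $1\le i\le M$ become $(B_ix_it)_\infty/(x_it)_\infty$, so the integrand matches that of the corollary exactly. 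The endpoints $q/a_j$ become $q/a_{M+1}=1$ and $q/a_i=q/(B_ix_i)$, which are precisely the finite choices $\tau\in\{1,q/(B_1x_1),\ldots,q/(B_Mx_M)\}$. Thus for every finite $\tau$ the corollary's integral is one of the integrals \eqref{JORDANPOCHHAMMER}, which solves $q$-$RP^M_{\mathrm{degene}}$ and hence, by the proposition, the $q$-Appell--Lauricella system.

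For the remaining value $\tau=d\infty$ I would argue that the bilateral integral solves $q$-$RP^M_{\mathrm{degene}}$ directly. The three-term relations \eqref{multidegeneeq1}--\eqref{multidegeneeq6} and the toric relation \eqref{multidegeneeq7} come from local functional relations on the integrand combined with the shift rule of Proposition \ref{propmulti3term}, and these require only that $\tau$ be invariant under the $q$-shifts $T_{a_i},T_{b_i}$; this holds for $\tau=d\infty$ when $d$ is independent of all parameters. For the ordinary equation $\hat{E}_M''z=0$ I would follow the degeneration $\hat{E}_M\to\hat{E}_M'\to b_1\hat{E}_M''$ already computed: the remainder $-\prod_{i=0}^{M-1}(B-Aq^i)\,q(1-q)x^{M+1}$ that appears for a single one-sided integral in Proposition \ref{propintsolEM} is proportional to $\prod(B-Aq^i)$ and, after dividing by $a_{M+3}$ and letting $a_{M+3}\to\infty$ (then $a_{M+2}\to0$), it drops out, so $\hat{E}_M''$ should annihilate the bilateral integral exactly.

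The hypotheses on convergence fall out of the same bookkeeping: the behaviour at $t\to0$ forces $|q^{\lambda+1}|=|A|<1$, while for $\tau=d\infty$ the behaviour at $t\to\infty$ is governed by the ratio of numerator to denominator parameters, $q^{-\alpha}\cdot qB_1\cdots B_M/(C/A)=qB_1\cdots B_M/C$, which must have modulus less than $1$; these are exactly the stated conditions. I expect the main obstacle to be the $\tau=d\infty$ case: one must verify that the remainder term of Proposition \ref{propintsolEM} genuinely vanishes in the degeneration rather than surviving as a nonzero pseudo-constant multiple of $x^{M+1}$, equivalently that the relevant residues of the bilateral integral cancel. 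Once that vanishing is confirmed, the two structural facts combine to give the corollary for all admissible $\tau$.
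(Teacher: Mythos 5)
Your treatment of the finite endpoints $\tau\in\{1,q/(B_1x_1),\ldots,q/(B_Mx_M)\}$ is correct and is exactly the paper's (implicit) argument: specialize $a_{M+1}=q$, $b_{M+1}=C/A$, $q^\lambda=A/q$, $a_i=B_ix_i$, $b_i=x_i$ in \eqref{JORDANPOCHHAMMER}, which solves $q$-$RP^M_{\mathrm{degene}}$, and then invoke the preceding proposition. Your convergence bookkeeping ($|A|=|q^{\lambda+1}|<1$, and $|qB_1\cdots B_M/C|<1$ at infinity) is also right.

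The gap is in the $\tau=d\infty$ case, and you have flagged it yourself without closing it: you reduce the claim to the assertion that $\hat{E}_M''$ annihilates the bilateral integral, i.e.\ that the remainder of Proposition \ref{propintsolEM} ``drops out'' in the degeneration. For a generic base point $d$ the remainder of $\hat{E}_M$ applied to $\int_0^{d\infty}$ is not the explicit multiple of $x^{M+1}$ you quote but an undetermined combination $C_1\theta(Axd/q)/\theta(Bxd)+C_2x^{M+1}$ (Proposition \ref{propintsolEM} computes $C_1=0$ only for the special values of $\tau$), so the vanishing you would need is neither proved in the paper nor by you, and your argument is incomplete as written. The point you are missing is the remark placed immediately before the corollary: the derivation of the $q$-Appell--Lauricella equations \eqref{qALsyscap1}, \eqref{qALsyscap2} from $q$-$RP^M_{\mathrm{degene}}$ uses only the three-term relations and the toric relation \eqref{multidegeneeq1}--\eqref{multidegeneeq7}, and never the ordinary equation \eqref{hatem2dash}. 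Those relations are obtained by integrating local identities of the integrand $t^\lambda\prod_{i=1}^{M+1}(a_it)_\infty/(b_it)_\infty$ exactly as in Proposition \ref{propmulti3term}, and they require only $T_{a_i}\tau,T_{b_i}\tau\in\tau q^{\mathbb{Z}}$ --- a condition satisfied by an arbitrary constant base point $d$. Hence the $\tau=d\infty$ case follows with no statement about $\hat{E}_M''$ at all; replacing your unverified vanishing claim by this observation completes the proof.
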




Finally we consider the degenerations for the series solution $W\left(\begin{array}{c}
	\{a_i\}_{1\leq i\leq M+3}\\\{b_i\}_{1\leq i\leq M+3}
\end{array}\right)$ of the system $q$-$RP^M$.
We recall the definition of $W$ \eqref{defW}:
{
\begin{align}
	\notag&W\left(\begin{array}{c}
		\{a_i\}_{1\leq i\leq M+3}\\
		\{b_i\}_{1\leq i\leq M+3}
	\end{array}\right)\\
	\notag&=
	\prod_{i=1}^M\frac{(qa_i/a_{M+2},qa_i/a_{M+3})_\infty}{(q^2a_ib_{M+3}/(a_{M+2}a_{M+3}))_\infty}\prod_{j=1}^{M+2}{(q^2b_jb_{M+3}/(a_{M+2}a_{M+3}))_\infty}\prod_{j=1}^{M+3}\frac{1}{(qb_j/a_{M+2},qb_j/a_{M+3})_\infty}\\
	\notag&\phantom{=}\times {(a_{M+1}/b_{M+3},a_{M+2}/a_{M+3},a_{M+3}/a_{M+2})_\infty}\frac{1}{a_{M+3}-a_{M+2}}\\
	&\phantom{=}\times W^{M,2}\left(\{a_i\}_{1\leq i\leq M};\frac{qb_{M+3}}{a_{M+2}a_{M+3}};\left\{\frac{1}{b_j}\right\}_{1\leq j\leq M+2};\frac{qb_{M+3}}{a_{M+2}},\frac{qb_{M+3}}{a_{M+3}};\frac{a_{M+1}}{b_{M+3}}\right),\\
	\notag&W^{M,2}\left(\{a_i\}_{1\leq i\leq M};\frac{qb_{M+3}}{a_{M+2}a_{M+3}};\left\{\frac{1}{b_j}\right\}_{1\leq j\leq M+2};\frac{qb_{M+3}}{a_{M+2}},\frac{qb_{M+3}}{a_{M+3}};\frac{a_{M+1}}{b_{M+3}}\right)\\
	\notag&=\sum_{l\in\mathbb{Z}_{\geq0}^M}\left(\frac{a_{M+1}}{b_{M+3}}\right)^{|l|}\frac{\Delta (\{a_iq^{l_i}\}_{1\leq i\leq M})}{\Delta(\{a_i\}_{1\leq i\leq M})}\frac{(qb_{M+3}/a_{M+2},qb_{M+3}/a_{M+3})_{|l|}}{\prod_{j=1}^{M+2}(q^2 b_jb_{M+3}/(a_{M+2}a_{M+3}))_{|l|}}\\
	&\phantom{=}\times\prod_{i=1}^M\left(\frac{1-(qb_{M+3}/(a_{M+2}a_{M+3}))a_iq^{|l|+l_i}}{1-(qb_{M+3}/(a_{M+2}a_{M+3}))a_i}\frac{(qa_ib_{M+3}/(a_{M+2}a_{M+3}))_{|l|}\prod_{j=1}^{M+2}(a_i/b_j)_{l_i}}{\prod_{j=1}^M(qa_i/a_j)_{l_i} \cdot(qa_i/a_{M+2},qa_i/a_{M+3})_{l_i}}\right).
\end{align}}
As discussed above, the system $q$-$RP^M$ is an extension of the $q$-Appell--Lauricella system.
The integral $\varphi_{i,j}$ \eqref{intmulti} is a $q$-analog of 
\begin{align}
	\int_{1/t_i}^{1/t_j} \prod_{k=1}^{M+3}(1-t_kt)^{\nu_i}dt\ \ (\nu_1+\cdots+\nu_{M+3}=-2).
\end{align}
{It is well-known in \cite{L} that the Appell--Lauricella hypergeometric series
\begin{align}
	&F_D\left(\begin{array}{c}
		\alpha;\{\beta_i\}_{1\leq i\leq M}\\
		\gamma
	\end{array};\{x_i\}_{1\leq i\leq M}\right)=\sum_{l\in(\mathbb{Z}_{\geq 0})^M}\frac{(\alpha;|l|)}{(\gamma;|l|)}\prod_{i=1}^M\left(\frac{(\beta_i;l_i)}{(1;l_i)}x_i^{l_i}\right)\quad (|x_i|<1),\\
	&(\alpha;k)=\frac{\Gamma(\alpha+k)}{\Gamma(\alpha)},
\end{align}
has the Euler type integral representation:
\begin{align}
	\label{difintFD}&F_D\left(\begin{array}{c}
		\alpha;\{\beta_i\}_{1\leq i\leq M}\\
		\gamma
	\end{array};\{x_i\}_{1\leq i\leq M}\right)=\frac{\Gamma(\gamma)}{\Gamma(\alpha)\Gamma(\gamma-\alpha)}\int_0^1 s^{\alpha-1}(1-s)^{\gamma-\alpha-1}\prod_{i=1}^M(1-x_is)^{-\beta_i}ds,
\end{align}
where $\mathrm{Re}\,(\alpha)>0$ and $\mathrm{Re}\,(\gamma-\alpha)>0$.
Changing the variables as
\begin{align}
	&x_i=\frac{t_i-t_{M+1}}{t_i-t_{M+3}}\frac{t_{M+2}-t_{M+3}}{t_{M+2}-t_{M+1}},\quad s=\frac{1-t_{M+3}t}{1-t_{M+1}t}\frac{t_{M+2}-t_{M+1}}{t_{M+2}-t_{M+3}},\\
	&\alpha=\nu_{M+3}+1,\quad \beta_i=-\nu_i,\quad \gamma=\nu_{M+2}+\nu_{M+3}+2,
\end{align}
we have
\begin{align}
	\notag&\int_{1/t_{M+3}}^{1/t_{M+2}}\prod_{k=1}^{M+3}(1-t_kt)^{\nu_k}dt\\
	\notag&=\frac{\Gamma(\nu_{M+2}+1)\Gamma(\nu_{M+3}+1)}{\Gamma(\nu_{M+2}+\nu_{M+3}+2)}\prod_{i=1}^M(t_i-t_{M+3})^{\nu_i}\\
	\notag&\phantom{=}\times(t_{M+1}-t_{M+3})^{\nu_{M+1}+\nu_{M+3}+2}(t-_{M+2}-t_{M+3})^{\nu_{M+2}+\nu_{M+3}+1}(t_{M+2}-t_{M+1})^{-\nu_{M+3}-1}\\
	&\phantom{=}\times F_D\left(\begin{array}{c}
		\nu_{M+3}+1;\{-\nu_i\}_{1\leq i\leq M}\\\nu_{M+2}+\nu_{M+3}+2
	\end{array};\left\{\frac{t_i-t_{M+1}}{t_i-t_{M+3}}\frac{t_{M+2}-t_{M+3}}{t_{M+2}-t_{M+1}}\right\}_{1\leq i\leq M}\right),
\end{align}
where $\nu_1+\nu_2+\cdots+\nu_{M+3}=-2$.}

{
Before considering degenerations of the series $W$, we show that the series $W^{M,2}$ is a $q$-analog of $\displaystyle F_D\left(\begin{array}{c}
	\nu_{M+3}+1;\{-\nu_i\}_{1\leq i\leq M}\\\nu_{M+2}+\nu_{M+3}+2
\end{array};\left\{\frac{t_i-t_{M+1}}{t_i-t_{M+3}}\frac{t_{M+2}-t_{M+3}}{t_{M+2}-t_{M+1}}\right\}_{1\leq i\leq M}\right)$.
We put $a_i=t_i$ and $b_i=q^{\nu_i}t_i$.
By using the formulas
\begin{align}
	\frac{(q^\alpha)_k}{(1-q)^k}\xrightarrow{q\to1}(\alpha;k),\ (q^\alpha X)_k\xrightarrow{q\to1}(1-X)^k,
\end{align}
we have
\begin{align}
	\notag&\left(\frac{a_{M+1}}{b_{M+3}}\right)^{|l|}\frac{\Delta (\{a_iq^{l_i}\}_{1\leq i\leq M})}{\Delta(\{a_i\}_{1\leq i\leq M})}\frac{(qb_{M+3}/a_{M+2},qb_{M+3}/a_{M+3})_{|l|}}{\prod_{j=1}^{M+2}(q^2 b_jb_{M+3}/(a_{M+2}a_{M+3}))_{|l|}}\\
	\notag&\times\prod_{i=1}^M\left(\frac{1-(qb_{M+3}/(a_{M+2}a_{M+3}))a_iq^{|l|+l_i}}{1-(qb_{M+3}/(a_{M+2}a_{M+3}))a_i}\frac{(qa_ib_{M+3}/(a_{M+2}a_{M+3}))_{|l|}\prod_{j=1}^{M+2}(a_i/b_j)_{l_i}}{\prod_{j=1}^M(qa_i/a_j)_{l_i} \cdot(qa_i/a_{M+2},qa_i/a_{M+3})_{l_i}}\right)\\
	&\xrightarrow{q\to1}\frac{(\nu_{M+3}+1;|l|)}{(\nu_{M+2}+\nu_{M+3}+2;|l|)}\prod_{i=1}^{M}\left(\frac{(-\nu_i;l_i)}{(1;l_i)}\left(\frac{t_i-t_{M+1}}{t_i-t_{M+3}}\frac{t_{M+2}-t_{M+3}}{t_{M+2}-t_{M+1}}\right)^{l_i}\right).
\end{align}
Therefore the series $\displaystyle W^{M,2}\left(\{a_i\}_{1\leq i\leq M};\frac{qb_{M+3}}{a_{M+2}a_{M+3}};\left\{\frac{1}{b_j}\right\}_{1\leq j\leq M+2};\frac{qb_{M+3}}{a_{M+2}},\frac{qb_{M+3}}{a_{M+3}};\frac{a_{M+1}}{b_{M+3}}\right)$ in \eqref{defW} can be regarded as a $q$-analog of $\displaystyle F_D\left(\begin{array}{c}
	\nu_{M+3}+1;\{-\nu_i\}_{1\leq i\leq M}\\\nu_{M+2}+\nu_{M+3}+2
\end{array};\left\{\frac{t_i-t_{M+1}}{t_i-t_{M+3}}\frac{t_{M+2}-t_{M+3}}{t_{M+2}-t_{M+1}}\right\}_{1\leq i\leq M}\right)$.}
%
%
The degeneration from $q$-$RP^M$ to the $q$-Appell--Lauricella system corresponds to the limit $\{t_{M+1},t_{M+2},t_{M+3}\}\to\{1,0,\infty\}$ roughly.
We will get series solutions, which are $q$-analogs of $F_D$ with linear fractions in the arguments, for the $q$-Appell--Lauricella system by taking a suitable limit of $W$.

Due to the integral representation of $W$ \eqref{inttoW}:
\begin{align}
	W\left(\begin{array}{c}
		\{a_i\}_{1\leq i\leq M+3}\\\{b_i\}_{1\leq i\leq M+3}
	\end{array}\right)=\frac{1}{q(1-q)(q)_\infty}\int_{q/a_{M+2}}^{q/a_{M+3}}\prod_{k=1}^{M+3}\frac{(a_kt)_\infty}{(b_kt)_\infty}d_qt,
\end{align}
and the equation \eqref{reductionintegral1}, we should consider the limit
\begin{align}
	\lim_{a_{M+3}\to\infty}\left(C_0\left(\frac{q}{a_{M+2}}\right)W\left(\begin{array}{c}
		\{a_{i}\}_{1\leq i\leq M+3}\\\{b_{i}\}_{1\leq i\leq M+3}
	\end{array}\right)\right)\bigg|_{b_{M+3}=q^\lambda a_{M+3}}.
\end{align}
By taking the termwise limit, we find
\begin{align}
	\notag&W^{M,2}\left(\{a_i\}_{1\leq i\leq M};\frac{qb_{M+3}}{a_{M+2}a_{M+3}};\left\{\frac{1}{b_j}\right\}_{1\leq j\leq M+2};\frac{qb_{M+3}}{a_{M+2}},\frac{qb_{M+3}}{a_{M+3}};\frac{a_{M+1}}{b_{M+3}}\right)\bigg|_{b_{M+3}=q^\lambda a_{M+3}}\\
	\notag&\xrightarrow{a_{M+3}\to\infty}\sum_{l\in\mathbb{Z}_{\geq0}^M}\left(-\frac{qa_{M+1}}{a_{M+2}}\right)^{|l|}q^{\binom{|l|}{2}}\frac{\Delta (\{a_iq^{l_i}\}_{1\leq i\leq M})}{\Delta(\{a_i\}_{1\leq i\leq M})}\frac{(q^{\lambda+1})_{|l|}}{\prod_{j=1}^{M+2}(q^{\lambda+2} b_j/a_{M+2})_{|l|}}\\
	\label{just6}&\phantom{\xrightarrow{q\to1}\sum_{l\in\mathbb{Z}_{\geq0}^M}}\times\prod_{i=1}^M\left(\frac{1-(q^{\lambda+1} /a_{M+2})a_iq^{|l|+l_i}}{1-(q^{\lambda+1} /a_{M+2})a_i}\frac{(q^{\lambda+1} a_i/a_{M+2})_{|l|}\prod_{j=1}^{M+2}(a_i/b_j)_{l_i}}{\prod_{j=1}^M(qa_i/a_j)_{l_i} \cdot(qa_i/a_{M+2})_{l_i}}\right),\\
	\notag&\left(s(b_{M+2},b_{M+3}).W^{M,2}\left(\{a_i\}_{1\leq i\leq M};\frac{qb_{M+3}}{a_{M+2}a_{M+3}};\left\{\frac{1}{b_j}\right\}_{1\leq j\leq M+2};\frac{qb_{M+3}}{a_{M+2}},\frac{qb_{M+3}}{a_{M+3}};\frac{a_{M+1}}{b_{M+3}}\right)\right)\bigg|_{b_{M+3}=q^\lambda a_{M+3}}\\
	\label{just7}&\xrightarrow{a_{M+3}\to\infty}\sum_{l\in\mathbb{Z}_{\geq0}^M}\left(\frac{a_{M+1}}{b_{M+2}}\right)^{|l|}\frac{\Delta (\{a_iq^{l_i}\}_{1\leq i\leq M})}{\Delta(\{a_i\}_{1\leq i\leq M})}\frac{(qb_{M+2}/a_{M+2})_{|l|}}{(q^{\lambda+2} b_{M+2}/a_{M+2})_{|l|}}\prod_{i=1}^M\left(\frac{\prod_{j=1}^{M+1}(a_i/b_j)_{l_i}}{\prod_{j=1}^M(qa_i/a_j)_{l_i} \cdot(qa_i/a_{M+2})_{l_i}}\right),
\end{align}
{where $s(x,y):x\leftrightarrow y$.
These termwise limits are justified by Tannery's theorem (see Appendix \ref{appA}).
Hence, we have
\begin{align}
	\notag&\lim_{a_{M+3}\to\infty}\left(C_0\left(\frac{q}{a_{M+2}}\right)W\left(\begin{array}{c}
		\{a_{i}\}_{1\leq i\leq M+3}\\\{b_{i}\}_{1\leq i\leq M+3}
	\end{array}\right)\right)\bigg|_{b_{M+3}=q^\lambda a_{M+3}}\\
	\notag&=\frac{-1}{a_{M+2}(q^{\lambda+1})_\infty}\left(\frac{q}{a_{M+2}}\right)^\lambda\prod_{i=1}^M\frac{(qa_i/a_{M+2})_\infty}{(q^{\lambda+2} a_i/a_{M+2})_\infty}\prod_{j=1}^{M+2}\frac{(q^{\lambda+2} b_j/a_{M+2})_\infty}{(qb_j/a_{M+2})_\infty}\\
	\notag&\phantom{=}\times\sum_{l\in\mathbb{Z}_{\geq0}^M}\left(-\frac{qa_{M+1}}{a_{M+2}}\right)^{|l|}q^{\binom{|l|}{2}}\frac{\Delta (\{a_iq^{l_i}\}_{1\leq i\leq M})}{\Delta(\{a_i\}_{1\leq i\leq M})}\frac{(qq^\lambda)_{|l|}}{\prod_{j=1}^{M+2}(q^{\lambda+2} b_j/a_{M+2})_{|l|}}\\
	&\phantom{\xrightarrow{q\to1}\sum_{l\in\mathbb{Z}_{\geq0}^M}}\times\prod_{i=1}^M\left(\frac{1-(q^{\lambda+1} /a_{M+2})a_iq^{|l|+l_i}}{1-(q^{\lambda+1} /a_{M+2})a_i}\frac{(q^{\lambda+1} a_i/a_{M+2})_{|l|}\prod_{j=1}^{M+2}(a_i/b_j)_{l_i}}{\prod_{j=1}^M(qa_i/a_j)_{l_i} \cdot(qa_i/a_{M+2})_{l_i}}\right),\label{newdegene1}
\end{align}
and if $|a_{M+1}/b_{M+2}|<1$, we have
\begin{align}
	\notag&\lim_{a_{M+3}\to\infty}\left(C_0\left(\frac{q}{a_{M+2}}\right)s(b_{M+2},b_{M+3}).W\left(\begin{array}{c}
		\{a_{i}\}_{1\leq i\leq M+3}\\\{b_{i}\}_{1\leq i\leq M+3}
	\end{array}\right)\right)\bigg|_{b_{M+3}=q^\lambda a_{M+3}}\\
	\notag&=\frac{-(q^{\lambda+2}b_{M+2}/a_{M+2},a_{M+1}/b_{M+2})_\infty}{a_{M+2}(q^{\lambda+1})_\infty}\left(\frac{q}{a_{M+2}}\right)^\lambda\prod_{i=1}^M{(qa_i/a_{M+2})_\infty}\prod_{j=1}^{M+2}\frac{1}{(qb_j/a_{M+2})_\infty}\\
	&\phantom{=}\times\sum_{l\in\mathbb{Z}_{\geq0}^M}\left(\frac{a_{M+1}}{b_{M+2}}\right)^{|l|}\frac{\Delta (\{a_iq^{l_i}\}_{1\leq i\leq M})}{\Delta(\{a_i\}_{1\leq i\leq M})}\frac{(qb_{M+2}/a_{M+2})_{|l|}}{(q^{\lambda+2} b_{M+2}/a_{M+2})_{|l|}}\prod_{i=1}^M\left(\frac{\prod_{j=1}^{M+1}(a_i/b_j)_{l_i}}{\prod_{j=1}^M(qa_i/a_j)_{l_i} \cdot(qa_i/a_{M+2})_{l_i}}\right).\label{newdegene2}
\end{align}
}
In the above calculations, we use $(1/x)_k\xrightarrow{x\to\infty}1$ $(k\in\mathbb{Z}_{\geq0}\cup\{\infty\})$, $(x)_k x^{-k}\xrightarrow{x\to\infty}(-1)^kq^{\binom{k}{2}}$ $(k\in\mathbb{Z}_{\geq0})$.
\begin{rem}
	The series \eqref{newdegene1} converges for any $a_i$, $b_i$ and $\lambda$.
	The key factor is $q^{\binom{l}{2}}$.
	See Appendix \ref{appA} for more details.
\end{rem}
Next we consider the limit $a_{M+2}\to0$ with $b_{M+2}=q^\beta a_{M+2}$.
{We calculate three limits,
\begin{align}
	&\lim_{a_{M+2}\to0} (s(a_{M+1},a_{M+2}).\mbox{right-hand side of \eqref{newdegene1}})|_{b_{M+2}=q^\beta a_{M+2}},\\
	&\lim_{a_{M+2}\to0} (s(a_{M+1},a_{M+2}).\mbox{right-hand side of \eqref{newdegene2}})|_{b_{M+2}=q^\beta a_{M+2}},\\
	&\lim_{a_{M+2}\to0} (s(a_{M+1},a_{M+2}).s(b_{M+1},b_{M+2}).\mbox{right-hand side of \eqref{newdegene2}})|_{b_{M+2}=q^\beta a_{M+2}}.
\end{align}
Note that $s(a_{M+1},a_{M+2}).y$, $s(a_{M+1},a_{M+2}).s(b_{M+1},b_{M+2}).y$ are solutions of $q$-$RP^M$ if $y$ is a solution, because the system $q$-$RP^M$ is symmetric for $\{a_i\}$, and for $\{b_i\}$.
Using the formulas $(x)_k\xrightarrow{x\to0}1$ $(k\in\mathbb{Z}_{\geq0}\cup\{\infty\})$, $(1/x)_k x^k\xrightarrow{x\to0}(-1)^k q^{\binom{k}{2}}$ $(k\in\mathbb{Z}_{\geq0})$, we have
\begin{align}
	\notag&\lim_{a_{M+2}\to0} (s(a_{M+1},a_{M+2}).\mbox{right-hand side of \eqref{newdegene1}})|_{b_{M+2}=q^\beta a_{M+2}}\\
	\notag&=\frac{-1}{a_{M+1}(q^{\lambda+1})_\infty}\left(\frac{q}{a_{M+1}}\right)^\lambda\prod_{i=1}^M\frac{(qa_i/a_{M+1})_\infty}{(q^{\lambda+2} a_i/a_{M+1})_\infty}\prod_{j=1}^{M+1}\frac{(q^{\lambda+2} b_j/a_{M+1})_\infty}{(qb_j/a_{M+1})_\infty}\\
	\notag&\phantom{=}\times\sum_{l\in\mathbb{Z}_{\geq0}^M}\left(-\frac{q}{a_{M+1}}\right)^{|l|}q^{\binom{|l|}{2}}\frac{\Delta (\{a_iq^{l_i}\}_{1\leq i\leq M})}{\Delta(\{a_i\}_{1\leq i\leq M})}\frac{(q^{\lambda+1})_{|l|}}{\prod_{j=1}^{M+1}(q^{\lambda+2} b_j/a_{M+1})_{|l|}}\\
	\label{just8}&\phantom{\xrightarrow{q\to1}\sum_{l\in\mathbb{Z}_{\geq0}^M}}\times\prod_{i=1}^M\left(\frac{1-(q^{\lambda+1} /a_{M+1})a_iq^{|l|+l_i}}{1-(q^{\lambda+1} /a_{M+1})a_i}\frac{(q^{\lambda+1} a_i/a_{M+1})_{|l|}\prod_{j=1}^{M+1}(a_i/b_j)_{l_i}\cdot (-a_i/q^\beta)^{l_i}q^{\binom{l_i}{2}}}{\prod_{j=1}^{M+1}(qa_i/a_j)_{l_i} }\right),\\
	\notag&\lim_{a_{M+2}\to0} (s(a_{M+1},a_{M+2}).\mbox{right-hand side of \eqref{newdegene2}})|_{b_{M+2}=q^\beta a_{M+2}}\\
	\notag&=\frac{-(q^{-\beta})_\infty}{a_{M+1}(q^{\lambda+1})_\infty}\left(\frac{q}{a_{M+1}}\right)^\lambda\prod_{i=1}^M{(qa_i/a_{M+1})_\infty}\prod_{j=1}^{M+1}\frac{1}{(qb_j/a_{M+1})_\infty}\\
	\label{just9}&\phantom{=}\times\sum_{l\in\mathbb{Z}_{\geq0}^M}\left(q^{-\beta}\right)^{|l|}\frac{\Delta (\{a_iq^{l_i}\}_{1\leq i\leq M})}{\Delta(\{a_i\}_{1\leq i\leq M})}\prod_{i=1}^M\left(\frac{\prod_{j=1}^{M+1}(a_i/b_j)_{l_i}}{\prod_{j=1}^{M+1}(qa_i/a_j)_{l_i} }\right)\qquad(|q^{-\beta}|<1),\\
	\notag&\lim_{a_{M+2}\to0} (s(a_{M+1},a_{M+2}).s(b_{M+1},b_{M+2}).\mbox{right-hand side of \eqref{newdegene2}})|_{b_{M+2}=q^\beta a_{M+2}}\\
	\notag&=\frac{-(q^{\lambda+2}b_{M+1}/a_{M+1})_\infty}{a_{M+1}(q^{\lambda+1})_\infty}\left(\frac{q}{a_{M+1}}\right)^\lambda\prod_{i=1}^M{(qa_i/a_{M+1})_\infty}\prod_{j=1}^{M+1}\frac{1}{(qb_j/a_{M+1})_\infty}\\
	\label{just10}&\phantom{=}\times\sum_{l\in\mathbb{Z}_{\geq0}^M}\left(\frac{1}{b_{M+1}}\right)^{|l|}\frac{\Delta (\{a_iq^{l_i}\}_{1\leq i\leq M})}{\Delta(\{a_i\}_{1\leq i\leq M})}\frac{(qb_{M+1}/a_{M+1})_{|l|}}{(q^{\lambda+2} b_{M+1}/a_{M+1})_{|l|}}\prod_{i=1}^M\left(\frac{\prod_{j=1}^{M}(a_i/b_j)_{l_i}\cdot (-a_i/q^\beta)^{l_i}q^{\binom{l_i}{2}}}{\prod_{j=1}^{M+1}(qa_i/a_j)_{l_i}}\right).
\end{align}
These limits can be calculated by Tannery's theorem (see Appendix \ref{appA}).}
In conclusion, we get three solutions for the system $q$-$RP^M_{\mathrm{degene}}$.
\begin{prop}
	The following functions satisfy the system $q$-$RP^M_{\mathrm{degene}}$:
	\begin{align}
		\notag&\left(\frac{1}{a_{M+1}}\right)^{\lambda+1}\prod_{i=1}^M\frac{(qa_i/a_{M+1})_\infty}{(q^{\lambda+2} a_i/a_{M+1})_\infty}\prod_{j=1}^{M+1}\frac{(q^{\lambda+2} b_j/a_{M+1})_\infty}{(qb_j/a_{M+1})_\infty}\\
		\notag&\phantom{=}\times\sum_{l\in\mathbb{Z}_{\geq0}^M}\left(\frac{q}{a_{M+1}}\right)^{|l|}q^{\binom{|l|}{2}}\frac{\Delta (\{a_iq^{l_i}\}_{1\leq i\leq M})}{\Delta(\{a_i\}_{1\leq i\leq M})}\frac{(q^{\lambda+1})_{|l|}}{\prod_{j=1}^{M+1}(q^{\lambda+2} b_j/a_{M+1})_{|l|}}\\
		\label{newdegene11}&\phantom{\xrightarrow{q\to1}\sum_{l\in\mathbb{Z}_{\geq0}^M}}\times\prod_{i=1}^M\left(\frac{1-(q^{\lambda+1} /a_{M+1})a_iq^{|l|+l_i}}{1-(q^{\lambda+1} /a_{M+1})a_i}\frac{(q^{\lambda+1} a_i/a_{M+1})_{|l|}\prod_{j=1}^{M+1}(a_i/b_j)_{l_i}\cdot (a_i/q^\beta)^{l_i}q^{\binom{l_i}{2}}}{\prod_{j=1}^{M+1}(qa_i/a_j)_{l_i} }\right),\\
		\notag&\left(\frac{1}{a_{M+1}}\right)^{\lambda+1}\prod_{i=1}^M{(qa_i/a_{M+1})_\infty}\prod_{j=1}^{M+1}\frac{1}{(qb_j/a_{M+1})_\infty}\\
		\label{newdegene21}&\phantom{=}\times\sum_{l\in\mathbb{Z}_{\geq0}^M}\left(q^{-\beta}\right)^{|l|}\frac{\Delta (\{a_iq^{l_i}\}_{1\leq i\leq M})}{\Delta(\{a_i\}_{1\leq i\leq M})}\prod_{i=1}^M\left(\frac{\prod_{j=1}^{M+1}(a_i/b_j)_{l_i}}{\prod_{j=1}^{M+1}(qa_i/a_j)_{l_i} }\right)\qquad(|q^{-\beta}|<1),\\
		\notag&\frac{(q^{\lambda+2}b_{M+1}/a_{M+1})_\infty}{(q^{\lambda+1})_\infty}\left(\frac{1}{a_{M+1}}\right)^{\lambda+1}\prod_{i=1}^M{(qa_i/a_{M+1})_\infty}\prod_{j=1}^{M+1}\frac{1}{(qb_j/a_{M+1})_\infty}\\
		\label{newdegene22}&\phantom{=}\times\sum_{l\in\mathbb{Z}_{\geq0}^M}\left(\frac{1}{b_{M+1}}\right)^{|l|}\frac{\Delta (\{a_iq^{l_i}\}_{1\leq i\leq M})}{\Delta(\{a_i\}_{1\leq i\leq M})}\frac{(qb_{M+1}/a_{M+1})_{|l|}}{(q^{\lambda+2} b_{M+1}/a_{M+1})_{|l|}}\prod_{i=1}^M\left(\frac{\prod_{j=1}^{M}(a_i/b_j)_{l_i}\cdot (-a_i/q^\beta)^{l_i}q^{\binom{l_i}{2}}}{\prod_{j=1}^{M+1}(qa_i/a_j)_{l_i}}\right).
	\end{align}
	Here $q^\beta=a_1\cdots a_{M+1}/(q^{\lambda+2}b_1\cdots b_{M+1})$.
\end{prop}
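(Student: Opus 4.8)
The plan is to realize each of the three functions as a limit of a known solution of $q$-$RP^M$ and to feed in the operator convergence already recorded in this section. By Corollary \ref{corKajiharaeq} the function $W$ solves $q$-$RP^M$, and since that system is symmetric in $\{a_i\}$ and in $\{b_i\}$, each of $W$, $s(b_{M+2},b_{M+3}).W$ and $s(b_{M+1},b_{M+2}).W$ is again a solution, and the further symmetrization $s(a_{M+1},a_{M+2})$ may be applied once the relevant system is symmetric in the swapped pair. First I would multiply such a solution $y$ by the pseudo-constant $C_0(q/a_{M+2})$ of \eqref{defC0}: because $C_0$ is a pseudo-constant for $T_{a_i},T_{b_i}$ with $1\le i\le M+2$ and only contributes the factor $q^\lambda$ to $T_{a_{M+3}}T_{b_{M+3}}$, the gauged function $z=C_0(q/a_{M+2})\,y$ satisfies the system carrying the eigenvalue $q^{\lambda+1}$, namely the intermediate system written just before the degeneration.

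Next I would set $b_{M+3}=q^\lambda a_{M+3}$ and let $a_{M+3}\to\infty$, then set $b_{M+2}=q^\beta a_{M+2}$ and let $a_{M+2}\to0$, exactly the limits producing \eqref{newdegene1}, \eqref{newdegene2} and then \eqref{just8}, \eqref{just9}, \eqref{just10}. The two inputs, both established above, are the coefficient convergence $\tfrac1{a_{M+3}}\hat E_M\to\hat E_M'$ and $\lim_{a_{M+2}\to0}\hat E_M'=b_1\hat E_M''$ together with the facts that the three-term relations for $2\le k\ne l\le M+1$ are unchanged (indeed \eqref{multidegeneeq1} is just $T_{a_k}^{-1}$ applied to \eqref{sys3term1}, and likewise for the others) and that $T_{a_{M+2}}^{\pm1},T_{b_{M+2}}^{\pm1},T_{a_{M+3}}^{\pm1},T_{b_{M+3}}^{\pm1}$ tend to the identity. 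Since every shift surviving in $q$-$RP^M_{\mathrm{degene}}$ acts only on $a_1,\dots,a_{M+1},b_1,\dots,b_{M+1}$, which are disjoint from the limiting parameters $a_{M+2},a_{M+3}$, each such shift commutes with the limits. Thus for an operator $P$ of $q$-$RP^M_{\mathrm{degene}}$ and its counterpart $P'$ in the gauged system one has $P(\lim z)=\lim (P z)=\lim (P-P')z$, and the coefficient convergence $P'\to P$ forces the right-hand side to vanish; only $\hat E_M$ and the eigenvalue relation genuinely require this step, the three-term relations being preserved identically. Hence \eqref{just8}, \eqref{just9}, \eqref{just10} all solve $q$-$RP^M_{\mathrm{degene}}$.

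Finally I would match these limits with the stated functions. Comparing the prefactors together with the sign factors $(-q/a_{M+1})^{|l|}$ and $(-a_i/q^\beta)^{l_i}$ inside the sums, one finds \eqref{just8}$=\tfrac{-q^\lambda}{(q^{\lambda+1})_\infty}$ times \eqref{newdegene11} and \eqref{just10}$=-q^\lambda$ times \eqref{newdegene22}; these multipliers are genuine constants, invariant under every shift, so \eqref{newdegene11} and \eqref{newdegene22} are solutions as well. The remaining relation is \eqref{just9}$=\tfrac{-q^\lambda(q^{-\beta})_\infty}{(q^{\lambda+1})_\infty}$ times \eqref{newdegene21}, whose factor $(q^{-\beta})_\infty$ is not a pseudo-constant; here I would observe that it nonetheless maps solutions to solutions of $q$-$RP^M_{\mathrm{degene}}$, because in each defining relation \eqref{hatem2dash}--\eqref{multidegeneeq7} all shifts change $\beta=\log_q\big(a_1\cdots a_{M+1}/(q^{\lambda+2}b_1\cdots b_{M+1})\big)$ by the same amount (by $-1$ in the three-term relations and by $0$ in $\hat E_M''z=0$ and \eqref{multidegeneeq7}), so $(q^{-\beta})_\infty$ is carried to one and the same multiple of itself by every shift of a given equation and the equation is preserved. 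This shows \eqref{newdegene21} is a solution too.

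The hard part will be the interchange of the limits with the $q$-shift operators appearing in $\hat E_M$ and in the eigenvalue relation: one must control the finitely many shifted $q$-hypergeometric-type sums uniformly as $a_{M+3}\to\infty$ and $a_{M+2}\to0$, so that termwise convergence of each series upgrades to convergence of the full expression together with all its shifts. This is precisely the role of Tannery's theorem (Appendix \ref{appA}), and extra care is needed because after degeneration the summands acquire the Gaussian factors $q^{\binom{|l|}{2}}$ and $q^{\binom{l_i}{2}}$; the series \eqref{newdegene11} then converges for all values of the parameters, and the dominating bounds used in Tannery's theorem must be chosen to accommodate these factors.
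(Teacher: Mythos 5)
Your proposal follows essentially the same route as the paper: the proposition is obtained there as the direct conclusion of the preceding degeneration computations, namely gauging a symmetrized solution $W$ of $q$-$RP^M$ by $C_0$, taking the limits $a_{M+3}\to\infty$ and $a_{M+2}\to 0$ with the operator convergences $\hat{E}_M\to\hat{E}_M'\to\hat{E}_M''$ and Tannery's theorem from the Appendix, and then stripping off overall prefactors. Your extra observation that the non-constant factor $(q^{-\beta})_\infty$ relating \eqref{just9} to \eqref{newdegene21} still maps solutions to solutions (because every shift occurring in a given relation of $q$-$RP^M_{\mathrm{degene}}$ changes $\beta$ by the same amount) is correct and supplies a detail the paper leaves implicit.
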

\begin{rem}
	The series \eqref{newdegene11} and \eqref{newdegene22} converge for arbitrary values thanks to the factor $q^{\binom{l}{2}}$ and $q^{\binom{l_i}{2}}$.
	See Appendix \ref{appA} for more details.
\end{rem}
\begin{cor}\label{newcorALsol}
	The functions
	\begin{align}
		\notag&\prod_{i=1}^M \frac{(Ax_i,B_ix_i)_\infty}{(AB_ix_i,x_i)_\infty}
		\\
		\notag&\times \sum_{l\in\mathbb{Z}_{\geq0}^M}q^{\binom{|l|}{2}}\frac{\Delta(\{B_ix_iq^{l_i}\}_{1\leq i\leq M})}{\Delta(\{B_ix_i\}_{1\leq i\leq M})}\frac{(A)_{|l|}}{(C)_{|l|}}\\
		&\phantom{\xrightarrow{q\to1}\sum_{l\in\mathbb{Z}_{\geq0}^M}}\times\prod_{i=1}^M \left(\frac{1-A B_ix_iq^{|l|+l_i}/q}{1-A B_ix_i/q}\frac{(AB_ix_i/q)_{|l|}\prod_{j=1}^M (B_ix_i/x_j)_{l_i}\cdot (AB_ix_i/C)_{l_i}(B_iCx_i/B)^{l_i}q^{\binom{l_i}{2}}}{(Ax_i)_{|l|}\prod_{j=1}^M(qB_ix_i/(B_jx_j))_{l_i}\cdot (B_ix_i)_{l_i}}\right),\label{newqALsol1}\\
		&\prod_{i=1}^M\frac{(B_ix_i)_\infty}{(x_i)_\infty}\sum_{l\in\mathbb{Z}_{\geq0}^M}\left(\frac{C}{B}\right)^{|l|}\frac{\Delta(\{B_ix_iq^{l_i}\}_{1\leq i\leq M})}{\Delta(\{B_ix_i\}_{1\leq i\leq M})}\prod_{i=1}^M \frac{\prod_{j=1}^M(B_ix_i/x_j)_{l_i}\cdot (AB_ix_i/C)_{l_i}}{\prod_{j=1}^M(qB_ix_i/(B_jx_j))_{l_i}\cdot (B_ix_i)_{l_i}}\quad \left(\left|\frac{C}{B}\right|<1\right),\label{newqALsol2}\\
		&\prod_{i=1}^M\frac{(B_ix_i)_\infty}{(x_i)_\infty}\sum_{l\in\mathbb{Z}_{\geq0}^M}\left(-\frac{A}{B}\right)^{|l|}\frac{\Delta(\{B_ix_iq^{l_i}\}_{1\leq i\leq M})}{\Delta(\{B_ix_i\}_{1\leq i\leq M})}\frac{(C/A)_{|l|}}{(C)_{|l|}}\prod_{i=1}^M\frac{\prod_{j=1}^M(B_ix_i/x_j)_{l_i}\cdot (B_ix_i)^{l_i}q^{\binom{l_i}{2}}}{\prod_{j=1}^M (qB_ix_i/(B_jx_j))_{l_i}\cdot (B_ix_i)_{l_i}},\label{newqALsol3}
	\end{align}
	where $B=B_1B_2\cdots B_M$,
	are solutions of the $q$-Appell--Lauricella system \eqref{qALsyscap1}, \eqref{qALsyscap2}.
\end{cor}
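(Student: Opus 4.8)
The plan is to combine two facts already established in this section. By the Proposition immediately preceding this Corollary, the three functions \eqref{newdegene11}, \eqref{newdegene21}, \eqref{newdegene22} are solutions of $q$-$RP^M_{\mathrm{degene}}$; and by the Proposition before that, under the substitution $a_{M+1}=q$, $b_{M+1}=C/A$, $q^\lambda=A/q$, $a_i=B_ix_i$, $b_i=x_i$ ($1\le i\le M$), every solution of $q$-$RP^M_{\mathrm{degene}}$ is a solution of the $q$-Appell--Lauricella system \eqref{qALsyscap1}, \eqref{qALsyscap2}. Hence it suffices to carry out this substitution on each of the three functions and identify the outcome, up to an $\{x_i\}$-independent constant, with \eqref{newqALsol1}, \eqref{newqALsol2}, \eqref{newqALsol3}. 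Since the system \eqref{qALsyscap1}, \eqref{qALsyscap2} is homogeneous and linear, any such constant gauge factor may be discarded without affecting the solution property.

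First I would record the evaluations forced by the substitution. From $q^\beta=a_1\cdots a_{M+1}/(q^{\lambda+2}b_1\cdots b_{M+1})$ together with $q^{\lambda+2}=Aq$ one computes $q^\beta=B/C$, where $B=B_1\cdots B_M$; moreover $q^{\lambda+1}=A$, and the basic ratios specialize to $a_i/a_{M+1}=B_ix_i/q$, $q^{\lambda+2}b_j/a_{M+1}=Ax_j$ and $qb_j/a_{M+1}=x_j$ for $1\le j\le M$, while the index $j=M+1$ gives $q^{\lambda+2}b_{M+1}/a_{M+1}=C$ and $qb_{M+1}/a_{M+1}=C/A$. I would then simplify the prefactors: substituting into the infinite products, every factor built from $b_{M+1}=C/A$, from $q^{\lambda+1}$, or from the scalar $(1/a_{M+1})^{\lambda+1}=1/A$ becomes a constant independent of $\{x_i\}$; for instance the $j=M+1$ term of $\prod_{j}(q^{\lambda+2}b_j/a_{M+1})_\infty/(qb_j/a_{M+1})_\infty$ in \eqref{newdegene11} evaluates to $(C)_\infty/(C/A)_\infty$. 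Discarding these constants leaves exactly $\prod_{i=1}^M (Ax_i,B_ix_i)_\infty/(AB_ix_i,x_i)_\infty$ for \eqref{newdegene11} and $\prod_{i=1}^M (B_ix_i)_\infty/(x_i)_\infty$ for both \eqref{newdegene21} and \eqref{newdegene22}, matching the prefactors of \eqref{newqALsol1}, \eqref{newqALsol2}, \eqref{newqALsol3}.

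Finally I would check the summands factor by factor. Each $q$-shifted factorial transforms as claimed, with the $j=M+1$ contributions supplying $(A)_{|l|}$, $(C)_{|l|}$, $(B_ix_i)_{l_i}$ and $(AB_ix_i/C)_{l_i}$ in the appropriate slots, while the very-well-poised factor becomes $(1-AB_ix_iq^{|l|+l_i}/q)/(1-AB_ix_i/q)$ and the Vandermonde ratio becomes $\Delta(\{B_ix_iq^{l_i}\})/\Delta(\{B_ix_i\})$. The one point that requires genuine care is the collection of the $z$-type power factors in \eqref{newdegene22}: the prefactor $(1/b_{M+1})^{|l|}=(A/C)^{|l|}$ combines with $\prod_i(-a_i/q^\beta)^{l_i}=(-C/B)^{|l|}\prod_i(B_ix_i)^{l_i}$ to yield the factor $(-A/B)^{|l|}\prod_i(B_ix_i)^{l_i}$ appearing in \eqref{newqALsol3}; the analogous but simpler bookkeeping for \eqref{newdegene11} and \eqref{newdegene21} produces the stated $q^{\binom{|l|}{2}}$-weighted sum with base $q/a_{M+1}=1$ (so the overall $z^{|l|}$ drops out) and the $(C/B)^{|l|}$-weighted sum, respectively. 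I expect this tracking of the power factors to be the only place a slip is likely, but it is entirely routine, and completing it identifies the three substituted functions with \eqref{newqALsol1}, \eqref{newqALsol2}, \eqref{newqALsol3}, which are therefore solutions of the $q$-Appell--Lauricella system.
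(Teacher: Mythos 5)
Your proposal is correct and follows exactly the paper's own route: the paper's proof is precisely the substitution $a_i=B_ix_i$, $b_i=x_i$, $a_{M+1}=q$, $b_{M+1}=C/A$, $q^\lambda=A/q$ (whence $q^\beta=B/C$) applied to the three solutions of $q$-$RP^M_{\mathrm{degene}}$, combined with the proposition that this system implies the $q$-Appell--Lauricella system. Your additional bookkeeping of the power factors and the remark that constant gauge factors may be discarded only make explicit what the paper leaves implicit.
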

\begin{proof}
	We get desired results by putting $a_i=B_i x_i$, $b_i=x_i$ ($1\leq i\leq M$), $a_{M+1}=q$, $b_{M+1}=C/A$, $q^\lambda=A/q$.
	Note that $q^\beta=q^{-\lambda-2} a_1\cdots a_{M+1}/(b_1\cdots b_{M+1})=B_1\cdots B_M/C$
\end{proof}
In \cite{N}, a characterization of solutions for the $q$-Appell--Lauricella system was obtained.
\begin{prop}[\cite{N}]
	{Suppose that} a solution $y$ for the $q$-Appell--Lauricella system \eqref{qALsyscap1}, \eqref{qALsyscap2} has the following asymptotic behavior near $\{0\ll|x_1|\ll\cdots \ll|x_M|\ll1\}$:
	\begin{align}
		y=1+O(|x_1/x_2|,|x_2/x_3|,\ldots,|x_{M-1}/x_M|,|x_M|).
	\end{align}
	Then we have 
	\begin{align}
		y=\varphi_D\left(\begin{array}{c}
			A;B_1,\ldots,B_M\\
			C
		\end{array};x_1,\ldots,x_M\right).
	\end{align}
\end{prop}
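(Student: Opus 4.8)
The statement is a uniqueness (characterization) result rather than an existence one: since $\varphi_D$ already satisfies the $q$-Appell--Lauricella system \eqref{qALsyscap1}, \eqref{qALsyscap2} and manifestly has the displayed corner behaviour (its constant term is $1$, and in the regime $0\ll|x_1|\ll\cdots\ll|x_M|\ll1$ every nonconstant monomial $\prod_i x_i^{l_i}$ is dominated by one of $|x_1/x_2|,\ldots,|x_{M-1}/x_M|,|x_M|$), a solution with the stated normalization surely exists. The plan is therefore to set $w=y-\varphi_D$, which is again a solution by linearity, and to prove that a solution obeying $w=O(|x_1/x_2|,\ldots,|x_{M-1}/x_M|,|x_M|)$ at the corner must vanish identically.

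First I would convert the system into recurrences for the coefficients of a formal series $\sum_{l\in\mathbb{Z}_{\geq0}^M}c_l\prod_i x_i^{l_i}$. Acting with \eqref{qALsyscap2} and matching the coefficient of $\prod_i x_i^{m_i}$ gives, for each $i$,
\begin{align}
(1-q^{m_i})(1-Cq^{|m|-1})\,c_m=(1-B_iq^{m_i-1})(1-Aq^{|m|-1})\,c_{m-e_i},
\end{align}
where $e_i$ is the $i$-th unit vector. For generic $A,C$ and any $m\neq0$ one may choose $i$ with $m_i\geq1$, so the left factor $(1-q^{m_i})$ is nonzero and $c_m$ is determined by $c_{m-e_i}$; equation \eqref{qALsyscap1} is precisely the integrability condition guaranteeing that the resulting $c_m$ is independent of the chosen descent direction. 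Hence, by induction on $|m|$, every solution holomorphic at the origin is fixed by its constant term $c_0$, and the choice $c_0=1$ reproduces exactly the coefficients of $\varphi_D$. Thus, once $w$ is known to be an honest convergent power series with vanishing constant term, the recurrence forces all of its coefficients to vanish, so $w\equiv0$.

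The hard part is to justify that the corner hypothesis really places $w$ in this holomorphic class and excludes the remaining members of a solution basis. The system has rank $M+1$ (as for $q$-$RP^M_{\mathrm{degene}}$), so by Lemma \ref{lemmarank} its solutions span an $(M+1)$-dimensional space over the field of pseudo-constants $K$; besides the holomorphic $\varphi_D$, a local basis adapted to the corner carries nontrivial prefactors (powers $x_i^{\rho}$ and theta-quotients arising from the indicial/characteristic data of the operators, exactly as in the $M=1$ Heine case recorded in Remark after \eqref{hatem2dash}). I would show that each such companion solution has leading corner behaviour that cannot be absorbed into $1+O(|x_1/x_2|,\ldots,|x_M|)$—it either diverges, tends to a value different from $1$, or is a genuine pseudo-constant multiple of a fractional power—so the normalization annihilates all their coefficients and forces the $\varphi_D$-coefficient to be $1$. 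The nested hierarchy $|x_1|\ll\cdots\ll|x_M|$ in the hypothesis is tailored to an induction on $M$: at the outermost layer $|x_M|\ll1$ one freezes $x_M$ and reduces to the $(M-1)$-variable characterization, the base case $M=1$ being the classical statement for Heine's $q$-hypergeometric equation. Carrying out this local-exponent and connection analysis at the corner, while tracking pseudo-constants rather than ordinary constants, is the main obstacle; the algebraic recurrence step above is routine by comparison.
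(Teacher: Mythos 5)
First, a point of orientation: the paper does not prove this proposition at all --- it is imported verbatim from \cite{N} (stated with the citation and immediately used), so there is no in-paper argument to compare yours against. The general framework the author points to for such characterizations is Aomoto's theory of holonomic $q$-difference systems \cite{Ao1,Ao2}, where local solutions are pinned down by asymptotics in a prescribed polysector; your proposal is in that spirit. Judged on its own, your algebraic half is sound: the coefficient recurrence
$(1-q^{m_i})(1-Cq^{|m|-1})c_m=(1-B_iq^{m_i-1})(1-Aq^{|m|-1})c_{m-e_i}$
does follow from \eqref{qALsyscap2}, the descent directions are compatible, and it correctly shows that a solution that is \emph{already known} to be a convergent power series with constant coefficients and constant term $1$ must equal $\varphi_D$.

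The genuine gap is that this is not what the hypothesis gives you. The assumption is only an asymptotic estimate $y=1+O(|x_1/x_2|,\ldots,|x_M|)$ in the polysector $\{0\ll|x_1|\ll\cdots\ll|x_M|\ll1\}$, and the solution space of the system is a rank-$(M+1)$ module over the field $K$ of pseudo-constants, which over $\mathbb{C}$ is infinite-dimensional: coefficients may be theta-quotients or other $q$-periodic functions whose behavior in the sector is not controlled by a naive exponent count, and the ``companion'' solutions carry fractional powers $x_i^{\rho}$ and theta prefactors whose products with pseudo-constants can oscillate rather than simply diverge or converge. Ruling all of these out --- i.e.\ showing the normalization annihilates every non-$\varphi_D$ component and forces the $\varphi_D$-coefficient (a priori only a pseudo-constant) to be the constant $1$ --- is precisely the content of the proposition, and you explicitly defer it (``the main obstacle''). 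The induction on $M$ by freezing $x_M$ is a reasonable outline, but without carrying out the local-exponent and pseudo-constant analysis at the corner (the base case $M=1$ already requires the full Heine-equation characterization, including why a pseudo-constant with limit $1$ along $q$-spirals must be identically $1$), the proof is incomplete. As written, the proposal establishes uniqueness within the wrong (too small) class of candidate solutions.
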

Using this characterization, we find relations between the $q$-Appell--Lauricella series $\varphi_D$ and the above solutions \eqref{newqALsol1}, \eqref{newqALsol2}, \eqref{newqALsol3}.
\begin{prop}\label{newpropALtrans}
	Suppose $|x_i|<1$ $(1\leq i\leq M)$.
	We have
	\begin{align}
		\notag&\prod_{i=1}^M \frac{(Ax_i,B_ix_i)_\infty}{(AB_ix_i,x_i)_\infty}
		\\
		\notag&\times \sum_{l\in\mathbb{Z}_{\geq0}^M}q^{\binom{|l|}{2}}\frac{\Delta(\{B_ix_iq^{l_i}\}_{1\leq i\leq M})}{\Delta(\{B_ix_i\}_{1\leq i\leq M})}\frac{(A)_{|l|}}{ (C)_{|l|}}\\
		\notag&\phantom{\xrightarrow{q\to1}\sum_{l\in\mathbb{Z}_{\geq0}^M}}\times\prod_{i=1}^M \left(\frac{1-A B_ix_iq^{|l|+l_i}/q}{1-A B_ix_i/q}\frac{(AB_ix_i/q)_{|l|}\prod_{j=1}^M (B_ix_i/x_j)_{l_i}\cdot (AB_ix_i/C)_{l_i}(B_iCx_i/B)^{l_i}q^{\binom{l_i}{2}}}{(Ax_i)_{|l|}\prod_{j=1}^M(qB_ix_i/(B_jx_j))_{l_i}\cdot (B_ix_i)_{l_i}}\right)\\
		\label{newtransAL1}&=\varphi_D\left(\begin{array}{c}
			A;\{B_i\}_{1\leq i\leq M}\\C
		\end{array};\{x_i\}\right),\\
		\notag&\frac{(C/B)_\infty}{(C)_\infty}\prod_{i=1}^M\frac{(B_ix_i)_\infty}{(x_i)_\infty}\sum_{l\in\mathbb{Z}_{\geq0}^M}\left(\frac{C}{B}\right)^{|l|}\frac{\Delta(\{B_ix_iq^{l_i}\}_{1\leq i\leq M})}{\Delta(\{B_ix_i\}_{1\leq i\leq M})}\prod_{i=1}^M \frac{\prod_{j=1}^M(B_ix_i/x_j)_{l_i}\cdot (AB_ix_i/C)_{l_i}}{\prod_{j=1}^M(qB_ix_i/(B_jx_j))_{l_i}\cdot (B_ix_i)_{l_i}}\\
		\label{newtransAL2}&=\varphi_D\left(\begin{array}{c}
			A;\{B_i\}_{1\leq i\leq M}\\C
		\end{array};\{x_i\}\right)\quad \left(\left|\frac{C}{B}\right|<1\right),\\
		\notag&\prod_{i=1}^M\frac{(B_ix_i)_\infty}{(x_i)_\infty}\sum_{l\in\mathbb{Z}_{\geq0}^M}\left(-\frac{A}{B}\right)^{|l|}\frac{\Delta(\{B_ix_iq^{l_i}\}_{1\leq i\leq M})}{\Delta(\{B_ix_i\}_{1\leq i\leq M})}\frac{(C/A)_{|l|}}{(C)_{|l|}}\prod_{i=1}^M\frac{\prod_{j=1}^M(B_ix_i/x_j)_{l_i}\cdot (B_ix_i)^{l_i}q^{\binom{l_i}{2}}}{\prod_{j=1}^M (qB_ix_i/(B_jx_j))_{l_i}\cdot (B_ix_i)_{l_i}}\\
		\label{newtransAL3}&=\varphi_D\left(\begin{array}{c}
			A;\{B_i\}_{1\leq i\leq M}\\C
		\end{array};\{x_i\}\right),
	\end{align}
	where $B=B_1B_2\cdots B_M$.
\end{prop}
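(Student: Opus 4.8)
The plan is to invoke the characterization of $\varphi_D$ recalled just above \cite{N}: a solution $y$ of the $q$-Appell--Lauricella system \eqref{qALsyscap1}, \eqref{qALsyscap2} must coincide with $\varphi_D$ as soon as it satisfies $y=1+O(|x_1/x_2|,\ldots,|x_{M-1}/x_M|,|x_M|)$ near the corner $\{0\ll|x_1|\ll\cdots\ll|x_M|\ll1\}$. By Corollary \ref{newcorALsol} each of the three left-hand sides of \eqref{newtransAL1}--\eqref{newtransAL3} is a solution of that system; the scalar prefactors, being independent of the operators $T_{x_i}$, do not affect this since the system is homogeneous. Hence it suffices to verify, for each of the three functions, that its leading term in the nested region equals $1$ and that the remainder is of the prescribed order.

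First I would analyze the iterated limit of the $M$-fold series. The key simplifications are: (i) for $i<j$ one has $|x_i/x_j|\to0$, so the Vandermonde ratio $\Delta(\{B_ix_iq^{l_i}\})/\Delta(\{B_ix_i\})=\prod_{i<j}\frac{B_ix_iq^{l_i}-B_jx_jq^{l_j}}{B_ix_i-B_jx_j}$ tends to $\prod_{i<j}q^{l_j}=q^{\sum_i(i-1)l_i}$; (ii) the off-diagonal factors $(B_ix_i/x_j)_{l_i}/(qB_ix_i/(B_jx_j))_{l_i}$ tend to $1$ for $i<j$ and to $(B_j/q)^{l_i}$ for $i>j$, while the diagonal $i=j$ factor is $(B_i)_{l_i}/(q)_{l_i}$; and (iii) every $q$-shifted factorial $(\,\cdot\,x_i)_{l_i}$ whose base vanishes as $x_i\to0$ tends to $1$. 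The interchange of the iterated limit with the summation is justified by Tannery's theorem exactly as in Appendix \ref{appA}.

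With these facts the three cases split. For \eqref{newtransAL1} and \eqref{newtransAL3} every term with $|l|\ge1$ carries an explicit positive power of some $x_i$ — namely $(B_iCx_i/B)^{l_i}$ in \eqref{newtransAL1} and $(B_ix_i)^{l_i}$ in \eqref{newtransAL3} — which the off-diagonal factors do not cancel, since those contribute only the finite constants $(B_j/q)^{l_i}$; hence all such terms vanish in the limit and only the $l=0$ term, equal to $1$, survives, while the infinite-product prefactor tends to $1$. For \eqref{newtransAL2} there is no such vanishing factor and the full series contributes: combining (i)--(iii), the powers $q^{\sum_i(i-1)l_i}$ from the Vandermonde factor cancel against the $q^{-(i-1)l_i}$ coming from the constants $(B_j/q)^{l_i}$, and the limiting series factorizes as $\prod_{i=1}^M\sum_{l_i\ge0}\frac{(B_i)_{l_i}}{(q)_{l_i}}(C/(B_i\cdots B_M))^{l_i}$. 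Summing each factor by the $q$-binomial theorem gives $\prod_{i=1}^M\frac{(C/(B_{i+1}\cdots B_M))_\infty}{(C/(B_i\cdots B_M))_\infty}$, which telescopes to $(C)_\infty/(C/B)_\infty$; this is precisely the reciprocal of the prefactor $(C/B)_\infty/(C)_\infty$ attached in \eqref{newtransAL2}, so the product is again $1$.

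It then remains to control the order of the correction. In each case the subleading contributions are positive powers of the individual $x_i$ together with powers of the ratios $x_i/x_j$ for $i<j$ (arising from the Vandermonde and off-diagonal corrections). Since in the region $|x_i|=(\prod_{k=i}^{M-1}|x_k/x_{k+1}|)\,|x_M|\le|x_i/x_{i+1}|$ for $i<M$ and $|x_i/x_j|=\prod_{k=i}^{j-1}|x_k/x_{k+1}|\le|x_i/x_{i+1}|$, every such correction is $O(|x_1/x_2|,\ldots,|x_{M-1}/x_M|,|x_M|)$. Thus each left-hand side meets the hypothesis of the characterization and equals $\varphi_D$. The main obstacle is the nested-limit analysis of \eqref{newtransAL2}, where — unlike the other two cases — one must carry out the full Vandermonde and $q$-shifted-factorial reduction and recognize the resulting telescoping $q$-binomial product; the delicate point is the bookkeeping of the $i>j$ contributions $(B_j/q)^{l_i}$ against the $q$-powers produced by the Vandermonde factor.
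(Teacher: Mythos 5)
Your proposal is correct and follows essentially the same route as the paper: invoke the characterization of $\varphi_D$ from \cite{N}, note that the three functions solve the $q$-Appell--Lauricella system by Corollary \ref{newcorKajiharaeq}\hspace{-1em}\ref{newcorALsol}, and expand in the nested region $0\ll|x_1|\ll\cdots\ll|x_M|\ll1$ using exactly the same reductions of the Vandermonde ratio and the off-diagonal $q$-shifted factorials, with the telescoping $q$-binomial product $\prod_{i}\frac{(C/(B_{i+1}\cdots B_M))_\infty}{(C/(B_i\cdots B_M))_\infty}=\frac{(C)_\infty}{(C/B)_\infty}$ handling the second identity. Your value $(C)_\infty/(C/B)_\infty$ for the leading term of the second solution is the correct one (the paper's displayed $(C)_\infty/(B)_\infty$ is a typo), and it indeed cancels the prefactor $(C/B)_\infty/(C)_\infty$.
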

\begin{proof}
	We put $X_1=x_1/x_2$, $X_2=x_2/x_3$, $\ldots$, $X_{M-1}=x_{M-1}/x_M$, $X_M=x_M$, then we have
	\begin{align}
		&\frac{1-AB_ix_iq^{|l|+l_i}/q}{1-AB_ix_i/q}=1+O(|X_1|+\cdots+|X_M|).
	\end{align}
	Due to
	\begin{align}
		&{\Delta(\{B_ix_i\}_{1\leq i\leq M})}=\prod_{1\leq i<j\leq M}(B_i X_i\cdots X_M-B_jX_j\cdots X_M)=\prod_{1\leq i<j\leq M}X_j\cdots X_M(B_iX_i\cdots X_{j-1}-B_j),
	\end{align}
	we have
	\begin{align}
		\frac{\Delta(\{B_ix_iq^{l_i}\}_{1\leq i\leq M})}{\Delta(\{B_ix_i\}_{1\leq i\leq M})}=\prod_{1\leq i<j\leq M}\frac{q^{l_j}B_j-q^{l_i}B_iX_i\cdots X_{j-1}}{B_j-B_iX_i\cdots X_{j-1}}=\prod_{1\leq i<j\leq M}q^{l_j}+O(|X_1|+\cdot+|X_M|).
	\end{align}
	For $i<j$, we have
	\begin{align}
		\frac{(B_ix_i/x_j)_{l_i}}{(qB_ix_i/(B_jx_j))_{l_i}}=\frac{(B_iX_i\cdots X_{j-1})_{l_i}}{(qB_iX_i\cdots X_{j-1}/B_j)_{l_i}}=1+O(|X_1|+\cdots+|X_M|).
	\end{align}
	For $i>j$, we have
	\begin{align}
		\frac{(B_ix_i/x_j)_{l_i}}{(qB_ix_i/(B_jx_j))_{l_i}}
		=\left(\frac{B_j}{q}\right)^{l_i}\frac{(q^{1-l_i}x_j/(B_ix_i))_{l_i}}{(q^{-l_i}B_jx_j/(B_ix_i))_{l_i}}
		=\left(\frac{B_j}{q}\right)^{l_i}+O(|X_1|+\cdots+|X_M|).
	\end{align}
	In addition, the function $(Dx_i)_{l}$, where $D$ is a constant for $x_1,\ldots,x_M$, is expanded as
	\begin{align}
		(Dx_i)_{l}=1+O(|X_1|+\cdots+|X_M|)\quad (l\in\mathbb{Z}\cup\{\infty\}).
	\end{align}
	Therefore we obtain
	\begin{align}
		&(\mbox{solution \eqref{newqALsol1}})=1+O(|X_1|+\cdots+|X_M|),\\
		&(\mbox{solution \eqref{newqALsol2}})=\sum_{l\in\mathbb{Z}_{\geq0}^M}\left(\frac{C}{B}\right)^{|l|}\prod_{i=1}^M\frac{(B_i)_{l_i}}{(q)_{l_i}}\prod_{1\leq i<j\leq M}B_i^{l_j}+O(|X_1|+\cdots+|X_M|)\notag\\
		&=\prod_{i=1}^M\sum_{l_i=0}^\infty\frac{(B_i)_{l_i}}{(q)_{l_i}}\left(\frac{C}{B_i\cdots B_M}\right)^{l_i}+O(|X_1|+\cdots+|X_M|)=\frac{(C)_\infty}{(B)_\infty}+O(|X_1|+\cdots+|X_M|),\\
		&(\mbox{solution \eqref{newqALsol3}})=1+O(|X_1|+\cdots+|X_M|).
	\end{align}
	This completes the proof.
\end{proof}
\begin{rem}
	The formula \eqref{newtransAL2} is equivalent to the following transformation \cite{Kaji}:
	\begin{align}
		\notag&\frac{(u)_\infty}{(a_1\cdots a_Mu)_\infty}\prod_{1\leq i\leq M}\frac{(cx_i/x_M)_\infty}{((c/a_i)x_i/x_M)_\infty}\sum_{l\in(\mathbb{Z}_{\geq0})^M}u^{|l|}\frac{\Delta(xq^l)}{\Delta(x)}\prod_{1\leq i,j\leq M}\frac{(a_jx_i/x_j)_{l_i}}{(qx_i/x_j)_{l_i}}\prod_{1\leq i\leq M}\frac{(bx_i/x_M)_{l_i}}{(cx_i/x_M)_{l_i}}\\
		&=\varphi_D\left(\begin{array}{c}
			a_1\cdots a_Mbu/c;a_1,\ldots,a_M\\
			a_1\cdots a_M u
		\end{array};\frac{c}{a_1}\frac{x_1}{x_M},\ldots,\frac{c}{a_{M-1}}\frac{x_{M-1}}{x_M},\frac{c}{a_M}\right).\label{remkajitransgivenqAL}
	\end{align}
	This formula was obtained in \cite{Kaji} by using some transformation formula for multiple $q$-hypergeometric series and Andrews' formula \cite{And}:
	\begin{align}
		\varphi_D\left(\begin{array}{c}
			A;\{B_i\}_{1\leq i\leq M}\\C
		\end{array};\{x_i\}\right)=\frac{(A)_\infty}{(C)_\infty}\prod_{i=1}^M\frac{(B_ix_i)_\infty}{(x_i)_\infty}{}_{M+1}\varphi_M\left(\begin{array}{c}
			C/A,\{x_i\}_{1\leq i\leq M}\\\{B_ix_i\}_{1\leq i\leq M}
		\end{array};A\right),\label{andrews}
	\end{align}
	where $|x_i|<1$ and $|A|<1$.
	On the other hand, we get \eqref{newtransAL2} from the viewpoint of $q$-difference equations.
\end{rem}
\begin{rem}
	When $M=1$, the formulas \eqref{newtransAL2} and \eqref{newtransAL3} reduce to 2nd Heine's and Jackson's transformation formula \cite{GR}:
	\begin{align}
		{}_2\varphi_1\left(\begin{array}{c}
			A,B\\C
		\end{array};x\right)&=\frac{(C/B,Bx)_\infty}{(C,x)_\infty}{}_2\varphi_1\left(\begin{array}{c}
			ABx/C,B\\Bx
		\end{array};\frac{C}{B}\right),\\
		&=\frac{(Bx)_\infty}{(x)_\infty}{}_2\varphi_2\left(\begin{array}{c}
			B,C/A\\C,Bx
		\end{array};Ax\right).
	\end{align}
\end{rem}
\begin{rem}
	The formula \eqref{newtransAL3} is a $q$-analog of
	\begin{align}
		F_D\left(\begin{array}{c}
			\alpha;\beta_1\ldots,\beta_M\\
			\gamma
		\end{array};x_1,\ldots,x_M\right)=\prod_{i=1}^M(1-x_i)^{-\beta_i}F_D\left(\begin{array}{c}
			\gamma-\alpha;\beta_1,\ldots,\beta_M\\
			\gamma
		\end{array};\frac{x_1}{x_1-1},\ldots,\frac{x_M}{x_M-1}\right).
	\end{align}
	This formula can be derived by changing $s\to1-s$ in the integral representation \eqref{difintFD} of $F_D$.
\end{rem}
{Using Andrews' formula \eqref{andrews} and putting
	\begin{align}
		A=x,\ B_i=b_i/a_i,\ C=a_{M+1}x,\ x_i=a_i,
	\end{align}
	the series \eqref{newqALsol1}, \eqref{newqALsol2}, \eqref{newqALsol3} can be transformed to ${}_{M+1}\varphi_M\left(\begin{array}{c}
		\{a_i\}_{1\leq i\leq M+1}\\\{b_i\}_{1\leq i\leq M}
	\end{array};x\right)$ as follows.}
\begin{cor}
	Suppose $|x|<1$.
	We have
	\begin{align}
		&\frac{1}{(x)_\infty}\prod_{i=1}^{M+1} (a_ix)_\infty\prod_{i=1}^M \frac{1}{(b_ix)_\infty}
		\\
		\notag&\times \sum_{l\in\mathbb{Z}_{\geq0}^M}q^{\binom{|l|}{2}}\frac{\Delta(\{b_iq^{l_i}\}_{1\leq i\leq M})}{\Delta(\{b_i\}_{1\leq i\leq M})}\frac{(x)_{|l|}}{ (a_{M+1}x)_{|l|}}\\
		\notag&\phantom{\xrightarrow{q\to1}\sum_{l\in\mathbb{Z}_{\geq0}^M}}\times\prod_{i=1}^M \left(\frac{1-b_ixq^{|l|+l_i}/q}{1-b_ix/q}\frac{(b_ix/q)_{|l|}\prod_{j=1}^{M+1} (b_i/a_j)_{l_i}\cdot(Ab_ix/B)^{l_i}q^{\binom{l_i}{2}}}{(a_ix)_{|l|}\prod_{j=1}^M(qb_i/b_j)_{l_i}\cdot (b_i)_{l_i}}\right)\\
		&={}_{M+1}\varphi_M\left(\begin{array}{c}
			\{a_i\}_{1\leq i\leq M+1}\\\{b_i\}_{1\leq i\leq M}
		\end{array};x\right),\\
		\notag&\frac{(Ax/B)_\infty}{(x)_\infty}\sum_{l\in\mathbb{Z}_{\geq0}^M}\left(\frac{Ax}{B}\right)^{|l|}\frac{\Delta(\{b_iq^{l_i}\}_{1\leq i\leq M})}{\Delta(\{b_i\}_{1\leq i\leq M})}\prod_{i=1}^M \frac{\prod_{j=1}^{M+1}(b_i/a_j)_{l_i}}{\prod_{j=1}^M(qb_i/b_j)_{l_i}\cdot (b_i)_{l_i}}\\
		&={}_{M+1}\varphi_M\left(\begin{array}{c}
			\{a_i\}_{1\leq i\leq M+1}\\\{b_i\}_{1\leq i\leq M}
		\end{array};x\right)\quad \left(\left|\frac{Ax}{B}\right|<1\right),\label{generalized1}\\
		\notag&\frac{(a_{M+1}x)_\infty}{(x)_\infty}\sum_{l\in\mathbb{Z}_{\geq0}^M}\left(-\frac{Ax}{a_{M+1}B}\right)^{|l|}\frac{\Delta(\{b_iq^{l_i}\}_{1\leq i\leq M})}{\Delta(\{b_i\}_{1\leq i\leq M})}\frac{(a_{M+1})_{|l|}}{(a_{M+1}x)_{|l|}}\prod_{i=1}^M\frac{\prod_{j=1}^M(b_i/a_j)_{l_i}\cdot b_i^{l_i}q^{\binom{l_i}{2}}}{\prod_{j=1}^M (qb_i/b_j)_{l_i}\cdot (b_i)_{l_i}}\\
		&={}_{M+1}\varphi_M\left(\begin{array}{c}
			\{a_i\}_{1\leq i\leq M+1}\\\{b_i\}_{1\leq i\leq M}
		\end{array};x\right),\label{generalized2}
	\end{align}
	where $A=a_1\cdots a_{M+1}$ and $B=b_1\cdots b_M$.
\end{cor}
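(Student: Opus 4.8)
The plan is to obtain all three identities as specializations of the transformation formulas \eqref{newtransAL1}, \eqref{newtransAL2} and \eqref{newtransAL3} of Proposition \ref{newpropALtrans}, combined with Andrews' formula \eqref{andrews}. First I would substitute $A=x$, $B_i=b_i/a_i$, $C=a_{M+1}x$ and $x_i=a_i$ ($1\le i\le M$) throughout those three formulas. Under this choice one has $B_ix_i=b_i$, $Ax_i=xa_i$, $AB_ix_i=xb_i$ and $C/A=a_{M+1}$, so the Vandermonde ratio $\Delta(\{B_ix_iq^{l_i}\})/\Delta(\{B_ix_i\})$ becomes $\Delta(\{b_iq^{l_i}\})/\Delta(\{b_i\})$ and each $q$-shifted factorial in the summands on the left-hand sides turns into exactly the factor displayed in the corollary.

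Next, the right-hand side $\varphi_D$ must be rewritten as a ${}_{M+1}\varphi_M$, and this is precisely Andrews' formula \eqref{andrews}: under the same substitution its upper parameters $C/A,\{x_i\}$ become $a_{M+1},\{a_i\}$, its lower parameters $\{B_ix_i\}$ become $\{b_i\}$, and its argument $A$ becomes $x$, so the series appearing there is exactly ${}_{M+1}\varphi_M\!\left(\begin{smallmatrix}\{a_i\}_{1\le i\le M+1}\\ \{b_i\}_{1\le i\le M}\end{smallmatrix};x\right)$. Its infinite-product prefactor $\frac{(A)_\infty}{(C)_\infty}\prod_i(B_ix_i)_\infty/(x_i)_\infty$ becomes $\frac{(x)_\infty}{(a_{M+1}x)_\infty}\prod_i(b_i)_\infty/(a_i)_\infty$. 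Using the specialized \eqref{newtransAL1} to replace $\varphi_D$ and dividing by this prefactor to isolate ${}_{M+1}\varphi_M$, the product of the solution's own prefactor $\prod_i\frac{(Ax_i,B_ix_i)_\infty}{(AB_ix_i,x_i)_\infty}$ with the reciprocal of Andrews' prefactor collapses to the compact form $\frac{1}{(x)_\infty}\prod_{i=1}^{M+1}(a_ix)_\infty\prod_{i=1}^M(b_ix)_\infty^{-1}$ stated in the first identity, and similarly for the other two.

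The one bookkeeping point that requires genuine care is the fate of the factor $(AB_ix_i/C)_{l_i}$ in the summand of \eqref{newqALsol1} (and of its analogues): under the substitution it equals $(b_i/a_{M+1})_{l_i}$, and it must be absorbed into the product $\prod_{j=1}^M(B_ix_i/x_j)_{l_i}=\prod_{j=1}^M(b_i/a_j)_{l_i}$ to produce the full product $\prod_{j=1}^{M+1}(b_i/a_j)_{l_i}$ appearing in the corollary. One must likewise check that the monomial factor $(B_iCx_i/B)^{l_i}$, where $B=B_1\cdots B_M=(b_1\cdots b_M)/(a_1\cdots a_M)$ in the proposition's convention, rewrites as $(Ab_ix/B)^{l_i}$ under the corollary's convention $A=a_1\cdots a_{M+1}$, $B=b_1\cdots b_M$, and that the convergence hypotheses ($|x_i|<1$, i.e.\ $|a_i|<1$, together with $|C/B|<1$ for the middle identity, which becomes $|Ax/B|<1$) carry over correctly. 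I expect this matching of the $M$-fold versus $(M+1)$-fold products together with the two conflicting meanings of the symbol $B$ to be the only real obstacle; once it is settled the three identities follow termwise.
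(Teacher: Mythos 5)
Your proposal is correct and is exactly the paper's own route: the paper derives this corollary by substituting $A=x$, $B_i=b_i/a_i$, $C=a_{M+1}x$, $x_i=a_i$ into the three identities of Proposition \ref{newpropALtrans} and converting $\varphi_D$ into ${}_{M+1}\varphi_M$ via Andrews' formula \eqref{andrews}. Your bookkeeping of the factor $(AB_ix_i/C)_{l_i}=(b_i/a_{M+1})_{l_i}$ merging into $\prod_{j=1}^{M+1}(b_i/a_j)_{l_i}$, of the two different meanings of $B$, and of the collapse of the prefactors is accurate.
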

\begin{rem}
	When $M=1$, the formulas \eqref{generalized1} and \eqref{generalized2} reduce to $q$-Euler transformation and Jackson's transformation formula \cite{GR}:
	\begin{align}
		{}_2\varphi_1\left(\begin{array}{c}
			a,b\\c
		\end{array};x\right)&=\frac{(abx/c)_\infty}{(x)_\infty}{}_2\varphi_1\left(\begin{array}{c}
			c/a,c/b\\c
		\end{array};\frac{abx}{c}\right)\\
		&=\frac{(bx)_\infty}{(x)_\infty}{}_2\varphi_2\left(\begin{array}{c}
			b,c/a\\c,bx
		\end{array};ax\right).
	\end{align}
\end{rem}
\section{Summary and discussions}\label{secsum}
We summarize this paper.

The main results of this paper are Theorem \ref{thmkajiint} and Theorem \ref{thmmultisystem}.
In Theorem \ref{thmkajiint}, we gave a transformation formula between Kajihara's $q$-hypergeometric series $W^{M,2}$ and the Jackson integral of Riemann--Papperitz type:
\begin{align}
	\int_{q/a_i}^{q/a_j}\prod_{k=1}^{M+3}\frac{(a_kt)_\infty}{(b_kt)_\infty}d_qt\ \ (a_1\cdots a_{M+3}=q^2 b_1\cdots b_{M+3}).
\end{align}
This formula is a generalization of Bailey's formula \eqref{inttoser}.
We also derived a $q$-difference system $q$-$RP^M$ associated with this integral (Theorem \ref{thmmultisystem}).
It follows that we obtained a $q$-difference system satisfied by Kajihara's $q$-hypergeometric series $W^{M,2}$ (see Corollary \ref{corKajiharaeq}).
The system $q$-$RP^M$ is a multivariable extension of the $q$-Riemann--Papperitz system (see Remark \ref{remqRPM1}, \ref{remqRPM2}).
We gave a basis of the space of solutions for $q$-$RP^M$ in Theorem \ref{thmindependence}.

In section \ref{secdegene}, we considered degenerations of the system $q$-$RP^M$.
The degeneration $q$-$RP^M_{\mathrm{degene}}$, defined in \eqref{hatem2dash}--\eqref{multidegeneeq7}, of the system includes the $q$-Appell--Lauricella system.
From this inclusion, we gave solutions for the $q$-Appell--Lauricella system in terms of degenerations of $W^{M,2}$ (Corollary \ref{newcorALsol}), and we showed  transformation formulas between these solutions and the $q$-Appell--Lauricella series $\varphi_D$ (Proposition \ref{newpropALtrans}).


There are many related open problems.


\begin{itemize}
%
	\item[(1)] 
	We gave a $q$-difference system $q$-$RP^M$ satisfied by Kajihara's $q$-hypergeometric series $W^{M,2}$.
	The way to get this system is due to a transformation formula of the Jackson integral of Riemann--Papperitz type:
	\begin{align}\label{futureint}
		\int_C \prod_{k=1}^{M+3}\frac{(a_kt)_\infty}{(b_kt)_\infty}d_qt.
	\end{align}
	It is important to get the system directly from the definition of $W^{M,2}$.
	If we can derive the system directly, it would be extended {to} more general $q$-/elliptic hypergeometric series.
	Note that an elliptic analog of Kajihara's $q$-hypergeometric series and its transformation formula are studied in \cite{KN}.
	\item[(2)]
	We gave integral solutions for the $q$-difference system $q$-$RP^M$.
	The connection problem for the system $q$-$RP^M$, i.e. to find linear relations among these integrals, is important.
	This includes the connection problem for the variant of $q$-hypergeometric equation of degree three \cite{HMST} as a special case.
	The problem for $q$-$RP^M$ is discussed in \cite{FNconn}.
	\item[(3)]
	The $q$-Appell--Lauricella series
	\begin{align}
		\varphi_D\left(\begin{array}{c}
			a;\{b_i\}_{1\leq i\leq M}\\
			c
		\end{array};\{x_i\}_{1\leq i\leq M}\right),
	\end{align}
	is characterized as the solution of the form $1+O(|x_1/x_2|+\cdots+|x_{M-1}/x_M|+|x_M|)$ for the $q$-Appell--Lauricella system \cite{N}.
	In the general theory of $q$-difference systems in several variables, local solutions should be characterized by the asymptotic behavior near the singularity, i.e. origin or infinity, in some prescribed sector.
	For the general theory, see \cite{Ao1,Ao2}.
	The series solution $W$ \eqref{defW} of the multivariable system $q$-$RP^M$ should be characterized by some conditions.
	It is an important problem and would be related to the above problem (1).
	In the last of section \ref{secdegene}, we get relations among the $q$-Appell--Lauricella series $\varphi_D$ and degenerations of $W^{M,2}$ by using a characterization of solutions for the $q$-Appell--Lauricella system.
	Hence, a characterization for the solution $W^{M,2}$  may be also useful to get some transformation formulas.
	\item[(4)]
	In this paper, we studied the Jackson integral \eqref{futureint} of Riemann--Papperitz type.
	There are many interesting hypergeometric type Jackson integral.
	For example, a $q$-Selberg type integral
	\begin{align}
		\int_C \prod_{i=1}^n \left(t_i^\alpha \prod_{r=1}^m\frac{(q a_r^{-1}t_i)_\infty}{(b_r t_i)_\infty}\right)\prod_{1\leq j<k\leq n}t_j^{2\tau-1}\frac{(q^{1-\tau}t_k/t_j)_\infty}{(q^\tau t_k/t_j)_\infty}\frac{d_qt_1}{t_1}\wedge\cdots \wedge\frac{d_qt_n}{t_n},
	\end{align}
	is an active area of research for special functions, combinatorics, mathematical physics and orthogonal polynomials.
	For details and histories, see \cite{It,It2023}.
	To consider a ``Riemann--Papperitz type extension of the $q$-Selberg integral'' is interesting and important.
	In addition, there are many important integrals for the differential case.
	{Examples include the Aomoto--Gelfand hypergeometric integrals and the GKZ hypergeometric integrals (see \cite{AK,GKZint,Matsubara,Y}).}
	To study $q$-analogs of these integrals is also interesting.
	We remark that a system of first order $q$-difference equations, i.e. a Pfaffian system, for the $q$-Selberg integral is discussed in \cite{It,It2023}.
	Rewriting $q$-$RP^M$ to a Pfaffian system is valuable.
	%
	\item[(5)]
	In \cite{NY1,NY2}, the $q$-Garnier system, which is a multivariable extension of the $q$-Painlev\'{e} VI equation, is considered by means of the Pad\'{e} interpolation of some product $\psi$ of the $q$-shifted factorial.
	For the Pad\'{e} method to Painlev\'{e} equations, see also a recent book \cite{NYbook}.
	In \cite{SZ}, the Pad\'{e} interpolation of some product of Jacobi's theta function is expressed explicitly in terms of the very well-poised elliptic  hypergeometric series ${}_{12}E_{11}$.
	This function ${}_{12}E_{11}$ has a transformation formula of the form:
	\begin{align}
		{}_{12}E_{11}=(\mbox{product of theta functions})\times {}_{12}E_{11}.
	\end{align}
	That is the reason why the Pad\'{e} problem is solved by using ${}_{12}E_{11}$.
	In this paper we got some transformation formulas for Kajihara's $q$-hypergeometric function $W^{M,2}$ (see Corollary \ref{corkajilinear}).
	{It would be expected that the Pad\'{e} problem or its certain generalization associated with $\psi$ can be solved by using $W^{M,2}$ similar to Spiridonov--Zhedanov's method \cite{SZ}, and a special solution for the $q$-Garnier system in terms of $W^{M,2}$ may be obtained.}
	
	
	\item[(6)]
	 The function ${}_8W_7$ is also known as the Askey--Wilson function \cite{KS}, related to orthogonal polynomials.
	The Koornwinder polynomial is a multivariable extension of the Askey--Wilson polynomial, and it is an eigenfunction of Koornwinder's $q$-difference operator.
	To give a solution for the bispectral problem of Koornwinder operator explicitly is important for mathematical physics (see \cite{vM,vMS,S}, and \cite{NS} for an explicit form of Macdonald's case).
	In this paper we discussed a certain extension $W^{M,2}$ of the Askey--Wilson function from the viewpoint of the integral representation of it.
	We hope that the series $W^{M,2}$ is an eigenfunction of Koornwinder operator of special eigenvalue.
	Note that an integral solution for the bispectral problem of the Macdonald's operator is given by a multiple Jackson integral in \cite{NS}.
	For general eigenvalue, a multiple extension of \eqref{intmulti} would be needed.
	
	\item[(7)]
	The variants of $q$-hypergeometric equation \cite{HMST} are introduced as special cases of degenerations for the eigenvalue problem of Ruijsenaars--van Diejen operator of one variable case.
	The system $q$-$RP^M$ is a multivariable extension of this variant.
	Its relation to the Ruijsenaars--van Diejen operator \cite{vD,Ru} is an interesting problem.
	
\end{itemize}
%

\appendix
\section{Appendix}\label{appA}
In this appendix, we check convergences of multiple series and justify termwise limits studied in this paper.
We first remark that discussion in \cite[Appendix B]{MN2012} is useful and informative.

The following lemmas are fundamental.
\begin{lemm}[multiple ratio test, cf \cite{AppKam,MN2012}]
	We put
	\begin{align}
		g_m(l_1,\ldots,l_M)=\left|\frac{f(l_1,\ldots,l_{m-1},l_m+1,l_{m+1},\ldots,l_M)}{f(l_1,\ldots,l_M)}\right|\quad (1\leq m\leq M).
	\end{align}
	If 
	\begin{align}
		\lim_{\epsilon\to\infty}g_m(\epsilon l_1,\ldots,\epsilon l_M)<1\quad (1\leq m\leq M),
	\end{align}
	then the multiple sum
	\begin{align}
		\sum_{l\in\mathbb{Z}_{\geq0}^M}f(l_1,\ldots,l_M),
	\end{align}
	converges absolutely.
\end{lemm}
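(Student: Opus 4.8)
The plan is to dominate the $M$-fold sum by a single geometric-type series, generalizing the ordinary ratio test in one variable. The argument proceeds in three stages: upgrade the ray-wise hypotheses to a uniform contraction estimate, propagate this estimate to a pointwise bound on $|f(l)|$, and finally sum using the polynomial growth of the number of lattice points on each diagonal level.

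First I would produce a uniform contraction estimate: a constant $\rho\in(0,1)$ and an integer $N$ such that $g_m(l)\le\rho$ for every $1\le m\le M$ and every $l\in\mathbb{Z}_{\ge0}^M$ with $|l|:=l_1+\cdots+l_M\ge N$. The hypothesis controls each $g_m$ along the rays emanating from the origin; passing to the closed standard simplex $\Sigma=\{t\in\mathbb{R}_{\ge0}^M:|t|=1\}$, it asserts $\lim_{\epsilon\to\infty}g_m(\epsilon t)<1$ for every direction $t\in\Sigma$. Regarding these limits as a function of $t$ on the compact set $\Sigma$ and invoking continuity, their supremum over $t$ and $m$ is some $\rho_0<1$; choosing $\rho\in(\rho_0,1)$ and a large enough radius then yields the desired bound. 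Some additional care is required on the faces of $\Sigma$ where one or more coordinates vanish, but each such face is itself a lower-dimensional simplex covered by the same hypothesis.

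Next I would turn this contraction into a pointwise estimate. Given $l$ with $|l|=n\ge N$, choose a base index $l^{(0)}$ with $l^{(0)}_i\le l_i$ and $|l^{(0)}|=N$, and reach $l$ from $l^{(0)}$ by incrementing one coordinate at a time. Every intermediate index still has norm $\ge N$, so each of the $n-N$ increments multiplies $|f|$ by the relevant $g_m\le\rho$; hence $|f(l)|\le C\rho^{\,n-N}$, where $C=\max\{|f(l^{(0)})|:|l^{(0)}|=N\}$ is finite. Thus $|f(l)|\le C'\rho^{|l|}$ for all $l$, with $C'=C\rho^{-N}$.

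Finally I would sum over the diagonal levels. The number of $l\in\mathbb{Z}_{\ge0}^M$ with $|l|=n$ equals $\binom{n+M-1}{M-1}$, a polynomial in $n$ of degree $M-1$, so
\[
\sum_{l\in\mathbb{Z}_{\ge0}^M}|f(l)|\le C'\sum_{n=0}^{\infty}\binom{n+M-1}{M-1}\rho^{\,n}<\infty,
\]
since a polynomial multiplied by $\rho^n$ with $0<\rho<1$ is summable. The main obstacle is the first stage: converting the directional limits into a genuinely uniform contraction valid for all large $|l|$, which is precisely where the compactness of $\Sigma$ and the behaviour near its boundary faces must be handled with care.
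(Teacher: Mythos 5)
The paper does not actually prove this lemma: it is quoted from Appell--Kamp\'e de F\'eriet and Milne--Newcomb as a known tool, so there is no internal argument to compare against, and your proposal has to stand on its own. Your stages 2 and 3 are correct and standard: once one has $\rho<1$ and $N$ with $g_m(l)\le\rho$ for all $m$ and all $|l|\ge N$, telescoping the ratios along a monotone lattice path gives $|f(l)|\le C'\rho^{|l|}$, and summing level by level against $\binom{n+M-1}{M-1}\rho^n$ finishes the proof.

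The genuine gap is stage 1, which you flag as the main obstacle but then dispose of by ``invoking continuity.'' The hypothesis is only a family of pointwise limits along (rational) rays; the function $t\mapsto\lim_{\epsilon\to\infty}g_m(\epsilon t)$ is not given to be continuous on the simplex $\Sigma$, and even if it were, pointwise convergence along each ray does not produce a single $N$ with $g_m(l)\le\rho<1$ for all $|l|\ge N$ --- for that you would need the convergence to be uniform in the direction. For a completely arbitrary $f$ the implication is in fact false. Take $M=2$ and
\begin{align}
	f(l_1,l_2)=2^{-l_1-l_2}\ \text{except}\ f(n,n^2)=2^{\,n+n^2}\ (n\ge1).
\end{align}
For each fixed direction $(l_1,l_2)$, the points $(\epsilon l_1,\epsilon l_2)$, $(\epsilon l_1+1,\epsilon l_2)$, $(\epsilon l_1,\epsilon l_2+1)$ lie on the parabola $l_2=l_1^2$ for at most finitely many $\epsilon$, so every directional limit of $g_1$ and $g_2$ equals $1/2<1$; yet $\sum_n f(n,n^2)=\sum_n 2^{\,n+n^2}$ diverges. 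So the lemma is only true under an implicit structural assumption on $f$ --- in the intended applications the ratios $g_m$ are built from $q$-shifted factorials, so that $g_m(\epsilon t)$ converges \emph{uniformly} on $\Sigma$ to a continuous limit, and your compactness argument then does go through. To complete the proof you must either state and use that assumption (e.g.\ that $g_m(l)$ converges uniformly in the direction $l/|l|$ as $|l|\to\infty$), or verify the uniform contraction $g_m(l)\le\rho$ for $|l|\ge N$ directly for the concrete terms to which the test is applied.
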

\begin{lemm}[Tannery's theorem, cf \cite{Br}]
	If $|p_l(k)|<P_l$ and $\displaystyle \sum_{l\in\mathbb{Z}_{\geq0}^M}P_l<\infty$, then we have
	\begin{align}
		\lim_{n\to\infty}\sum_{l\in\mathbb{Z}_{\geq0}^M}p_l(k)=\sum_{l\in\mathbb{Z}_{\geq0}}\tilde{p}_l,
	\end{align}
	where $\tilde{p}_l=\lim_{k\to\infty}p_l(k)$.
\end{lemm}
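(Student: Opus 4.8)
The plan is to prove this by the standard $\varepsilon/3$ tail-splitting argument, which is the counting-measure analogue of Lebesgue's dominated convergence theorem; I will write the limit variable as $k$ throughout (the $n$ in the displayed limit and the $k$ in $p_l(k)$ denote the same index). First I would record that the two hypotheses already guarantee everything converges: since $|p_l(k)| < P_l$ with $\sum_{l\in\mathbb{Z}_{\geq0}^M} P_l < \infty$, the Weierstrass $M$-test shows that $S(k) = \sum_{l\in\mathbb{Z}_{\geq0}^M} p_l(k)$ is absolutely convergent for each fixed $k$. Moreover, passing to the limit in $|p_l(k)| < P_l$ gives $|\tilde p_l| \le P_l$, so $S = \sum_{l\in\mathbb{Z}_{\geq0}^M} \tilde p_l$ is absolutely convergent as well and the target equality $\lim_{k\to\infty} S(k) = S$ is at least meaningful.

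Next I would fix $\varepsilon > 0$ and use the summability of $(P_l)$ to choose a finite subset $F \subset \mathbb{Z}_{\geq0}^M$ with $\sum_{l \notin F} P_l < \varepsilon/3$. The uniform domination is exactly what lets me control both tails at once, independently of $k$: one has $\left| \sum_{l \notin F} p_l(k) \right| \le \sum_{l \notin F} P_l < \varepsilon/3$ for every $k$, and likewise $\left| \sum_{l \notin F} \tilde p_l \right| < \varepsilon/3$. On the finite set $F$ there are only finitely many summands, so the termwise limits combine into $\lim_{k\to\infty} \sum_{l \in F} p_l(k) = \sum_{l \in F} \tilde p_l$, and hence there is $K$ with $\left| \sum_{l \in F} p_l(k) - \sum_{l \in F} \tilde p_l \right| < \varepsilon/3$ for all $k \ge K$. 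Writing $S(k) - S$ as the $F$-difference plus the two tails and applying the triangle inequality yields $|S(k) - S| < \varepsilon$ for $k \ge K$, which is the assertion.

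There is no genuine obstacle here, as this is a classical lemma; the one point that must be handled with care is that the tail estimate be uniform in $k$, and this is precisely the content of the hypothesis that the dominating sequence $(P_l)$ is summable and $k$-independent. The multi-index structure of $\mathbb{Z}_{\geq0}^M$ plays no essential role: the argument uses only that the index set is countable and that a convergent series of nonnegative terms has arbitrarily small tails over the complements of large finite sets. I would therefore keep the proof short and self-contained rather than invoking the full dominated convergence machinery, so that the lemma stands on its own as the justification of all the termwise limits (such as \eqref{just1}, \eqref{just2}, \eqref{just3}, \eqref{just4}, \eqref{just5}, \eqref{just6}--\eqref{just10}) collected in this appendix.
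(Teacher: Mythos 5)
Your proof is correct: the $\varepsilon/3$ tail-splitting argument, with the $k$-independent summable majorant $(P_l)_{l\in\mathbb{Z}_{\geq 0}^M}$ giving uniform control of both tails and the finite set $F$ handling the termwise limits, is the standard proof of Tannery's theorem. The paper itself gives no proof of this lemma, citing Bromwich instead, and your argument is exactly the classical one found there (the discrete dominated convergence theorem), so there is nothing to reconcile; you also correctly identify that the $n$ in the displayed limit is a typo for $k$ and that the multi-index set $\mathbb{Z}_{\geq 0}^M$ plays no role beyond countability.
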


{
Due to the limit
\begin{align}
	&\lim_{k\to\infty}(aq^k)_l=1,
\end{align}
there exist positive values $D_1$ and $D_2$ which are independent from $k$ and $l$, such that
\begin{align}
	D_1<|(aq^k)_l|<D_2.\label{ine1}
\end{align}
We also have
\begin{align}
	D_3<|(aq^{k-l})_l|<D_4,\label{ine2}
\end{align}
where $k-l\geq0$, $D_3$ and $D_4$ are positive values independent from $k$ and $l$.
By the formula
\begin{align}
	(aq^{-k})_l=(-a)^l q^{\binom{l}{2}} (q^{-l+1+k}/a)_l,
\end{align}
there exist positive values $D_5$ and $D_6$ which are independent from $k$ and $l$, such that
\begin{align}
	D_5|a^l q^{\binom{l}{2}}|<|(aq^{-k})_l|<D_6|a^lq^{\binom{l}{2}}|.\label{ine2.5}
\end{align}
We also have
\begin{align}
	D_7\left|\frac{b}{a}\right|^l<\left|\frac{(aq^{-k})_l}{(bq^{-k})_l}\right|<D_8\left|\frac{b}{a}\right|^l,\label{ine3}
\end{align}
where $k-l\geq0$, $D_7$ and $D_8$ are positive values independent from $k$ and $l$.
The above inequalities are used frequently in the following.
}

\subsection{Convergence of Kajihara's series \eqref{defkaji}}\label{appA1}
	We put
	\begin{align}
		&f(l_1,\ldots,l_M)=z^{|l|}\frac{\Delta(x q^l)}{\Delta(x)}\prod_{i=1}^M\left(\frac{1-ax_iq^{|l|+l_i}}{1-ax_i}\frac{(ax_i)_{|l|}\prod_{j=1}^{M+N}(x_iu_j)_{l_i}}{\prod_{j=1}^M(qx_i/x_j)_{l_i}\prod_{k=1}^N(aqx_i/v_k)_{l_i}}\right)\frac{\prod_{j=1}^N(v_k)_{|l|}}{\prod_{j=1}^{M+N}(aq/u_j)_{|l|}},\\
		&g_m(l_1,\ldots,l_M)=\left|\frac{f(l_1,\ldots,l_{m-1},l_m+1,l_{m+1},\ldots,l_M)}{f(l_1,\ldots,l_M)}\right|\quad (1\leq m\leq M).
	\end{align}
	Then the series \eqref{defkaji} is given as
	\begin{align}
		\sum_{l\in\mathbb{Z}_{\geq0}^M}f(l_1,\ldots,l_M).
	\end{align}
	We easily find 
	\begin{align}
		\lim_{\epsilon\to\infty}g_m(\epsilon l_1,\ldots,\epsilon l_M)=|z|.
	\end{align}
	Due to the multiple ratio test, Kajihara's series \eqref{defkaji} converges if $|z|<1$.
	
\subsection{Justification of the limit \eqref{just1}: $k\to\infty$}
We put
\begin{align}
	&P_l=q^{|l|}\frac{\Delta(xq^l)}{\Delta(x)}\prod_{i=1}^M\left(\frac{1-ax_iq^{|l|+l_i}}{1-ax_i}\frac{(ax_i)_{|l|}\prod_{j=1}^{M+2}(x_ib_j)_{l_i}}{\prod_{j=1}^M (qx_i/x_j)_{l_i}\cdot(aqx_i/c,aqx_i/(\mu c))_{l_i}}\right)\frac{(c,\mu c)_{|l|}}{\prod_{j=1}^{M+2}(aq/b_j)_{|l|}},
	\\
	&p_l(k)=P_l\cdot \prod_{i=1}^M \frac{(x_ib_{M+3} q^k)_{l_i}}{(aqx_iq^k)_{l_i}}\cdot \frac{(q^{-k})_{|l|}}{(aqq^{-k}/b_{M+3})_{|l|}}.
\end{align}
Due to \eqref{ine1} and \eqref{ine3},there exist a positive constant $D$ which is independent from $k$ and $l$ such that
\begin{align}
	|p_l(k)|<D|P_l|\left|\frac{b_{M+3}}{aq}\right|^l.
\end{align}
The series
\begin{align}
	\sum_{l\in\mathbb{Z}_{\geq0}^M}|P_l|\left|\frac{b_{M+3}}{aq}\right|^l,
\end{align}
converges if $|b_{M+3}/a|<1$.
This convergence can be checked similar to discussion in Appendix \ref{appA1}.
By Tannery's theorem, we have
\begin{align}
	\lim_{k\to\infty}\sum_{l\in\mathbb{Z}_{\geq0}^M}p_l(k)=\sum_{l\in\mathbb{Z}_{\geq0}^M}\lim_{k\to\infty}p_l(k).
\end{align}
This justify the limit \eqref{just1}.

\subsection{Justification of the limits \eqref{just2} and \eqref{just3}: $k\to\infty$}
We recall that $k=2K-1$ ,$\mu=a^3q^2/(c^2b_1b_2\cdots b_{M+2}x_1x_2\cdots x_M)$ and
\begin{align}
	C_l=\frac{1-\mu q^{-k}q^{2l}}{1-\mu q^{-k}}\prod_{i=1}^M\frac{(\mu cq^{-k}/(ax_i))_l}{(aqx_i/c)_l}\prod_{j=1}^{M+2}\frac{(aq/(cb_j))_l}{(\mu cb_jq^{-k}/a)_l}\cdot\frac{(\mu q^{-k},a q q^{-k}/(c b_{M+3}),\mu c,q^{-k})_l}{(q,\mu c b_{M+3}/a,q q^{-k}/c,q \mu)_l}q^l.
\end{align}
Due to \eqref{ine3}, for $0\leq l\leq K-1$, there is a positive constant $D$ independent from $k$ and $l$ such that
\begin{align}
	|C_l|<D\left|\prod_{i=1}^M\frac{1}{(aqx_i/c)_l}\prod_{j=1}^{M+2}{(aq/(cb_j))_l}\cdot\frac{(\mu c)_l}{(q,\mu c b_{M+3}/a,q \mu)_l}q^l\right|.
\end{align}
The series
\begin{align}
	\sum_{l=0}^\infty\left|\prod_{i=1}^M\frac{1}{(aqx_i/c)_l}\prod_{j=1}^{M+2}{(aq/(cb_j))_l}\cdot\frac{(\mu c)_l}{(q,\mu c b_{M+3}/a,q \mu)_l}q^l\right|,
\end{align}
converges, hence, the limit \eqref{just2} is justified by Tannery's theorem.
The limit \eqref{just3} is also justified in a similar way.
Key formulas for this case are $(a)_{k-l}=(a)_k/(aq^{k-l})_l$ and $(aq^{-l})_l=(-a)^{l}q^{-l(l-1)/2}(q/a)_l$.

\subsection{Justification of the limit \eqref{just4}: $a_{M+3}\to\infty$}
We recall that $b_{M+3}=q^\lambda a_{M+3}$, $|q^{\lambda+1}|<1$ and
\begin{align}
	C_0(t)=t^\lambda\frac{\theta(b_{M+3}t)}{\theta(a_{M+3}t)}.
\end{align}
For $1\leq j\leq M+2$, we have
\begin{align}
	\notag &C_0\left(\frac{q}{a_j}\right)\int_0^{q/a_j}\prod_{i=1}^{M+3}\frac{(a_i t)_\infty}{(b_i t)_\infty}d_qt\\
	\notag&=\int_0^{q/a_j}C_0(t)\prod_{i=1}^{M+3}\frac{(a_i t)_\infty}{(b_i t)_\infty}d_qt\\
	&=(1-q)\left(\frac{q}{a_j}\right)^{\lambda +1}\prod_{i=1}^{M+2}\frac{(a_iq/a_j)_\infty}{(b_iq/a_j)_\infty}\cdot \frac{(a_j/b_{M+3})_\infty}{(a_j/a_{M+3})_\infty}\sum_{l=0}^\infty\prod_{i=1}^{M+2}\frac{(b_iq/a_j)_l}{(a_iq/a_j)_l}\cdot \frac{(a_jq^{-l}/a_{M+3})_l}{(a_jq^{-l}/(q^\lambda a_{M+3}))_l}q^l.
\end{align}
According to \eqref{ine3}, there exists a positive constant $D$ independent from $a_{M+3}$ and $l$ such that
\begin{align}
	\left|\frac{(a_jq^{-l}/a_{M+3})_l}{(a_jq^{-l}/(q^\lambda a_{M+3}))_l}\right|<D|q^\lambda|.
\end{align}
Since $|q^{\lambda +1}|<1$, the series 
\begin{align}
	\sum_{l=0}^\infty\left|\prod_{i=1}^{M+2}\frac{(b_iq/a_j)_l}{(a_iq/a_j)_l}\cdot {(q^{\lambda+1})}^l\right|
\end{align}
converges.
Therefore the limit \eqref{just4} are justified by Tannery's theorem.

\subsection{Justification of the limit \eqref{just5}: $a_{M+2}\to0$}
We recall $b_{M+2}=q^\beta a_{M+2}$.
For $1\leq j\leq M+1$, we have
\begin{align}
	\notag&\int_0^{q/a_j}\prod_{i=1}^{M+2}t^\lambda\frac{(a_i t)_\infty}{(b_i t)_\infty}d_qt\\
	&=(1-q)\left(\frac{q}{a_j}\right)^{\lambda+1}\prod_{i=1}^{M+2}\frac{(a_i q/a_j)_\infty}{(b_i q/a_j)_\infty}\sum_{l=0}^\infty\prod_{i=1}^{M+1}\frac{(b_iq/a_j)_l}{(a_iq/a_j)_l}\cdot \frac{(a_{M+2}q/a_j)_l}{(q^\beta a_{M+2}q/a_j)_l}(q^{\lambda+1})^l.
\end{align}
Using \eqref{ine1}, we have
\begin{align}
	\left|\frac{(a_{M+2}q/a_j)_l}{(b_{M+2}q/a_j)_l}\right|<D,
\end{align}
where $D$ is a constant independent from $a_{M+2}$ and $l$
When $|q^{\lambda+1}|<1$, the series 
\begin{align}
	\sum_{l=0}^\infty\left|\prod_{i=1}^{M+1}\frac{(b_iq/a_j)_l}{(a_iq/a_j)_l}(q^{\lambda+1})^l\right|,
\end{align}
converges, therefore the limit \eqref{just5} is justified.

\subsection{Justification of the limits \eqref{just6} and \eqref{just7}: $a_{M+3}\to\infty$}
We recall that
\begin{align}
	\notag&W^{M,2}\left(\{a_i\}_{1\leq i\leq M};\frac{qb_{M+3}}{a_{M+2}a_{M+3}};\left\{\frac{1}{b_j}\right\}_{1\leq j\leq M+2};\frac{qb_{M+3}}{a_{M+2}},\frac{qb_{M+3}}{a_{M+3}};\frac{a_{M+1}}{b_{M+3}}\right)\\
	\notag&=\sum_{l\in\mathbb{Z}_{\geq0}^M}\left(\frac{a_{M+1}}{b_{M+3}}\right)^{|l|}\frac{\Delta (\{a_iq^{l_i}\}_{1\leq i\leq M})}{\Delta(\{a_i\}_{1\leq i\leq M})}\frac{(qb_{M+3}/a_{M+2},qb_{M+3}/a_{M+3})_{|l|}}{\prod_{j=1}^{M+2}(q^2 b_jb_{M+3}/(a_{M+2}a_{M+3}))_{|l|}}\\
	&\phantom{=}\times\prod_{i=1}^M\left(\frac{1-(qb_{M+3}/(a_{M+2}a_{M+3}))a_iq^{|l|+l_i}}{1-(qb_{M+3}/(a_{M+2}a_{M+3}))a_i}\frac{(qa_ib_{M+3}/(a_{M+2}a_{M+3}))_{|l|}\prod_{j=1}^{M+2}(a_i/b_j)_{l_i}}{\prod_{j=1}^M(qa_i/a_j)_{l_i} \cdot(qa_i/a_{M+2},qa_i/a_{M+3})_{l_i}}\right).
\end{align}
We put
\begin{align}
	\notag&p_l=\left(\frac{a_{M+1}}{b_{M+3}}\right)^{|l|}\frac{\Delta (\{a_iq^{l_i}\}_{1\leq i\leq M})}{\Delta(\{a_i\}_{1\leq i\leq M})}\frac{(qb_{M+3}/a_{M+2},qb_{M+3}/a_{M+3})_{|l|}}{\prod_{j=1}^{M+2}(q^2 b_jb_{M+3}/(a_{M+2}a_{M+3}))_{|l|}}\\
	&\phantom{=}\times\prod_{i=1}^M\left(\frac{1-(qb_{M+3}/(a_{M+2}a_{M+3}))a_iq^{|l|+l_i}}{1-(qb_{M+3}/(a_{M+2}a_{M+3}))a_i}\frac{(qa_ib_{M+3}/(a_{M+2}a_{M+3}))_{|l|}\prod_{j=1}^{M+2}(a_i/b_j)_{l_i}}{\prod_{j=1}^M(qa_i/a_j)_{l_i} \cdot(qa_i/a_{M+2},qa_i/a_{M+3})_{l_i}}\right).
\end{align}
First we justify the limit \eqref{just6}.
We have
\begin{align}
	\notag&p_l |_{b_{M+3}=q^\lambda a_{M+3}}=a_{M+1}^{|l|}\frac{\Delta (\{a_iq^{l_i}\}_{1\leq i\leq M})}{\Delta(\{a_i\}_{1\leq i\leq M})}\frac{(q^{\lambda+1}a_{M+3}/a_{M+2},q^{\lambda+1})_{|l|}}{\prod_{j=1}^{M+2}(q^{\lambda+2} b_j/a_{M+2})_{|l|}}\\
	\notag&\phantom{=}\times\prod_{i=1}^M\left(\frac{1-(q^{\lambda+1}/a_{M+2})a_iq^{|l|+l_i}}{1-(q^{\lambda+1}/a_{M+2})a_i}\frac{(q^{\lambda+1}a_i/a_{M+2})_{|l|}\prod_{j=1}^{M+2}(a_i/b_j)_{l_i}}{\prod_{j=1}^M(qa_i/a_j)_{l_i} \cdot(qa_i/a_{M+2})_{l_i}}\right)\\
	&\phantom{=}\times \left(\frac{1}{q^\lambda a_{M+3}}\right)^{|l|}(q^{\lambda+1}a_{M+3}/a_{M+2})_{|l|}\prod_{i=1}^M\frac{1}{(qa_i/a_{M+3})_{l_i}}.
\end{align}
Due to \eqref{ine1} and \eqref{ine2.5}, there exists a positive constant $D$ independent from $a_{M+3}$ and $l$ such that
\begin{align}
	\left|\left(\frac{1}{q^\lambda a_{M+3}}\right)^{|l|}(q^{\lambda+1}a_{M+3}/a_{M+2})_{|l|}\prod_{i=1}^M\frac{1}{(qa_i/a_{M+3})_{l_i}}\right|<D\left|\left(\frac{q}{a_{M+2}}\right)^{|l|}q^{\binom{|l|}{2}}\right|.
\end{align}
Using the multiple ratio test, we find that the series
\begin{align}
	\notag&\sum_{l\in\mathbb{Z}_{\geq0}}\biggl|a_{M+1}^{|l|}\frac{\Delta (\{a_iq^{l_i}\}_{1\leq i\leq M})}{\Delta(\{a_i\}_{1\leq i\leq M})}\frac{(q^{\lambda+1}a_{M+3}/a_{M+2},q^{\lambda+1})_{|l|}}{\prod_{j=1}^{M+2}(q^{\lambda+2} b_j/a_{M+2})_{|l|}}\\
	\notag&\phantom{===}\times\prod_{i=1}^M\left(\frac{1-(q^{\lambda+1}/a_{M+2})a_iq^{|l|+l_i}}{1-(q^{\lambda+1}/a_{M+2})a_i}\frac{(q^{\lambda+1}a_i/a_{M+2})_{|l|}\prod_{j=1}^{M+2}(a_i/b_j)_{l_i}}{\prod_{j=1}^M(qa_i/a_j)_{l_i} \cdot(qa_i/a_{M+2})_{l_i}}\right)\biggr|\\
	&\phantom{===}\times\left|\left(\frac{q}{a_{M+2}}\right)^{|l|}q^{\binom{|l|}{2}}\right|,
\end{align}
converges for any $a_i$, $b_i$ and $\lambda$.
The key factor is $q^{\binom{|l|}{2}}$.
This factor satisfies
\begin{align}
	\left|\frac{q^{\binom{\epsilon l_1+\cdots+\epsilon l_{m-1}+\epsilon l_{m}+1+\epsilon l_{m+1}+\cdots+\epsilon l_M}{2}}}{q^{\binom{\epsilon l_1+\cdots+\epsilon l_M}{2}}}\right|=|q^{\epsilon l_1+\cdots+\epsilon l_M}|\xrightarrow{\epsilon\to\infty}0.
\end{align}
Therefore the limit \eqref{just6} is justified by Tannery's theorem.

Next, we justify the limit \eqref{just7}.
We have
\begin{align}
	\notag&(s(b_{M+2},b_{M+3}).p_l)|_{b_{M+3}=q^\lambda a_{M+3}}\\
	\notag&=\left(\frac{a_{M+1}}{b_{M+2}}\right)^{|l|}\frac{\Delta (\{a_iq^{l_i}\}_{1\leq i\leq M})}{\Delta(\{a_i\}_{1\leq i\leq M})}\frac{(qb_{M+2}/a_{M+2})_{|l|}}{ (q^{\lambda+2}b_{M+2}/a_{M+2})_{|l|}}\prod_{i=1}^M\left(\frac{\prod_{j=1}^{M+1}(a_i/b_j)_{l_i}}{\prod_{j=1}^M(qa_i/a_j)_{l_i} \cdot(qa_i/a_{M+2})_{l_i}}\right).
	\\
	\notag&\phantom{=}\times \prod_{i=1}^M\left(\frac{1-(qb_{M+2}/(a_{M+2}a_{M+3}))a_iq^{|l|+l_i}}{1-(qb_{M+2}/(a_{M+2}a_{M+3}))a_i}\frac{(qa_ib_{M+2}/(a_{M+2}a_{M+3}))_{|l|} (a_iq^{-\lambda}/a_{M+3})_{l_i}}{(qa_i/a_{M+3})_{l_i}}\right)\\
	&\phantom{=}\times \frac{(qb_{M+2}/a_{M+3})_{|l|}}{\prod_{j=1}^{M+1}(q^2 b_jb_{M+2}/(a_{M+2}a_{M+3}))_{|l|}}.
\end{align}
Using \eqref{ine1}, we have
\begin{align}
	\notag&\phantom{=}\biggl| \prod_{i=1}^M\left(\frac{1-(qb_{M+2}/(a_{M+2}a_{M+3}))a_iq^{|l|+l_i}}{1-(qb_{M+2}/(a_{M+2}a_{M+3}))a_i}\frac{(qa_ib_{M+2}/(a_{M+2}a_{M+3}))_{|l|} (a_iq^{-\lambda}/a_{M+3})_{l_i}}{(qa_i/a_{M+3})_{l_i}}\right)\\
	&\phantom{=}\times \frac{(qb_{M+2}/a_{M+3})_{|l|}}{\prod_{j=1}^{M+1}(q^2 b_jb_{M+2}/(a_{M+2}a_{M+3}))_{|l|}}\biggr|<D,
\end{align}
where $D$ is a positive constant independent from $a_{M+3}$ and $l$.
According to the multiple ratio test, the series
\begin{align}
	\sum_{l\in\mathbb{Z}_{\geq0}^M}\left|\left(\frac{a_{M+1}}{b_{M+2}}\right)^{|l|}\frac{\Delta (\{a_iq^{l_i}\}_{1\leq i\leq M})}{\Delta(\{a_i\}_{1\leq i\leq M})}\frac{(qb_{M+2}/a_{M+2})_{|l|}}{ (q^{\lambda+2}b_{M+2}/a_{M+2})_{|l|}}\prod_{i=1}^M\left(\frac{\prod_{j=1}^{M+1}(a_i/b_j)_{l_i}}{\prod_{j=1}^M(qa_i/a_j)_{l_i} \cdot(qa_i/a_{M+2})_{l_i}}\right)\right|,
\end{align}
converges if $|a_{M+1}/b_{M+2}|<1$.
Therefore the limit \eqref{just7} is justified by Tannery's theorem.

\subsection{Justification of the limit \eqref{just8}: $a_{M+2}\to0$}
We put
\begin{align}
	\notag&p_l=\left(-\frac{q}{a_{M+1}}\right)^{|l|}q^{\binom{|l|}{2}}\frac{\Delta (\{a_iq^{l_i}\}_{1\leq i\leq M})}{\Delta(\{a_i\}_{1\leq i\leq M})}\frac{(q^{\lambda+1})_{|l|}}{\prod_{j=1}^{M+2}( q^{\lambda+2} b_j/a_{M+1})_{|l|}}\\
	\notag&\phantom{=}\times\prod_{i=1}^M\left(\frac{1-(q^{\lambda+1} /a_{M+1})a_iq^{|l|+l_i}}{1-(q^{\lambda+1} /a_{M+1})a_i}\frac{(q^{\lambda+1} a_i/a_{M+1})_{|l|}\prod_{j=1}^{M+1}(a_i/b_j)_{l_i}}{\prod_{j=1}^{M+1}(qa_i/a_j)_{l_i} }\right)\\
	&\phantom{=}\times a_{M+2}^{|l|} \prod_{i=1}^M(a_iq^{-\beta}/a_{M+2})_{l_i}.
\end{align}
Due to \eqref{ine2.5}, there exists a positive constant $D$ independent from $a_{M+2}$ and $l$ such that
\begin{align}
	\left|a_{M+2}^{|l|} \prod_{i=1}^M(a_iq^{-\beta}/a_{M+2})_{l_i}\right|<D \left|\prod_{i=1}^M \left((a_i/q^{\beta})^{l_i}q^{\binom{l_i}{2}}\right)\right|.
\end{align}
By the multiple ratio test, we find that the series
\begin{align}
	\notag&\sum_{l\in\mathbb{Z}_{\geq0}^M}\biggl|\left(-\frac{q}{a_{M+1}}\right)^{|l|}q^{\binom{|l|}{2}}\frac{\Delta (\{a_iq^{l_i}\}_{1\leq i\leq M})}{\Delta(\{a_i\}_{1\leq i\leq M})}\frac{(q^{\lambda+1})_{|l|}}{\prod_{j=1}^{M+2}( q^{\lambda+2} b_j/a_{M+1})_{|l|}}\\
	\notag&\phantom{=}\times\prod_{i=1}^M\left(\frac{1-(q^{\lambda+1} /a_{M+1})a_iq^{|l|+l_i}}{1-(q^{\lambda+1} /a_{M+1})a_i}\frac{(q^{\lambda+1} a_i/a_{M+1})_{|l|}\prod_{j=1}^{M+1}(a_i/b_j)_{l_i}}{\prod_{j=1}^{M+1}(qa_i/a_j)_{l_i} }\right)\biggr|\\
	&\phantom{=}\times \left|\prod_{i=1}^M \left((a_i/q^{\beta})^{l_i}q^{\binom{l_i}{2}}\right)\right|,
\end{align}
converges for any $a_i$, $b_i$, $\lambda$ and $\beta$.
More precisely, the factors $q^{\binom{|l|}{2}}$ and $q^{\binom{l_m}{2}}$ satisfy
\begin{align}
	&\left|\frac{q^{\binom{\epsilon l_1+\cdots+\epsilon l_{m-1}+\epsilon l_{m}+1+\epsilon l_{m+1}+\cdots+\epsilon l_M}{2}}}{q^{\binom{\epsilon l_1+\cdots+\epsilon l_M}{2}}}\right|=|q^{\epsilon l_1+\cdots+\epsilon l_M}|\xrightarrow{\epsilon\to\infty}0,\\
	&\left|\frac{q^{\binom{\epsilon l_m+1}{2}}}{q^{\binom{\epsilon l_m}{2}}}\right|=|q^{\epsilon l_m}|\xrightarrow{\epsilon\to\infty}0.
\end{align}
Then the above series converges for any $a_i$, $b_i$, $\lambda$ and $\beta$.
Therefore the limit \eqref{just8} is justified by Tannery's theorem.

\subsection{Justification of the limits \eqref{just9} and \eqref{just10}: $a_{M+2}\to0$}

We put
\begin{align}
	p_l=\left(\frac{a_{M+1}}{b_{M+2}}\right)^{|l|}\frac{\Delta (\{a_iq^{l_i}\}_{1\leq i\leq M})}{\Delta(\{a_i\}_{1\leq i\leq M})}\frac{(qb_{M+2}/a_{M+2})_{|l|}}{(q^{\lambda+2} b_{M+2}/a_{M+2})_{|l|}}\prod_{i=1}^M\left(\frac{\prod_{j=1}^{M+1}(a_i/b_j)_{l_i}}{\prod_{j=1}^M(qa_i/a_j)_{l_i} \cdot(qa_i/a_{M+2})_{l_i}}\right).
\end{align}
First we check the limit \eqref{just9}.
We have
\begin{align}
	\notag&(s(a_{M+1},a_{M+2}).p_l)|_{b_{M+2}=q^\beta a_{M+2}}\\
	&=\left(q^{-\beta}\right)^{|l|}\frac{\Delta (\{a_iq^{l_i}\}_{1\leq i\leq M})}{\Delta(\{a_i\}_{1\leq i\leq M})}\prod_{i=1}^M\left(\frac{\prod_{j=1}^{M+1}(a_i/b_j)_{l_i}}{\prod_{j=1}^{M+1}(qa_i/a_j)_{l_i}}\right)\times\frac{(q^{\beta+1}a_{M+2}/a_{M+1})_{|l|}}{(q^{\lambda+\beta+2} a_{M+2}/a_{M+1})_{|l|}}.
\end{align}
Due to \eqref{ine1}, we have 
\begin{align}
	\left|\frac{(q^{\beta+1}a_{M+2}/a_{M+1})_{|l|}}{(q^{\lambda+\beta+2} a_{M+2}/a_{M+1})_{|l|}}\right|<D,
\end{align}
where $D$ is a positive constant independent from $a_{M+2}$ and $l$.
According to the multiple ratio test, the series
\begin{align}
	\sum_{l\in\mathbb{Z}_{\geq0}^M}\left|\left(q^{-\beta}\right)^{|l|}\frac{\Delta (\{a_iq^{l_i}\}_{1\leq i\leq M})}{\Delta(\{a_i\}_{1\leq i\leq M})}\prod_{i=1}^M\left(\frac{\prod_{j=1}^{M+1}(a_i/b_j)_{l_i}}{\prod_{j=1}^{M+1}(qa_i/a_j)_{l_i}}\right)\right|,
\end{align}
converges if $|q^{-\beta}|<1$.
Therefore the limit \eqref{just9} is justified by Tannery's theorem.

Next, we check the limit \eqref{just10}.
We have
\begin{align}
	\notag&(s(a_{M+1},a_{M+2}).s(b_{M+1},b_{M+2}).p_l)|_{b_{M+2}=q^\beta a_{M+2}}\\
	&=\left(\frac{1}{b_{M+1}}\right)^{|l|}\frac{\Delta (\{a_iq^{l_i}\}_{1\leq i\leq M})}{\Delta(\{a_i\}_{1\leq i\leq M})}\frac{(qb_{M+1}/a_{M+1})_{|l|}}{(q^{\lambda+2} b_{M+1}/a_{M+1})_{|l|}}\prod_{i=1}^M\left(\frac{\prod_{j=1}^{M}(a_i/b_j)_{l_i}}{\prod_{j=1}^{M+1}(qa_i/a_j)_{l_i}}\right)\times a_{M+2}^{|l|}\prod_{i=1}^M (a_i q^{-\beta}/a_{M+2})_{l_i}.
\end{align}
Due to \eqref{ine2.5}, there exists a positive constant $D$ independent from $a_{M+2}$ and $l$ such that
\begin{align}
	\left|a_{M+2}^{|l|}\prod_{i=1}^M (a_i q^{-\beta}/a_{M+2})_{l_i}\right|<D \left|\prod_{i=1}^M\left(\frac{a_i}{q^\beta}\right)^{l_i}q^{\binom{l_i}{2}}\right|.
\end{align}
Thanks to the factor $q^{\binom{l_i}{2}}$, the series
\begin{align}
	\sum_{l\in\mathbb{Z}_{\geq0}^M}\left|\left(\frac{1}{b_{M+1}}\right)^{|l|}\frac{\Delta (\{a_iq^{l_i}\}_{1\leq i\leq M})}{\Delta(\{a_i\}_{1\leq i\leq M})}\frac{(qb_{M+1}/a_{M+1})_{|l|}}{(q^{\lambda+2} b_{M+1}/a_{M+1})_{|l|}}\prod_{i=1}^M\left(\frac{\prod_{j=1}^{M}(a_i/b_j)_{l_i}}{\prod_{j=1}^{M+1}(qa_i/a_j)_{l_i}}\right)\times\prod_{i=1}^M\left(\frac{a_i}{q^\beta}\right)^{l_i}q^{\binom{l_i}{2}} \right|,
\end{align}
converges for any $a_i$, $b_i$, $\lambda$ and $\beta$.
Therefore the limit \eqref{just10} is justified by Tannery's theorem.

\section*{Acknowledgement}
The author would like to thank Yasuhiko Yamada for valuable suggestions, helpful discussions and constant encouragements.
He also thank the referees for careful reading the manuscript and providing useful references and comments.
This work is supported by JST SPRING, Grant Number JPMJSP2148 and JSPS KAKENHI Grant Number 22H01116.

\end{document}